\documentclass[10pt,a4paper]{article}
\usepackage{xcolor}
\usepackage{comment}
\usepackage[utf8]{inputenc}
\usepackage[T1]{fontenc}
\usepackage[english]{babel}
\usepackage{lmodern}
\usepackage{microtype}
\usepackage{amsmath, amssymb, amsthm, mathtools, dsfont}
\usepackage[margin=1.5cm]{geometry}
\usepackage{xcolor}
\usepackage{enumitem}
\usepackage{hyperref} 

\numberwithin{equation}{section}

\theoremstyle{plain}
\newtheorem{theorem}{Theorem}[section]
\newtheorem{lemma}[theorem]{Lemma}
\newtheorem{proposition}[theorem]{Proposition}
\newtheorem{corollary}[theorem]{Corollary}

\theoremstyle{definition}
\newtheorem{definition}[theorem]{Definition}
\newtheorem{assumption}[theorem]{Assumption}

\theoremstyle{remark}
\newtheorem{remark}[theorem]{Remark}

\newcommand{\R}{\mathbb{R}}
\newcommand{\E}{\mathbb{E}}
\newcommand{\F}{\mathbb{F}}
\newcommand{\Prob}{\mathbb{P}}
\newcommand{\ind}{\mathds{1}}
\DeclareMathOperator{\Tr}{Tr}

\begin{document}

\title{Continuous Differentiability of the Value Function for Infinite-Dimensional Finite-Horizon Optimal Stopping and Related Variational Inequalities}
\author{Gabriele Bolli \footnote {Department of Mathematics "Guido Castelnuovo", Sapienza University of Rome, Rome, Italy. Email: gabriele.bolli@uniroma1.it.} \quad Giorgio Ferrari\footnote{Bielefeld University, Center for Mathematical Economics (IMW), Bielefeld , Germany. Email: giorgio.ferrari@uni-bielefeld.de}}
\date{\today}
\maketitle

\begin{abstract}
This paper studies finite-horizon optimal stopping problems for semilinear stochastic evolution equations in real, separable Hilbert spaces, together with their associated parabolic variational inequalities. We prove continuous differentiability of the value function in infinite dimensions, thereby obtaining a smooth-fit principle for the corresponding optimal stopping problem. The analysis has two parts. First, we prove existence and uniqueness, in a suitable weighted class, of a mild solution to the variational inequality and identify it with the optimal stopping value function. We also establish local spatial Lipschitz continuity of the value function by probabilistic methods, without requiring any smoothing property of the underlying transition semigroup. Second, under a global regularizing assumption on this semigroup, we prove higher-order spatial regularity: the value function is continuously Fr\'echet differentiable and its gradient is locally $\beta$-H\"older continuous. The abstract results are then applied to a stochastic heat equation with additive noise.
\end{abstract}

\textbf{Keywords}:
Optimal stopping, variational inequalities, infinite dimensional analysis, mild solutions, penalization methods, continuous differentiability, smooth-fit property.

\smallskip \noindent

\textbf{AMS classification}:
60G40, 49J40, 60H15, 35R15, 93E20.

\smallskip \noindent

\textbf{Acknowledgements}:
We thank Fausto Gozzi for comments and inspiring discussions. This work has been developed while Gabriele Bolli was visiting the Center for Mathematical Economics (IMW) at Bielefeld University. The hospitality of IMW is greatly acknowledged by the first named author. 

\section{Introduction}
Optimal stopping problems for stochastic evolution equations arise naturally when the underlying Markov state takes values in an infinite-dimensional space, for instance when it describes the spatial profile of a solution to a stochastic partial differential equation. In this Hilbert-space setting, the classical link between optimal stopping, free-boundary problems, and variational inequalities becomes substantially more delicate. The absence of a canonical Lebesgue measure on Hilbert spaces, the possible unboundedness of the data, and the limited smoothing produced by the transition semigroup prevent a direct use of finite-dimensional arguments. This paper studies these issues for semilinear stochastic dynamics on a real, separable Hilbert space and proves continuous differentiability of the associated value function.

Our contribution is twofold. First, under polynomial growth and local Lipschitz assumptions on the costs and the obstacle, we construct a mild solution of the parabolic variational inequality and prove uniqueness in a weighted class. The solution is obtained as the pointwise limit of penalized equations and is identified with the value function of the optimal stopping problem. In addition, a probabilistic argument yields local spatial Lipschitz continuity without imposing any smoothing condition on the state dynamics. Second, when the transition semigroup enjoys a global regularizing property, we derive higher-order spatial estimates for the penalized equations and pass them to the limit. This proves that the value function is continuously Fréchet differentiable and that its gradient is locally $\beta$-Hölder continuous. In optimal stopping terms, this regularity provides the corresponding smooth-fit property in infinite dimensions.

\paragraph{Our Framework.}
Let $(\Omega, \mathcal{F}, \mathbb{F} = \{\mathcal{F}_s\}_{s\geq0}, \mathbb{P})$ be a filtered probability space satisfying the usual conditions and carrying a cylindrical Wiener process $W$ with values in a separable Hilbert space $\Xi$. The state process evolves in a real, separable Hilbert space $H$ and solves, for $s \in (t,T]$, the semilinear stochastic evolution equation
\begin{equation*}
\mathrm{d}X(s) = [A X(s) + b(s, X(s))] \mathrm{d}s + \sigma(s, X(s)) \mathrm{d}W(s),
\end{equation*}
with initial condition $X(t)=x\in H$. The operator $A$ generates a strongly continuous semigroup, while the drift $b$ and the diffusion $\sigma$ satisfy the usual spatial Lipschitz and growth assumptions ensuring well-posedness of the stochastic flow.

For an initial state $(t,x) \in [0,T] \times H$, admissible stopping rules are the $\mathbb{F}$-stopping times in $\mathcal{T}_{t,T}$, taking values in $[t,T]$. Given a running cost $f$, a dynamic obstacle $\Phi$, and a terminal payoff $\Psi$, the cost associated with $\tau\in\mathcal{T}_{t,T}$ is
\begin{equation*}
J(t, x; \tau) := \mathbb{E} \left[ \int_t^\tau f(s, X(s)) , \mathrm{d}s + \Phi(\tau, X(\tau))\mathds{1}_{{\tau < T}} + \Psi(X(T))\mathds{1}_{{\tau = T}} \right].
\end{equation*}
The data are allowed to have polynomial growth and are assumed to be locally Lipschitz continuous in the spatial variable. The value function is therefore
\begin{equation*}
v(t, x) := \inf_{\tau \in \mathcal{T}_{t, T}} J(t, x; \tau).
\end{equation*}
The dynamic programming principle suggests that, whenever sufficient regularity is available, the value function should solve the parabolic variational inequality
\begin{equation*}
\max \left( -\partial_t V(t, x) - \mathcal{L} V(t, x) - f(t, x) , V(t, x) - \Phi(t, x) \right) = 0,
\end{equation*}
where $\mathcal{L}$ is the differential operator associated with the dynamics. Since classical differentiability is not available at the outset, the first part of the paper develops a mild formulation of this variational inequality and proves that its exact mild solution coincides with $v$.

\paragraph{Literature Overview.}The infinite-dimensional theory of optimal stopping and variational inequalities has been developed through several complementary approaches. In the viscosity framework, parabolic optimal stopping for Hilbert space-valued diffusions is studied in \cite{GaSw99}, where the value function is characterized as the unique viscosity solution of the corresponding variational inequality. This method accommodates irregular obstacles, but it does not provide the continuous differentiability of the value function that is needed to formulate smooth fit. A smooth-fit result in an elliptic infinite-dimensional setting is obtained in \cite{FeFeRiRo26}, under structural assumptions including a constant obstacle and a $\mathcal{C}^1$, semiconcave running cost with bounded gradient. Functional-analytic formulations, such as those developed in \cite{ChMe89} and \cite{BaSr06}, treat variational inequalities within $L^2$ spaces with respect to a given reference Gaussian measure. While successfully extending classical methods, they yield weak solutions defined only almost everywhere with respect to the reference measure, thereby precluding a comprehensive analysis of their classical spatial regularity. In \cite{ChDeA16}, Yosida approximations and finite-dimensional Galerkin projections are combined to construct approximating stopping problems and prove convergence to the value function solving the infinite-dimensional variational inequality, although spatial regularity is not addressed. A similar approximation procedure is used in \cite{MaMi08}, to characterize the value function of the optimal stopping problem as the solution to an infinite-dimensional variational inequality; however, the subsequent analysis focuses primarily on numerical approximations, leaving the spatial regularity of the solution unaddressed. Strong and mild solutions to variational inequalities and Bellman differential inclusions are studied in \cite{BaMa08} and \cite{Za01} by means of excessive measures associated with a transition semigroup; in that setting, the link between the variational inequality and the value function is obtained, again, almost surely with respect to the reference measure, rather than pointwise. From a purely topological perspective, optimal stopping for general Markov processes on Polish spaces is investigated in \cite{Za97} using time-discretization. Although this framework establishes the continuity of the value function with remarkable generality, the absence of a linear and differential structure in the underlying metric space precludes the derivation of higher-order spatial regularity. A probabilistic approach based on reflected backward stochastic differential equations is developed in \cite{FuMaTe17}. There, with a notion of mild solution close to the one used here, the value function is identified as the unique mild solution and local Lipschitz continuity is obtained, but higher-order differentiability remains outside the scope of the analysis.The present paper combines a pointwise penalization method with semigroup regularization estimates. Following the analysis in the finite-dimensional setting developed in \cite[Chapter 3]{BeLi82}, we approximate the variational inequality by penalized equations. Unlike the finite-dimensional Sobolev-space approach of \cite{BeLi82}, the absence of Lebesgue measure in Hilbert spaces leads us to work with mild integral formulations and pointwise weighted estimates. The continuous differentiability result is then obtained by exploiting regularizing properties of the transition semigroup rather than imposing strong differentiability assumptions on all problem data. This identifies a useful trade-off: compared with viscosity and BSDE methods, higher differentiability of the value function is recovered under a global smoothing assumption on the stochastic flow.

\paragraph{Our Methods and Results.}
Inspired by \cite{BeLi82}, we introduce the following family of penalized problems:
\begin{equation*}
\begin{cases}
\partial_t u_\varepsilon(t, x) + \mathcal{L} u_\varepsilon(t, x) + f(t, x) - \frac{1}{\varepsilon} \left( u_\varepsilon(t, x) - \Phi(t, x) \right)^+ = 0, & \forall (t, x) \in [0, T) \times H, \\
u_\varepsilon(T, x) = \Psi(x), & \forall x \in H.
\end{cases}
\end{equation*}
In this equation, the penalty term $\frac{1}{\varepsilon} \left( u_\varepsilon(t, x) - \Phi(t, x) \right)^+$ acts as a relaxation of the constraint $V(t,x)\leq \Phi(t,x)$. When $u_\varepsilon$ is below the obstacle, the dynamics coincide with the unconstrained dynamics of the variational inequality. When $u_\varepsilon$ exceeds $\Phi$, the penalty term forces the solution back towards the admissible region, and this effect becomes stronger as $\varepsilon \to 0$. Thus, the penalized family provides a natural approximation of the variational inequality. In contrast to \cite{BeLi82}, which relies on weak formulations in Sobolev spaces and derives its main estimates through $L^p$-bounds on the penalty term, the lack of an infinite-dimensional analogue of Lebesgue measure requires us to work with mild integral formulations and purely pointwise weighted estimates.

In the first part of the paper, after proving existence and uniqueness of a penalized solution for every $\varepsilon > 0$ (Theorem \ref{thm:existence_uniqueness}), we establish the monotonicity of the solutions $u_\varepsilon$ (Proposition \ref{prop:penalty_monotonicity}), as well as their pointwise boundedness (Theorem \ref{thm:uniform_lower_bound}). Consequently, we deduce that this family admits a pointwise limit, and that this limit coincides with the exact mild solution of the variational inequality (Theorem \ref{thm:convergence_exact}). Furthermore, we prove that this solution is locally Lipschitz continuous (Corollary \ref{cor:value_function_lipschitz}). To obtain estimates independent of the parameter $\varepsilon>0$, we identify the penalized solution with the value of an auxiliary minimization problem (Theorem \ref{thm:feynman_kac}), in analogy with the finite-dimensional argument in \cite[Chapter 3, Section 4, Theorem 4.4]{BeLi82}. The corresponding cost functional is
\begin{equation*}
J_\varepsilon(t, x; \gamma) := \mathbb{E} \left[ \int_t^T e^{-\int_t^s \gamma_r \mathrm{d}r} \Big( f(s, X(s)) + \gamma_s \Phi(s, X(s)) \Big) \mathrm{d}s + e^{-\int_t^T \gamma_r \mathrm{d}r} \Psi(X(T)) \right],
\end{equation*}
where $\gamma$ is a progressively measurable process taking values in $[0,1/\varepsilon]$. The final step of this first part consists in identifying the mild solution of the variational inequality with the value function of the optimal stopping problem and in deriving an optimal stopping time (Theorem \ref{thm:optimal_strategy}). To this end, the mild structure of the penalized equations is fundamental: the action of the transition semigroup provides the tools to construct suitable martingales, used tyo identify the optimal stopping time as the first entry time of the state process into the stopping region, where the value function coincides with the obstacle.

In the second part of the work, we focus on the spatial regularity of the value function. We assume that the transition semigroup has a quantitative smoothing property: for every continuous function $\phi$ with polynomial growth, the mapping $P_{t,s}[\phi]$ is twice Fréchet differentiable, with first- and second-order derivatives controlled by singular powers of $s-t$ (Assumption \ref{ass:smoothing}). The order of the singularity determines the admissible range of Hölder exponents $\beta$ for which the temporal convolutions in the mild formulation yield spatial $\mathcal{C}^{1,\beta}_{\mathrm{loc}}$-regularity. Exploiting this regularizing mechanism, we first establish spatial gradient estimates for the penalized solutions $u_\varepsilon$ (Lemma \ref{lem:global_reg_approx}). Then, by ensuring that these bounds are uniform with respect to the penalization parameter $\varepsilon$, we can rigorously pass to the limit, proving that the value function of the optimal stopping problem is continuously Fréchet differentiable with a locally $\beta$-Hölder continuous gradient (Theorem \ref{thm:frechet_ident}). In optimal stopping terms, this yields a smooth-fit principle in infinite dimensions.

A central technical point in this passage to the limit is the control of the penalty term
$
\Pi_\varepsilon(t,x):=\frac{1}{\varepsilon}\big(u_\varepsilon(t,x)-\Phi(t,x)\big)^+ .
$
To prove the convergence of the penalized equations to the exact mild solution of the variational inequality, to identify this limit with the optimal stopping value through a hitting-time verification argument, and to transfer the spatial regularity estimates from the penalized problems to the limiting value function, one needs a bound on $\Pi_\varepsilon$ which is uniform in $\varepsilon$. We isolate this requirement in the notion of an admissible obstacle (Definition \ref{def:admissible_obstacle}). This condition is not meant as an additional modelling assumption on the stopping problem, but rather as the precise compatibility requirement between the obstacle, the dynamics, and the penalization procedure. Section \ref{sec:admissible_obstacles} provides checkable sufficient conditions for admissibility, including regular obstacles, quadratic-type obstacles, and cylindrical concave obstacles.

Finally, in Section \ref{sec:example_heat}, we apply the abstract theory to a stochastic heat equation. This example shows that the smoothing assumption used in the regularity analysis is compatible with a genuine infinite-dimensional SPDE driven by suitably chosen trace-class noise.

\section{Abstract Framework}
In this section we introduce the functional and probabilistic framework required to analyze optimal stopping problems in infinite-dimensional spaces. By combining the framework of weighted functional spaces with the analysis of semilinear stochastic differential equations, we establish the rigorous foundation necessary for the subsequent investigation of the value function's characterization and regularity.

\subsection{Functional Setting and Weighted Topology}
\label{sec:functional_setting}

We define the functional framework for the finite-horizon optimal stopping problem. The state space is endowed with a weighted topology to control unbounded variables and admit problem data with polynomial growth.

\begin{definition}[Base Hilbert Spaces]
\label{def:base_spaces}
Let $(H, \langle \cdot, \cdot \rangle_H, \|\cdot\|_H)$ and $(\Xi, \langle \cdot, \cdot \rangle_\Xi, \|\cdot\|_\Xi)$ be real, separable Hilbert spaces equipped with their respective Borel $\sigma$-algebras $\mathcal{B}(H)$ and $\mathcal{B}(\Xi)$. We refer to $H$ as the \emph{state space} and $\Xi$ as the \emph{noise space}.
\end{definition}

\begin{definition}[Weighted Polynomial Topologies]
\label{def:weighted_topology}
Let $m \ge 1$. We define the continuous \emph{spatial weight function} $w_m \colon H \to \mathbb{R}$ by
\begin{equation} 
\label{eq:weight_function}
    w_m(x) := 1 + \|x\|_H^m, \quad \forall x \in H.
\end{equation}
Associated with this weight, we introduce the following functional spaces:
\begin{enumerate}[label={(\textit{\roman*})}, leftmargin=*]
    \item The \emph{measurable weighted space} $\mathcal{B}_m(H)$ consists of all Borel-measurable mappings $\varphi \colon H \to \mathbb{R}$ such that the following norm is finite:
    \begin{equation}
        \|\varphi\|_{\mathcal{B}_m} := \sup_{x \in H} \frac{|\varphi(x)|}{w_m(x)}.
    \end{equation}

    \item The \emph{continuous weighted space} $\mathcal{C}_m(H)$ is the subspace of $\mathcal{B}_m(H)$ consisting of mappings $\varphi$ that are continuous on $H$. It is equipped with the same norm $\|\cdot\|_{\mathcal{B}_{m}}$.
    
    \item The \emph{parabolic measurable weighted space} $\mathcal{B}_m([0, T] \times H)$ consists of all Borel-measurable mappings $\psi \colon [0, T] \times H \to \mathbb{R}$ such that the following norm is finite:
    \begin{equation}
        \|\psi\|_{\mathcal{B}_{m, T}} := \sup_{(t, x) \in [0, T] \times H} \frac{|\psi(t, x)|}{w_m(x)}.
    \end{equation}

    \item The \emph{parabolic continuous weighted space} $\mathcal{C}_m([0, T] \times H)$ is the subspace of $\mathcal{B}_m([0, T] \times H)$ consisting of mappings $\psi$ that are jointly continuous on $[0, T] \times H$. It is equipped with the same norm $\|\cdot\|_{\mathcal{B}_{m, T}}$.
    
    \item The \emph{spatial weighted local Lipschitz space} $\mathrm{Lip}_m(H)$ is the subspace of $\mathcal{C}_m(H)$ consisting of mappings $\varphi \colon H \to \R$ for which there exists a constant $K > 0$ such that
    \begin{equation}
        |\varphi(x) - \varphi(y)| \le K \big(1 + \|x\|_H^{m-1} + \|y\|_H^{m-1}\big) \|x - y\|_H
    \end{equation}
    for all $x, y \in H$.
  
    \item The \emph{weighted local Lipschitz space} $\mathrm{Lip}_{m}([0, T] \times H)$ is the subspace of $\mathcal{C}_m([0, T] \times H)$ consisting of mappings $\psi$ for which there exists a constant $K > 0$ such that
    \begin{equation} 
        |\psi(t, x) - \psi(t, y)| \le K \big(1 + \|x\|_H^{m-1} + \|y\|_H^{m-1}\big) \|x - y\|_H
    \end{equation}
    for all $x, y \in H$ and $t \in [0, T]$. 
\end{enumerate}
\end{definition}

The following Lemma ensures that the parabolic measurable weighted space $\mathcal{B}_m([0, T] \times H)$ is a Banach space.

\begin{lemma}[Completeness of the Parabolic Measurable Weighted Space]
\label{lem:completeness_weighted_space}
The space $\mathcal{B}_m([0, T] \times H)$ endowed with the norm $\|\cdot\|_{\mathcal{B}_{m, T}}$ is a Banach space.
\end{lemma}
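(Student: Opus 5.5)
The plan is the standard argument for weighted sup-norm spaces. First I will check that $\|\cdot\|_{\mathcal{B}_{m,T}}$ is a norm on $\mathcal{B}_m([0,T]\times H)$. Since $w_m \ge 1 > 0$, the quantity $\|\psi\|_{\mathcal{B}_{m,T}} = \sup |\psi|/w_m$ vanishes only if $\psi \equiv 0$. Homogeneity and the triangle inequality follow pointwise from those of $|\cdot|$ after taking suprema. The space is a vector space because sums and scalar multiples of Borel functions are Borel, and their weighted norms are controlled by the triangle inequality.

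For completeness, I take a Cauchy sequence $(\psi_n)$. For each fixed $(t,x)$,
\[
|\psi_n(t,x)-\psi_k(t,x)| \le w_m(x)\,\|\psi_n-\psi_k\|_{\mathcal{B}_{m,T}},
\]
so $(\psi_n(t,x))_n$ is Cauchy in $\R$. I define $\psi(t,x)$ as its limit. Then $\psi$ is Borel measurable on $[0,T]\times H$ as a pointwise limit of Borel functions.

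Next, given $\eta>0$, I choose $N$ such that $\|\psi_n-\psi_k\|_{\mathcal{B}_{m,T}}\le\eta$ for all $n,k\ge N$. Letting $k\to\infty$ pointwise gives $|\psi_n(t,x)-\psi(t,x)|/w_m(x)\le\eta$ for every $(t,x)$, and hence $\|\psi_n-\psi\|_{\mathcal{B}_{m,T}}\le\eta$. Finiteness of the norm of $\psi$ follows from $\|\psi\|\le\|\psi_N\|+\eta<\infty$, so $\psi\in\mathcal{B}_m([0,T]\times H)$ and $\psi_n\to\psi$ in norm.

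There is no real obstacle here. The only points that need care are using $w_m\ge1$ to pass from norm convergence to pointwise Cauchyness, and keeping the $\eta$-bound uniform in $(t,x)$ when passing to the limit $k\to\infty$.
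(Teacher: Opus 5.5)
Your proposal is correct and follows essentially the same argument as the paper: pointwise Cauchyness from the weighted bound, a pointwise Borel limit, letting $k\to\infty$ in the uniform Cauchy estimate, and the triangle inequality for finiteness of $\|\psi\|_{\mathcal{B}_{m,T}}$. You additionally check the norm axioms, which the paper leaves implicit; the only small correction is that pointwise Cauchyness uses the finiteness of $w_m(x)$, while $w_m\ge 1$ is what makes the norm definite.
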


\begin{proof}
Let $\{\psi_n\}_{n \in \mathbb{N}}$ be a Cauchy sequence in $\mathcal{B}_m([0, T] \times H)$. For any $\varepsilon > 0$, there exists $N \in \mathbb{N}$ such that: 
$$|\psi_n(t, x) - \psi_k(t, x)| \le \varepsilon w_m(x)$$ 
for all $n, k \ge N$ and $(t,x)\in [0,T]\times H$. 
Since the weight $w_m(x)$ is finite for every $x \in H$, for any fixed point $(t,x)$, the sequence of real numbers $\{\psi_n(t,x)\}_{n \in \mathbb{N}}$ is a Cauchy sequence in $\mathbb{R}$. By the completeness of $\mathbb{R}$, the sequence converges pointwise to a limit function $\psi(t,x) := \lim_{n \to \infty} \psi_n(t,x)$. Since each $\psi_n$ is a Borel-measurable mapping, their pointwise limit $\psi$ is automatically Borel-measurable on $[0, T] \times H$. Taking the limit as $k \to \infty$ in the initial Cauchy bound yields: 
$$|\psi_n(t, x) - \psi(t, x)| \le \varepsilon w_m(x)$$ 
for all $n \ge N$ and $(t,x)\in [0,T]\times H$. Taking the supremum over $[0, T] \times H$ proves $\|\psi_n - \psi\|_{\mathcal{B}_{m, T}} \to 0$. Finally, the triangle inequality $\|\psi\|_{\mathcal{B}_{m,T}} \le \|\psi - \psi_N\|_{\mathcal{B}_{m,T}} + \|\psi_N\|_{\mathcal{B}_{m,T}} < \infty$ ensures that the limit $\psi$ preserves the weighted polynomial growth, proving that $\psi \in \mathcal{B}_m([0, T] \times H)$.
\end{proof}

\begin{lemma}[Closedness of the Continuous Weighted Subspace]
\label{lem:closedness_continuous_space}
The parabolic continuous weighted space $\mathcal{C}_m([0, T] \times H)$ is a closed subspace of $\mathcal{B}_m([0, T] \times H)$. Consequently, endowed with the norm $\|\cdot\|_{\mathcal{B}_{m, T}}$, it is a Banach space.
\end{lemma}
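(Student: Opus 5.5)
Since Lemma \ref{lem:completeness_weighted_space} already shows that $\mathcal{B}_m([0,T]\times H)$ is a Banach space, and a closed subspace of a Banach space is itself complete, it is enough to prove that $\mathcal{C}_m([0,T]\times H)$ is closed in the norm $\|\cdot\|_{\mathcal{B}_{m,T}}$. Linearity of the subspace is immediate, because sums and scalar multiples of jointly continuous functions are jointly continuous. So we take a sequence $\{\psi_n\}\subset\mathcal{C}_m([0,T]\times H)$ with $\|\psi_n-\psi\|_{\mathcal{B}_{m,T}}\to0$ for some $\psi\in\mathcal{B}_m([0,T]\times H)$, and we must show that $\psi$ is jointly continuous.

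The approach is the standard uniform-limit argument, carried out on bounded sets. Fix $R>0$ and let $D_R:=[0,T]\times\{x\in H:\|x\|_H\le R\}$. On $D_R$ we have $w_m(x)\le 1+R^m$, so
\begin{equation*}
\sup_{(t,x)\in D_R}|\psi_n(t,x)-\psi(t,x)|\le (1+R^m)\,\|\psi_n-\psi\|_{\mathcal{B}_{m,T}}\longrightarrow 0 .
\end{equation*}
Hence $\psi_n\to\psi$ uniformly on $D_R$.

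Next we check continuity at an arbitrary point $(t_0,x_0)$. Choose $R=\|x_0\|_H+1$. Then every point $(t,x)$ close enough to $(t_0,x_0)$ satisfies $\|x-x_0\|_H<1$, so it lies in $D_R$. For such points we use the three-term split
\begin{equation*}
|\psi(t,x)-\psi(t_0,x_0)|\le |\psi(t,x)-\psi_n(t,x)|+|\psi_n(t,x)-\psi_n(t_0,x_0)|+|\psi_n(t_0,x_0)-\psi(t_0,x_0)| .
\end{equation*}
First we fix $n$ so large that the outer two terms are each below $\eta/3$, which the uniform convergence on $D_R$ allows. Then we use the joint continuity of $\psi_n$ to choose a neighbourhood of $(t_0,x_0)$ on which the middle term is below $\eta/3$. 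This gives continuity of $\psi$ at $(t_0,x_0)$.

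There is no serious obstacle here. The only point needing care is that convergence in the weighted norm is not uniform on all of $H$, because the weight $w_m$ is unbounded. The argument works because it only needs uniform convergence on a neighbourhood of each point, and the weight is bounded on bounded sets. The fact that $\psi\in\mathcal{B}_m$, that is, measurability and weighted growth, is already supplied by Lemma \ref{lem:completeness_weighted_space}.
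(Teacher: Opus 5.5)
Your proposal is correct and follows essentially the same route as the paper: weighted-norm convergence is turned into uniform convergence on the bounded cylinders $[0,T]\times B_R$ (using $w_m\le 1+R^m$ there), and continuity of the uniform limit is then localized to a neighbourhood of each point. Your explicit choice $R=\|x_0\|_H+1$, which places a whole open neighbourhood of $(t_0,x_0)$ inside $D_R$, together with the $\eta/3$ argument, makes precise the local-to-global step that the paper only sketches.
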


\begin{proof}
Let $\{\psi_n\}_{n \in \mathbb{N}} \subset \mathcal{C}_m([0, T] \times H)$ be a sequence converging to a limit $\psi \in \mathcal{B}_m([0, T] \times H)$ in the weighted topology, meaning $\|\psi_n - \psi\|_{\mathcal{B}_{m, T}} \to 0$ as $n \to \infty$. To prove that $\mathcal{C}_m([0, T] \times H)$ is closed, it suffices to show that the limit mapping $\psi$ is jointly continuous on $[0, T] \times H$. Fix an arbitrary radius $R > 0$ and consider the closed ball $B_R := \{x \in H : \|x\|_H \le R\}$. On the cylindrical domain $[0, T] \times B_R$, the spatial weight function is strictly bounded by a constant:
$$w_m(x) = 1 + \|x\|_H^m \le 1 + R^m := C_R, \quad \forall x \in B_R.$$
For any $(t, x) \in [0, T] \times B_R$, we can bound the pointwise difference by extracting the weighted norm:
$$|\psi_n(t, x) - \psi(t, x)| \le \frac{|\psi_n(t, x) - \psi(t, x)|}{w_m(x)} w_m(x) \le \|\psi_n - \psi\|_{\mathcal{B}_{m, T}} C_R.$$
Taking the supremum over $[0, T] \times B_R$ yields:
$$\sup_{(t, x) \in [0, T] \times B_R} |\psi_n(t, x) - \psi(t, x)| \le C_R \|\psi_n - \psi\|_{\mathcal{B}_{m, T}}.$$
As $n \to \infty$, the right-hand side vanishes, proving that the sequence of continuous mappings $\{\psi_n\}$ converges to $\psi$ \emph{uniformly} on the bounded domain $[0, T] \times B_R$. Since the uniform limit of continuous functions is continuous, $\psi$ is jointly continuous on $[0, T] \times B_R$. Because the radius $R > 0$ is arbitrary, every point in $[0, T] \times H$ admits a bounded neighborhood where $\psi$ is continuous. Thus, $\psi$ is jointly continuous on $[0, T] \times H$. By definition, this ensures $\psi \in \mathcal{C}_m([0, T] \times H)$, concluding the proof.
\end{proof}

\subsection{Stochastic Dynamics}
\label{sec:dynamics}
 Throughout the discussion, we fix a filtered probability space $(\Omega, \mathcal{F}, \F = \{\mathcal{F}_s\}_{s \ge 0}, \Prob)$ satisfying the usual conditions (i.e., the filtration $\F$ is right-continuous and $\mathcal{F}_0$ is augmented with all $\Prob$-null sets). Let $W = \{W(s)\}_{s \ge0}$ denote a cylindrical $\F$-Wiener process (see, e.g., \cite[Definition 1.88]{FaGoSw17}) defined on an auxiliary real, separable Hilbert space $\Xi$. We require the diffusion coefficient to take values in $\mathcal{L}_2(\Xi, H)$, the space of Hilbert-Schmidt operators mapping $\Xi$ into $H$. For $t \in [0, T)$ and $x \in H$, the system solves the abstract Cauchy problem:
\begin{equation} 
\label{eq:cauchy_problem}
    \begin{cases}
        \mathrm{d}X(s) = [A X(s) + b(s, X(s))] \mathrm{d}s + \sigma(s, X(s)) \mathrm{d}W(s), \quad s \in (t, T], \\
        X(t) = x \in H.
    \end{cases}
\end{equation}
To ensure the well-posedness of the stochastic evolution, the linear generator $A$ and the measurable coefficients $b$ and $\sigma$ are assumed to satisfy the following assumptions.
\begin{assumption}[Structural Conditions on Operators and Coefficients]
\label{ass:coefficients}
\begin{enumerate}[label={(\textit{\roman*})}, leftmargin=*]
\item[]
    \item \emph{Infinitesimal Generator.} The operator $A \colon \mathrm{Dom}(A) \subset H \to H$ is a closed, densely defined linear operator that generates a strongly continuous ($C_0$) semigroup $\{e^{\tau A}\}_{\tau \ge 0}$ on $H$. \footnote{It is well known (see, e.g., \cite[Chapter1, Theorem 2.2]{Pa83}) that there exist constants $\omega \in \mathbb{R}$ and $M\geq1$ such that $\|e^{\tau A}\|_{\mathcal{L}(H)} \le Me^{\omega \tau}, \quad \forall \tau \ge 0.$}
    
    \item \emph{Lipschitz Continuity.} The mappings $b \colon [0, T] \times H \to H$ and $\sigma \colon [0, T] \times H \to \mathcal{L}_2(\Xi, H)$ are Borel measurable. Moreover, there exists a constant $L > 0$ such that:
    \begin{equation}
        \|b(s, x) - b(s, y)\|_H + \|\sigma(s, x) - \sigma(s, y)\|_{\mathcal{L}_2(\Xi, H)} \le L \|x - y\|_H,
    \end{equation}
    for all $s \in [0, T]$ and all $x, y \in H$.
    
    \item \emph{Linear Growth.} The coefficients exhibit at most linear growth in the spatial variable. That is, there exists a constant $K > 0$ such that:
    \begin{equation}
        \|b(s, x)\|_H + \|\sigma(s, x)\|_{\mathcal{L}_2(\Xi, H)} \le K \big( 1 + \|x\|_H \big),
    \end{equation}
    for all $(s, x) \in [0, T] \times H$.
\end{enumerate}
\end{assumption}
Under these conditions (Assumption \ref{ass:coefficients}), we introduce the notion of mild solution.

\begin{definition}[Mild Solution]
\label{def:mild_solution}
For an initial condition $t \in [0, T)$ and $x \in H$, an $H$-valued, $\F$-progressively measurable process $X(\cdot) \equiv X(\cdot; t, x)$ is a \emph{mild solution} to the abstract Cauchy problem \eqref{eq:cauchy_problem} if it satisfies $\Prob$-a.s. for all $s \in [t, T]$:
\begin{equation} 
\label{eq:mild_solution_eq}
    X(s) = e^{(s-t)A}x + \int_t^s e^{(s-r)A} b(r, X(r)) \, \mathrm{d}r + \int_t^s e^{(s-r)A} \sigma(r, X(r)) \, \mathrm{d}W(r).
\end{equation}
\end{definition}
\begin{remark}
    By Assumption \ref{ass:coefficients} this mild solution exists, is unique up to indistinguishability, and has a modification with $\Prob$-a.s. continuous sample paths, as shown in \cite[Theorem 5.3.1]{DPZa96}. 
\end{remark}
The stochastic flow satisfies the following moment estimates.

\begin{lemma}[Parabolic Moment Estimates] 
\label{lem:moment_estimates}
Let Assumption \ref{ass:coefficients} hold. For any real exponent $m\ge2$, there exists a constant $K_m > 0$ such that the mild solution $X(\cdot; t, x)$ introduced in Definition \ref{def:mild_solution} satisfies the inequality:
\begin{equation} 
\label{eq:moment_estimates_eq}
    \E \left[ \sup_{s \in [t, T]} \|X(s; t, x)\|_H^m \right] \le K_m \left( 1 + \|x\|_H^m \right),
\end{equation}
for all initial conditions $(t, x) \in [0, T] \times H$.
\end{lemma}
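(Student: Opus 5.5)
The plan is to take the $m$-th moment of the supremum in the mild formulation \eqref{eq:mild_solution_eq}, estimate the three terms separately, and close the estimate with Gronwall's lemma. Fix $(t,x)$ and $m\ge 2$. Since $X(\cdot;t,x)$ is not a priori known to have a finite $m$-th moment of the supremum, I will first localize. Set $\tau_N:=\inf\{s\ge t:\|X(s)\|_H\ge N\}\wedge T$ and $\phi_N(s):=\E[\sup_{r\in[t,s]}\|X(r\wedge\tau_N)\|_H^m]$, which is finite because the paths are continuous and stopped at $\tau_N$. By convexity, $\|X(s)\|_H^m\le 3^{m-1}(I_1+I_2+I_3)$, where $I_1,I_2,I_3$ are the $m$-th powers of the norms of the initial, drift and stochastic convolution terms.

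For the deterministic terms, the footnote bound $\|e^{\tau A}\|\le Me^{\omega\tau}$ gives $I_1\le (Me^{|\omega|T})^m\|x\|_H^m$. For the drift, Hölder's inequality in time together with the linear growth in Assumption \ref{ass:coefficients}(iii) yields
$\sup_{s\le u} I_2 \le C\,T^{m-1}\int_t^u (1+\|X(r)\|_H^m)\,\mathrm{d}r$.
Up to the localization, this is bounded by $C\int_t^u(1+\phi_N(r))\,\mathrm{d}r$.

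The main obstacle is the stochastic convolution $\int_t^s e^{(s-r)A}\sigma(r,X(r))\,\mathrm{d}W(r)$. It is not a martingale in $s$, so the Burkholder–Davis–Gundy inequality cannot be applied directly to its supremum. I would handle this with the factorization method of Da Prato–Kwapień–Zabczyk, as in \cite[Proposition 7.3]{DPZa96} or the maximal inequality in \cite{FaGoSw17}. Fix $\alpha\in(1/m,1/2)$, which is possible since $m>2$. Write the convolution as $\frac{\sin\pi\alpha}{\pi}\int_t^s (s-r)^{\alpha-1}e^{(s-r)A}Y_\alpha(r)\,\mathrm{d}r$, where $Y_\alpha(r)=\int_t^r (r-\rho)^{-\alpha}e^{(r-\rho)A}\sigma(\rho,X(\rho))\,\mathrm{d}W(\rho)$. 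Hölder's inequality with the conjugate exponent $m/(m-1)$ is valid because $(\alpha-1)m/(m-1)>-1$, and it gives $\sup_{s\le u}\|\cdot\|^m\le C\int_t^u\|Y_\alpha(r)\|^m\,\mathrm{d}r$. Next, BDG applied at fixed $r$ gives $\E\|Y_\alpha(r)\|^m\le C\,\E\big(\int_t^r(r-\rho)^{-2\alpha}\|\sigma(\rho,X(\rho))\|_{\mathcal L_2}^2\,\mathrm{d}\rho\big)^{m/2}$. Because $2\alpha<1$, Young's convolution inequality (or Hölder) together with linear growth bounds this by $C\int_t^r(1+\E\|X(\rho)\|^m)\,\mathrm{d}\rho$. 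The stopping time $\tau_N$ is inserted by replacing $\sigma(\rho,X(\rho))$ with $\sigma(\rho,X(\rho))\ind_{\rho\le\tau_N}$, which keeps all quantities finite. The case $m=2$ then follows from the case $m=4$ by Jensen's inequality (or one can use a direct Itô argument).

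Collecting the three estimates gives $\phi_N(u)\le C(1+\|x\|_H^m)+C\int_t^u\phi_N(r)\,\mathrm{d}r$, with $C$ depending only on $m,T,M,\omega,K$. Gronwall's lemma then yields $\phi_N(T)\le K_m(1+\|x\|_H^m)$ uniformly in $N$ and $(t,x)$. Since $\tau_N\uparrow T$ by path continuity, letting $N\to\infty$ and applying monotone convergence gives \eqref{eq:moment_estimates_eq}.
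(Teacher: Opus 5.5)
Your argument is correct. It is essentially the standard proof that the paper delegates to Da Prato--Zabczyk: localization (or a fixed point in the space of processes with finite $\E\sup\|\cdot\|_H^m$), the factorization method with $\alpha\in(1/m,1/2)$ for the stochastic convolution, BDG, Gronwall, and the case $m=2$ recovered from $m=4$ by Jensen. One wording fix: pointwise in $r$ you only get $\E\|Y_\alpha(r)\|_H^m\le C\int_t^r (r-\rho)^{-2\alpha}\big(1+\E\|X(\rho)\|_H^m\big)\,\mathrm{d}\rho$, not the same bound without the kernel; since you only use $\int_t^u \E\|Y_\alpha(r)\|_H^m\,\mathrm{d}r$, Fubini together with $\int_\rho^u (r-\rho)^{-2\alpha}\,\mathrm{d}r\le C$ gives exactly the estimate you need.
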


\begin{proof}
    This is a standard result. For a complete proof see, e.g., \cite[Theorem 7.2]{DPZa14}.
\end{proof}
We introduce the transition semigroup associated with the mild solution \eqref{eq:mild_solution_eq} of the abstract Cauchy problem \eqref{eq:cauchy_problem}.

\begin{definition}[Transition semigroup]
\label{def:evolution_operators}
Let $X(\cdot; t, x)$ denote the $H$-valued Markov process corresponding to the unique mild solution of the semilinear stochastic differential equation \eqref{eq:cauchy_problem}. The two-parameter transition semigroup $\{P_{t,s}\}_{0 \le t \le s \le T}$ acting on the measurable weighted space $\mathcal{B}_m(H)$ is defined pointwise by
\begin{equation}\label{eq:transition_semigroup}
    P_{t,s} [\varphi](x) := \mathbb{E} \big[ \varphi(X(s; t, x)) \big], \quad \forall \varphi \in \mathcal{B}_m(H).
\end{equation}
\end{definition}

\begin{remark}
The well-definedness of this expectation, and the fact that $P_{t,s}[\varphi]$ inherits the polynomial growth bound, follows directly from the uniform parabolic moment estimates (Lemma \ref{lem:moment_estimates}).
\end{remark}

\section{The Optimal Stopping Problem and its Penalization}
\noindent In this section we provide a rigorous formulation of the finite horizon optimal stopping problem, the associated value function and the variational inequality within the Hilbertian setting. We employ a penalization framework, approximating the exact mild solution of the variational inequality through a family of well-posed mild integral equations. We refer to \cite[Chapter 3]{BeLi82} for an analogous approach in the finite dimensional case, in the framework of Sobolev spaces.

\subsection{Problem Formulation}
\label{sec:optimal_stopping}

The expected cost functional is determined by a running cost, an early-stopping dynamic obstacle and a distinct terminal payoff. The following growth and regularity conditions are imposed on the problem data.

\begin{assumption}[Structural Regularity and Compatibility of Problem Data] 
\label{ass:problem_data}
\begin{enumerate}[label={(\textit{\roman*})}, leftmargin=*]
    \item[] 
    \item \emph{Regularity:} The running cost $f$ and the dynamic obstacle mapping $\Phi$ belong to the space $Lip_{m}([0, T] \times H)$. The terminal payoff $\Psi$ belongs to the space $Lip_m(H)$. Consequently, there exists a constant $L > 0$ such that, for all $t \in [0, T]$ and $x, y \in H$, the following estimate holds:
    \begin{align}
        |f(t, x) - f(t, y)|+|\Phi(t, x) - \Phi(t, y)| + |\Psi(x) - \Psi(y)| &\le L \big(1 + \|x\|_H^{m-1} + \|y\|_H^{m-1}\big) \|x - y\|_H.
    \end{align}
    \item \emph{Terminal Compatibility:} The terminal payoff is bounded from above by the dynamic obstacle at the terminal horizon $T$. Namely, for all $x \in H$:
    \begin{equation}
        \Psi(x) \le \Phi(T, x).
    \end{equation}
\end{enumerate}
\end{assumption}

We formulate the finite-horizon parabolic optimal stopping problem associated with the semilinear dynamics on the Hilbert space $H$. 

\begin{definition}[Expected Cost and Value Function]
\label{def:value_function}
For a given initial time $t \in [0, T]$, let $\mathcal{T}_{t, T}$ denote the class of all $\F$-stopping times $\tau \colon \Omega \to [t, T]$. The finite-horizon optimal stopping problem is characterized by the following quantities:
\begin{enumerate}[label={(\textit{\roman*})}, leftmargin=*]
    \item \emph{Expected Cost Functional.} For an initial state $x \in H$ and an admissible stopping strategy $\tau \in \mathcal{T}_{t, T}$, the expected cost functional $J \colon [0, T] \times H \times \mathcal{T}_{t, T} \to \R$ is defined by:
    \begin{equation} 
    \label{eq:def_cost}
        J(t, x; \tau) := \E \left[ \int_t^\tau f(s, X(s)) \, \mathrm{d}s + \Phi(\tau, X(\tau))\ind_{\{\tau < T\}} + \Psi(X(T))\ind_{\{\tau = T\}} \right],
    \end{equation}
    where $X(\cdot) \equiv X(\cdot; t, x)$ denotes the continuous mild solution of the governing semilinear state equation \eqref{eq:mild_solution_eq}.
    
    \item \emph{Value Function.} The value function $v \colon [0, T] \times H \to \R$ is defined as the infimum of the expected cost evaluated over the entire class of admissible stopping strategies:
    \begin{equation} 
    \label{eq:def_value}
        v(t, x) := \inf_{\tau \in \mathcal{T}_{t, T}} J(t, x; \tau).
    \end{equation}
\end{enumerate}
\end{definition}

The following Proposition ensures the well-posedness of the cost functional.
\begin{proposition}[Absolute Integrability]
\label{prop:abs_integrability}
Let Assumptions \ref{ass:coefficients} and \ref{ass:problem_data} hold. Then, the expected cost $J(t, x; \tau)$ is absolutely integrable for any initial configuration $(t, x) \in [0, T] \times H$ and any admissible stopping time $\tau \in \mathcal{T}_{t, T}$.
\end{proposition}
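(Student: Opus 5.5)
The plan is to dominate the random variable inside the expectation in \eqref{eq:def_cost} by a single integrable random variable that does not depend on $\tau$, namely a multiple of $1+\sup_{s\in[t,T]}\|X(s;t,x)\|_H^m$, and then invoke Lemma \ref{lem:moment_estimates}.

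The first step is a linear-growth bound on the data derived from Assumption \ref{ass:problem_data}. Since $f,\Phi\in \mathrm{Lip}_m([0,T]\times H)\subset\mathcal{C}_m([0,T]\times H)$ and $\Psi\in\mathrm{Lip}_m(H)\subset \mathcal{C}_m(H)$, their weighted norms are finite. Setting $C:=\|f\|_{\mathcal{B}_{m,T}}+\|\Phi\|_{\mathcal{B}_{m,T}}+\|\Psi\|_{\mathcal{B}_m}<\infty$, we obtain $|f(s,y)|+|\Phi(s,y)|+|\Psi(y)|\le C(1+\|y\|_H^m)$ for all $(s,y)\in[0,T]\times H$. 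Alternatively, the same bound follows from the local Lipschitz estimate with $y=0$, using that $f(\cdot,0)$ and $\Phi(\cdot,0)$ are bounded on $[0,T]$ by the definition of $\mathcal{C}_m$.

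Next comes measurability. Because $X$ has continuous paths and is progressively measurable, and $f$ is Borel, the map $s\mapsto f(s,X(s))$ is progressively measurable, so $\int_t^\tau f(s,X(s))\,\mathrm{d}s$ is a well-defined random variable. By the joint continuity of $\Phi$ and the path continuity of $X$, the terms $\Phi(\tau,X(\tau))$ and $\Psi(X(T))$ are $\mathcal{F}$-measurable.

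Then we estimate pathwise. With $S:=\sup_{s\in[t,T]}\|X(s)\|_H^m$,
\begin{equation*}
\Big|\int_t^\tau f(s,X(s))\,\mathrm{d}s\Big|+|\Phi(\tau,X(\tau))|\ind_{\{\tau<T\}}+|\Psi(X(T))|\ind_{\{\tau=T\}}\le C\,(T+2)\,(1+S).
\end{equation*}
Taking expectations and applying Lemma \ref{lem:moment_estimates} gives a bound of $C(T+2)(1+K_m(1+\|x\|_H^m))<\infty$, uniformly in $\tau\in\mathcal{T}_{t,T}$.

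The main (minor) obstacle is that Lemma \ref{lem:moment_estimates} is stated for $m\ge2$, while Assumption \ref{ass:problem_data} only requires $m\ge1$. If $m<2$, I would use $\|y\|^m\le 1+\|y\|^2$ and apply the lemma with exponent $2$, or apply Jensen's inequality to $\E[S]\le(\E[\sup\|X\|^2])^{m/2}$. In both cases integrability still holds, with a bound of order $1+\|x\|_H^{\max(m,2)}$, or of order $1+\|x\|_H^m$ if the Jensen route is used. The case $t=T$ is trivial, since then $\tau\equiv T$.
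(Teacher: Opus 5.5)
Your proposal is correct and follows essentially the same route as the paper, which simply cites the polynomial growth of the data together with the moment estimate of Lemma \ref{lem:moment_estimates}; your pathwise domination by $C(T+2)\bigl(1+\sup_{s\in[t,T]}\|X(s)\|_H^m\bigr)$, uniform in $\tau$, is exactly the argument the paper leaves implicit. Your treatment of the case $m<2$ (via $\|y\|_H^m\le 1+\|y\|_H^2$ or Jensen's inequality) also covers a detail the paper skips, since Lemma \ref{lem:moment_estimates} is only stated for $m\ge 2$ while the data are allowed any $m\ge 1$.
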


\begin{proof}
    This is a consequence of Assumption \ref{ass:coefficients} together with the moment estimate established in Lemma \ref{lem:moment_estimates}.
\end{proof}

We introduce the variational inequality associated to the optimal stopping problem.

\begin{definition}[Classical Solution to the Variational Inequality]
\label{def:formal_vi}
A mapping $V \in \mathcal{C}^{1,2}([0, T) \times H) \cap \mathcal{C}([0, T] \times H)$ is a \emph{classical solution} to the parabolic variational inequality if it satisfies:
\begin{enumerate}[label={(\textit{\roman*})}, leftmargin=*]
    \item \emph{Regularity.} $\nabla V(t, x) \in \mathrm{Dom}(A^*)$ for all $(t, x) \in [0, T) \times H$, and the map $(t, x) \mapsto A^* \nabla V(t, x)$ is continuous.
    \item \emph{Terminal condition.} $V(T, x) = \Psi(x)$ for all $x \in H$.
    \item \emph{Pointwise equation.} The variational inequality holds pointwise on $[0, T) \times H$:
    \begin{equation}
    \label{eq:variational_inequality}
        \max \left( -\partial_t V(t, x) - \mathcal{L} V(t, x) - f(t, x), \, V(t, x) - \Phi(t, x) \right) = 0,
    \end{equation}
\end{enumerate}
where the differential operator $\mathcal{L}$ is defined by:
\begin{equation}
\label{eq:formal_operator}
    \mathcal{L} V(t, x) := \langle x, A^* \nabla V(t, x) \rangle_H + \langle b(t, x), \nabla V(t, x) \rangle_H + \frac{1}{2} \Tr \big( \sigma(t, x) \sigma^*(t, x) \nabla^2 V(t, x) \big).
\end{equation}
\end{definition}

\subsection{Penalization Method and Mild Formulations}
\label{sec:yosida}

Following \cite[Chapter 3]{BeLi82} we now introduce a penalization scheme to handle the obstacle constraint in the finite-horizon optimal stopping problem. By relaxing the condition $V \le \Phi$ through the nonlinear operator $-\frac{1}{\varepsilon}(u_\varepsilon - \Phi)^+$, the original problem is approximated by a family of unconstrained backward equations. 

The penalized Hamilton-Jacobi-Bellman (HJB) terminal value problem is formulated as follows:
\begin{equation} 
\label{eq:hjb_penalized}
    \begin{cases}
        \partial_t u_\varepsilon(t, x) + \mathcal{L} u_\varepsilon(t, x) + f(t, x) - \frac{1}{\varepsilon} \left( u_\varepsilon(t, x) - \Phi(t, x) \right)^+ = 0, & \forall (t, x) \in [0, T) \times H, \\
        u_\varepsilon(T, x) = \Psi(x), & \forall x \in H,
    \end{cases}
\end{equation}
where $\mathcal{L}$ denotes the differential operator introduced in \eqref{eq:formal_operator}.

\begin{remark}[Formal Differential Operator]
\label{rem:formal_operator}
The differential formulation \eqref{eq:hjb_penalized} serves primarily a heuristic purpose. The unbounded nature of the operator $A$ preclude in general the existence of classical solutions in $H$. Consequently, the analysis must proceed through equivalent mild integral equations (see \cite[Chapter 4]{FaGoSw17} or \cite{DPZa02} for more details) driven by the transition semigroup.
\end{remark}

\begin{definition}[Mild Integral Formulation]
\label{def:mild_formulation}
A mapping $u_\varepsilon \in \mathcal{B}_m([0, T] \times H)$ is defined as a \emph{mild solution} to the penalized backward equation \eqref{eq:hjb_penalized} if it satisfies the following integral equation on $[0, T] \times H$:
\begin{equation} 
\label{eq:direct_formulation}
    u_\varepsilon(t, x) = P_{t,T}[\Psi](x) + \int_t^T P_{t,s} \left[ f(s, \cdot) - \frac{1}{\varepsilon} \left( u_\varepsilon(s, \cdot) - \Phi(s, \cdot) \right)^+ \right](x) \, \mathrm{d}s.
\end{equation}
\end{definition}
The following Theorem provides an equivalent formulation for the mild solution.
\begin{theorem}[Shifted Mild Formulation and Equivalence]
\label{thm:equivalence_mild}
Let Assumptions \ref{ass:coefficients} and \ref{ass:problem_data} hold. For any mapping $u_\varepsilon \in \mathcal{B}_m([0, T] \times H)$ and any shift parameter $L \ge 0$, $u_\varepsilon$ is a mild solution of equation \eqref{eq:hjb_penalized} if and only if it satisfies the \emph{shifted formulation}:
\begin{equation} 
\label{eq:shifted_formulation}
    u_\varepsilon(t, x) = e^{-L(T-t)} P_{t,T}[\Psi](x) + \int_t^T e^{-L(s-t)} P_{t,s} \left[ f(s, \cdot) + L u_\varepsilon(s, \cdot) - \frac{1}{\varepsilon} \left( u_\varepsilon(s, \cdot) - \Phi(s, \cdot) \right)^+ \right](x) \, \mathrm{d}s.
\end{equation}
\end{theorem}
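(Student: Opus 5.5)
The plan is to prove a single algebraic identity that holds for \emph{every} $u\in\mathcal{B}_m([0,T]\times H)$, and to read off both implications from it. Fix $u\in\mathcal{B}_m([0,T]\times H)$ and $L\ge 0$. Set
$$G(s,x):=f(s,x)-\tfrac1\varepsilon\big(u(s,x)-\Phi(s,x)\big)^+ ,$$
which lies in $\mathcal{B}_m([0,T]\times H)$ by Assumption \ref{ass:problem_data}. Denote by $\mathcal{D}u$ the right-hand side of \eqref{eq:direct_formulation} and by $\mathcal{S}_L u$ the right-hand side of \eqref{eq:shifted_formulation}. The identity to prove is, for all $(t,x)$,
\begin{equation*}
\mathcal{S}_L u(t,x)=\mathcal{D}u(t,x)+L\int_t^T e^{-L(s-t)}P_{t,s}\big[u(s,\cdot)-\mathcal{D}u(s,\cdot)\big](x)\,\mathrm{d}s. \tag{$\ast$}
\end{equation*}

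\textbf{Preliminary facts.} Two properties of the transition semigroup are needed.
\begin{itemize}
\item[(a)] \emph{Weighted bound.} $|P_{t,s}[\varphi](x)|\le C_m\|\varphi\|_{\mathcal{B}_m}w_m(x)$, uniformly in $t\le s$. This follows from Lemma \ref{lem:moment_estimates}, applied directly when $m\ge2$ and combined with Jensen's inequality when $1\le m<2$.
\item[(b)] \emph{Chapman--Kolmogorov.} $P_{t,s}P_{s,r}=P_{t,r}$ on $\mathcal{B}_m(H)$ for $t\le s\le r$. This follows from the Markov property of the mild solution, which is a consequence of pathwise uniqueness, as in \cite{DPZa96}.
\end{itemize}

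\textbf{Proof of $(\ast)$.} Substitute $\mathcal{D}u(s,\cdot)=P_{s,T}[\Psi]+\int_s^T P_{s,r}[G(r,\cdot)]\,\mathrm{d}r$ into the term $L\int_t^T e^{-L(s-t)}P_{t,s}[\mathcal{D}u(s,\cdot)]\,\mathrm{d}s$.
\begin{itemize}
\item By (b), the terminal part becomes $P_{t,T}[\Psi]\,L\int_t^T e^{-L(s-t)}\,\mathrm{d}s=(1-e^{-L(T-t)})P_{t,T}[\Psi]$.
\item For the $G$ part, interchange $P_{t,s}$ with the $\mathrm{d}r$-integral (Fubini with respect to $\mathbb{P}\otimes\mathrm{d}r$) and then apply (b). After a second interchange in $(s,r)$ over the triangle $t\le s\le r\le T$, it becomes $\int_t^T(1-e^{-L(r-t)})P_{t,r}[G(r,\cdot)]\,\mathrm{d}r$.
\end{itemize}
Adding $e^{-L(T-t)}P_{t,T}[\Psi]+\int_t^T e^{-L(s-t)}P_{t,s}[G(s,\cdot)]\,\mathrm{d}s$ recovers exactly $\mathcal{D}u(t,x)$. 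Rearranging this gives $(\ast)$.

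\textbf{The two implications.} If $u=\mathcal{D}u$, the integral in $(\ast)$ vanishes, so $u=\mathcal{S}_Lu$. Conversely, suppose $u=\mathcal{S}_Lu$ and set $D:=u-\mathcal{D}u\in\mathcal{B}_m$. Then $(\ast)$ gives the homogeneous Volterra relation
$$D(t,x)=L\int_t^T e^{-L(s-t)}P_{t,s}[D(s,\cdot)](x)\,\mathrm{d}s .$$
Let $\phi(t):=\sup_x|D(t,x)|/w_m(x)$. By (a), $\phi(t)\le LC_m\int_t^T\phi(s)\,\mathrm{d}s$ with $\phi$ bounded, so the backward Gronwall inequality yields $\phi\equiv0$, i.e.\ $u=\mathcal{D}u$.

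\textbf{Main obstacle.} The delicate step is not the algebra but justifying the Fubini interchanges. This requires joint measurability of $(s,r,\omega)\mapsto G(r,X(r;s,\cdot))$, and more precisely of $(s,x)\mapsto P_{t,s}[\varphi(s,\cdot)](x)$ for Borel $\varphi$. I would first establish this for continuous $\varphi$, using the continuity of $(s,x)\mapsto X(\cdot;s,x)$ in $L^p$. I would then extend it to all Borel $\varphi\in\mathcal{B}_m$ by a monotone class argument, with integrability supplied by (a).
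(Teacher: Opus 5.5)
Your proposal is correct and follows essentially the paper's argument. Your identity $(\ast)$ is exactly the paper's substitution--Fubini--Chapman--Kolmogorov computation, and your converse is the paper's comparison of $S$ with $\tilde S$: the homogeneous Volterra relation for $D=u-\mathcal{D}u$ closed by backward Gronwall. Your single identity $(\ast)$ also makes explicit the paper's terse remark that ``the direct implication proves that $\tilde S$ satisfies the shifted formulation,'' and your treatment of the $1\le m<2$ moment bound and of the measurability needed for Fubini fills in details the paper leaves implicit.
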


\begin{proof}
We introduce the auxiliary mapping $H(s, \cdot) := f(s, \cdot) - \frac{1}{\varepsilon} (u_\varepsilon(s, \cdot) - \Phi(s, \cdot))^+$. By assumption, $H \in \mathcal{B}_m([0, T] \times H)$. Assume $u_\varepsilon$ satisfies the direct mild formulation. Substituting this into the right-hand side of \eqref{eq:shifted_formulation} defines the mapping:
\begin{align} 
\label{eq:proof_S_def}
    S(t, x) &:= e^{-L(T-t)} P_{t,T}[\Psi](x) + \int_t^T e^{-L(s-t)} P_{t,s} [H(s, \cdot)](x) \, \mathrm{d}s \nonumber \\ 
    &\quad + \int_t^T L e^{-L(s-t)} P_{t,s} \left[ P_{s,T}[\Psi] + \int_s^T P_{s,r} [H(r, \cdot)] \, \mathrm{d}r \right](x) \, \mathrm{d}s. 
\end{align}
By the semigroup property $P_{t,s} \circ P_{s,T} = P_{t,T}$, the integral involving the terminal condition simplifies to:
\begin{equation}
\label{eq:proof_boundary_eval}
    \int_t^T L e^{-L(s-t)} P_{t,T}[\Psi](x) \, \mathrm{d}s = P_{t,T}[\Psi](x) \left( 1 - e^{-L(T-t)} \right).
\end{equation}
Applying Fubini's Theorem to reverse the integration order in the double integral and using $P_{t,s} \circ P_{s,r} = P_{t,r}$ yields:
\begin{align}
\label{eq:proof_double_int}
    &\int_t^T \int_s^T L e^{-L(s-t)} P_{t,r} [H(r, \cdot)](x) \, \mathrm{d}r \, \mathrm{d}s = \int_t^T P_{t,r} [H(r, \cdot)](x) \left( 1 - e^{-L(r-t)} \right) \mathrm{d}r.
\end{align}
Combining \eqref{eq:proof_boundary_eval} and \eqref{eq:proof_double_int} with \eqref{eq:proof_S_def} cancels the exponential weights, recovering the direct mild formulation:
\begin{equation}
    S(t, x) = P_{t,T}[\Psi](x) + \int_t^T P_{t,s} [H(s, \cdot)](x) \, \mathrm{d}s = u_\varepsilon(t, x).
\end{equation}
Conversely, let $S \in \mathcal{B}_m([0, T] \times H)$ be a solution to the shifted formulation \eqref{eq:shifted_formulation}. Let $\tilde{S}$ be the mapping obtained by substituting $S$ into the direct mild formulation:
\begin{equation}
\label{eq:proof_S_tilde}
    \tilde{S}(t, x) := P_{t,T}[\Psi](x) + \int_t^T P_{t,s} \left[ f(s, \cdot) - \frac{1}{\varepsilon}(S(s, \cdot) - \Phi(s, \cdot))^+ \right](x) \, \mathrm{d}s.
\end{equation}
Defining $H_S(s, \cdot) := f(s, \cdot) - \frac{1}{\varepsilon}(S(s, \cdot) - \Phi(s, \cdot))^+$, the direct implication above proves that $\tilde{S}$ satisfies the shifted formulation. Subtracting the shifted equations for $S$ and $\tilde{S}$:
\begin{equation}
\label{eq:proof_difference}
    S(t, x) - \tilde{S}(t, x) = \int_t^T L e^{-L(s-t)} P_{t,s} \big[ S(s, \cdot) - \tilde{S}(s, \cdot) \big](x) \, \mathrm{d}s.
\end{equation}
Using the stability estimate $P_{t,s}[w_m](x) \le K_m w_m(x)$ from Lemma \ref{lem:moment_estimates} and bounding $e^{-L(s-t)} \le 1$, we deduce:
\begin{equation}
\label{eq:proof_gronwall_ineq}
    \|S(t, \cdot) - \tilde{S}(t, \cdot)\|_{\mathcal{B}_m} \le L K_m \int_t^T \|S(s, \cdot) - \tilde{S}(s, \cdot)\|_{\mathcal{B}_m} \, \mathrm{d}s.
\end{equation}
Applying the backward Gronwall's inequality implies that $\|S(t, \cdot) - \tilde{S}(t, \cdot)\|_{\mathcal{B}_m} = 0$ for all $t \in [0, T]$. Thus $S \equiv \tilde{S}$, confirming that $S$ is the unique mild solution.
\end{proof}

To prove existence and uniqueness of mild solutions, we introduce the following nonlinear integral operator.

\begin{definition}[Shifted Integral Operator]
\label{def:shifted_operator}
Let $\varepsilon > 0$ and consider a shift parameter $L \ge 0$. The \emph{nonlinear integral operator} $\mathcal{M}_{\varepsilon, L} : \mathcal{B}_m([0, T] \times H) \to \mathcal{B}_m([0, T] \times H)$ is defined by:
\begin{equation} 
\label{eq:shifted_operator_eq}
    \mathcal{M}_{\varepsilon, L}(u)(t, x) := e^{-L(T-t)} P_{t,T}[\Psi](x) + \int_t^T e^{-L(s-t)} P_{t,s} \left[ f(s, \cdot) + L u(s, \cdot) - \frac{1}{\varepsilon} \left( u(s, \cdot) - \Phi(s, \cdot) \right)^+ \right](x) \, \mathrm{d}s.
\end{equation}
\end{definition}

\begin{remark}[Measurability and Topological Invariance] 
\label{rem:topological_invariance}
The integral operator $\mathcal{M}_{\varepsilon, L}$ maps $\mathcal{B}_m([0, T] \times H)$ into itself. Borel measurability is preserved by the action of the transition semigroup and the temporal integration. Moreover, the uniform parabolic moment estimates (Lemma \ref{lem:moment_estimates}) ensure that the transition semigroup preserves the polynomial growth envelope. This guarantees a finite $\|\cdot\|_{\mathcal{B}_{m, T}}$ norm.
\end{remark}

We now prove the existence and uniqueness of the penalized mild solution $u_\varepsilon$ within this measurable framework.

\begin{theorem}[Existence and Uniqueness] 
\label{thm:existence_uniqueness}
Let Assumptions \ref{ass:coefficients} and \ref{ass:problem_data} hold. For any $\varepsilon > 0$ and any shift parameter $L \ge 1/\varepsilon$, the nonlinear operator $\mathcal{M}_{\varepsilon, L}$ admits a unique fixed point $u_\varepsilon \in \mathcal{B}_m([0, T] \times H)$, which corresponds to the unique mild solution of the penalized equation \eqref{eq:hjb_penalized}.
\end{theorem}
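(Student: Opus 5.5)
The plan is to apply the Banach fixed point theorem to $\mathcal{M}_{\varepsilon, L}$ on the Banach space $\mathcal{B}_m([0, T] \times H)$ (Lemma \ref{lem:completeness_weighted_space}). We equip this space with an equivalent exponentially weighted norm in time,
\[
\|u\|_{\lambda} := \sup_{(t,x) \in [0,T]\times H} e^{-\lambda (T-t)} \frac{|u(t,x)|}{w_m(x)},
\]
for a large $\lambda>0$ still to be chosen. By Remark \ref{rem:topological_invariance}, $\mathcal{M}_{\varepsilon,L}$ maps $\mathcal{B}_m([0,T]\times H)$ into itself. A fixed point is then, by Theorem \ref{thm:equivalence_mild}, exactly a mild solution of \eqref{eq:hjb_penalized}, and conversely. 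Hence existence and uniqueness of the fixed point gives existence and uniqueness of the mild solution.

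The key structural step is the choice $L \ge 1/\varepsilon$. For fixed $(s,y)$, the scalar map
\[
g(r) := L r - \tfrac{1}{\varepsilon}(r - \Phi(s,y))^+
\]
is nondecreasing and Lipschitz with constant $L$. Indeed, its slope is $L$ on $\{r<\Phi\}$ and $L - 1/\varepsilon \in [0, L]$ on $\{r>\Phi\}$. Hence, for $u_1,u_2 \in \mathcal{B}_m$,
\[
|g(u_1(s,y)) - g(u_2(s,y))| \le L\,|u_1(s,y)-u_2(s,y)|.
\]
Monotonicity of $g$ is not needed for the contraction itself, but it will be useful later for comparison. The nonexpansiveness constant $L$ is what makes the estimate clean: the dependence on $1/\varepsilon$ is absorbed into $L$.

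Next, we estimate the difference. Since $P_{t,s}$ is positive and linear, and since $P_{t,s}[w_m](x) \le K_m w_m(x)$ by Lemma \ref{lem:moment_estimates} (with $K_m$ possibly enlarged, also covering the case $1\le m<2$ via Jensen), we get
\[
|\mathcal{M}_{\varepsilon,L}(u_1)(t,x) - \mathcal{M}_{\varepsilon,L}(u_2)(t,x)| \le L K_m w_m(x) \int_t^T e^{\lambda (T-s)} \|u_1-u_2\|_\lambda \, \mathrm{d}s .
\]
Using $\int_t^T e^{\lambda(T-s)}\,\mathrm{d}s \le e^{\lambda(T-t)}/\lambda$, this yields
\[
\|\mathcal{M}_{\varepsilon,L}(u_1)-\mathcal{M}_{\varepsilon,L}(u_2)\|_\lambda \le \frac{L K_m}{\lambda}\|u_1-u_2\|_\lambda .
\]
Choosing $\lambda > L K_m$ gives a strict contraction, and the Banach fixed point theorem produces a unique fixed point $u_\varepsilon$. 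Alternatively, one can avoid the weighted norm by iterating the Volterra-type bound to get $\|\mathcal{M}^n(u_1)-\mathcal{M}^n(u_2)\|_{\mathcal{B}_{m,T}} \le \frac{(LK_mT)^n}{n!}\|u_1-u_2\|_{\mathcal{B}_{m,T}}$.

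The main technical points are twofold. First, one must justify that $\mathcal{M}_{\varepsilon,L}(u)$ is Borel measurable in $(t,x)$ and that the time integral makes sense. This needs joint measurability of $(t,s,x) \mapsto P_{t,s}[u(s,\cdot)](x)$, which follows from the measurable dependence of the flow $X(s;t,x)$ on $(t,x)$ and a monotone class argument; this is asserted in Remark \ref{rem:topological_invariance}. Second, one must check that $P_{t,s}$ preserves the weight with a constant uniform in $t\le s$, which is exactly Lemma \ref{lem:moment_estimates}. The remaining step is to confirm that the fixed point does not depend on the particular $L \ge 1/\varepsilon$. This holds because for every such $L$ the fixed point is a mild solution of \eqref{eq:direct_formulation} by Theorem \ref{thm:equivalence_mild}, and mild solutions are unique, being fixed points of $\mathcal{M}_{\varepsilon,L}$ for any one admissible $L$.
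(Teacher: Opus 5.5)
Your proof is correct and follows the paper's strategy: you use the same $L$-Lipschitz bound for $r\mapsto Lr-\frac{1}{\varepsilon}(r-\Phi(s,y))^+$ together with $P_{t,s}[w_m]\le K_m w_m$, and the only difference is that you get a one-step contraction via the Bielecki-type norm $\|\cdot\|_\lambda$, whereas the paper iterates the Volterra bound to obtain the factor $(LK_mT)^n/n!$ (which you also mention as an alternative). Your additional remarks on the case $1\le m<2$ and on why the fixed point does not depend on the choice of $L\ge 1/\varepsilon$ are correct, and they make explicit points the paper leaves implicit.
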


\begin{proof}
Let $u, v \in \mathcal{B}_m([0, T] \times H)$. To establish the contraction property of the operator $\mathcal{M}_{\varepsilon, L}$, we evaluate the pointwise difference between the operator applied to $u$ and $v$. Subtracting the respective representations, the terminal terms cancel out. By the linearity and positivity of the transition semigroup, we obtain:
\begin{gather}
\label{eq:operator_diff}
    |\mathcal{M}_{\varepsilon, L}(u)(t, x) - \mathcal{M}_{\varepsilon, L}(v)(t, x)| \le\\ \int_t^T e^{-L(s-t)} P_{t,s} \left[ \Big| L u(s, \cdot) - \frac{1}{\varepsilon}(u(s, \cdot) - \Phi)^+ - \Big( L v(s, \cdot) - \frac{1}{\varepsilon}(v(s, \cdot) - \Phi)^+ \Big) \Big| \right](x) \, \mathrm{d}s.
\end{gather}
To bound the integrand in the previous step, we observe that for any choice of $L \ge 1/\varepsilon$ and any constant $c \in \R$, the real-valued mapping $r \mapsto L r - \frac{1}{\varepsilon}(r - c)^+$ is globally $L$-Lipschitz continuous. Applying this property pointwise with $c = \Phi(s, y)$ yields:
\begin{equation}
\label{eq:lipschitz_bound}
    \Big| L u(s, y) - \frac{1}{\varepsilon}(u(s, y) - \Phi(s, y))^+ - \Big( L v(s, y) - \frac{1}{\varepsilon}(v(s, y) - \Phi(s, y))^+ \Big) \Big| \le L |u(s, y) - v(s, y)|.
\end{equation}
Extracting the weighted norm associated with the measurable parabolic space $\mathcal{B}_m([0, T] \times H)$:
\begin{equation}
\label{eq:weighted_norm_bound}
    |u(s, y) - v(s, y)| \le \|u(s, \cdot) - v(s, \cdot)\|_{\mathcal{B}_m} w_m(y) \le \|u - v\|_{\mathcal{B}_{m, T}} w_m(y).
\end{equation}
Substituting the bound \eqref{eq:weighted_norm_bound} back into the integral \eqref{eq:operator_diff}, and leveraging the exact transition operator growth estimate $P_{t,s}[w_m](x) \le K_m w_m(x)$ from Lemma \ref{lem:moment_estimates}, we get:
\begin{equation}
\label{eq:integral_bound_1}
    |\mathcal{M}_{\varepsilon, L}(u)(t, x) - \mathcal{M}_{\varepsilon, L}(v)(t, x)| \le L K_m \|u - v\|_{\mathcal{B}_{m, T}} w_m(x) \int_t^T e^{-L(s-t)} \, \mathrm{d}s.
\end{equation}
Bounding the exponential integrand by $e^{-L(s-t)} \le 1$, we obtain the exact base case for the iteration:
\begin{equation}
\label{eq:base_case}
    |\mathcal{M}_{\varepsilon, L}(u)(t, x) - \mathcal{M}_{\varepsilon, L}(v)(t, x)| \le L K_m \|u - v\|_{\mathcal{B}_{m, T}} w_m(x) (T-t).
\end{equation}
Proceeding by standard induction on the operator composition, evaluating the repeated temporal integrals of Volterra type yields the estimate for the $n$-th iterate:
\begin{equation}
\label{eq:n_iterate}
    |\mathcal{M}^n_{\varepsilon, L}(u)(t, x) - \mathcal{M}^n_{\varepsilon, L}(v)(t, x)| \le \frac{(L K_m)^n (T-t)^n}{n!} \|u - v\|_{\mathcal{B}_{m, T}} w_m(x).
\end{equation}
We divide by the spatial weight $w_m(x)$ and take the supremum over the domain $[0, T] \times H$, obtaining the global norm bound:
\begin{equation}
\label{eq:global_n_iterate}
    \|\mathcal{M}^n_{\varepsilon, L}(u) - \mathcal{M}^n_{\varepsilon, L}(v)\|_{\mathcal{B}_{m, T}} \le \frac{(L K_m T)^n}{n!} \|u - v\|_{\mathcal{B}_{m, T}}.
\end{equation}
 Consequently, there exists an integer $n \ge 1$ such that $\mathcal{M}^n_{\varepsilon, L}$ becomes a contraction on $\mathcal{B}_m([0, T] \times H)$. By the generalized Banach Fixed-Point Theorem, the operator $\mathcal{M}_{\varepsilon, L}$ admits a unique fixed point $u_\varepsilon$, concluding the proof.
\end{proof}

We now prove the joint continuity of the solution to the penalized backward equation \eqref{eq:hjb_penalized}.

\begin{corollary}[Joint Continuity of the Penalized Solution]
\label{cor:continuity_penalized}
Under Assumptions \ref{ass:coefficients} and \ref{ass:problem_data}, the unique mild solution $u_\varepsilon$ of the penalized equation \eqref{eq:hjb_penalized} belongs to $\mathcal{C}_m([0, T] \times H)$.
\end{corollary}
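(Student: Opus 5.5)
The plan is to show that $\mathcal{M}_{\varepsilon,L}$ maps the closed subspace $\mathcal{C}_m([0,T]\times H)$ into itself. Then we run the Banach fixed-point iteration of Theorem \ref{thm:existence_uniqueness} inside this subspace. Since $\mathcal{C}_m([0,T]\times H)$ is closed in $\mathcal{B}_m([0,T]\times H)$ by Lemma \ref{lem:closedness_continuous_space}, the iterates $\mathcal{M}^k_{\varepsilon,L}(u_0)$ started from any continuous $u_0$ (e.g.\ $u_0\equiv 0$) converge in $\|\cdot\|_{\mathcal{B}_{m,T}}$ to the unique fixed point $u_\varepsilon$. Hence $u_\varepsilon\in\mathcal{C}_m([0,T]\times H)$.

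\textbf{Step 1: Feller-type continuity of the flow.} Take $\varphi\in\mathcal{C}_m(H)$ and $g\in\mathcal{C}_m([0,T]\times H)$. We show that $(t,x)\mapsto P_{t,T}[\varphi](x)$ and $(t,x)\mapsto \int_t^T e^{-L(s-t)}P_{t,s}[g(s,\cdot)](x)\,\mathrm{d}s$ are jointly continuous. For this we need continuous dependence of the mild solution on $(t,x)$. Extend $X(s;t,x):=x$ for $s\le t$. Then for $(t_n,x_n)\to(t,x)$ we expect $\E[\sup_{s\in[0,T]}\|X(s;t_n,x_n)-X(s;t,x)\|_H^2]\to 0$, so that $X(s_n;t_n,x_n)\to X(s;t,x)$ in probability whenever $s_n\to s$.

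This is the standard continuous dependence result for mild SPDE solutions (e.g.\ \cite{DPZa14}, \cite{FaGoSw17}). It follows from the Lipschitz bounds in Assumption \ref{ass:coefficients}, strong continuity of $e^{\tau A}$, the factorization or Burkholder estimates, and Gronwall's inequality. It is exactly the point where one must be careful, since $e^{\tau A}$ is only strongly continuous. The term $e^{(s-t_n)A}x_n-e^{(s-t)A}x$ is handled by strong continuity together with $\|e^{\tau A}\|\le Me^{\omega\tau}$, uniformly in $s$ on compacts. I expect this step to be the main obstacle, and I would cite it rather than reprove it.

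Given convergence in probability, continuity of $g$ and of $\varphi$ yields convergence of $g(s_n,X(s_n;t_n,x_n))$ in probability. Uniform integrability comes from Lemma \ref{lem:moment_estimates} applied with exponent $2m$: the family $w_m(X(\cdot))$ is bounded in $L^2$ uniformly for $(t_n,x_n)$ in a bounded set. Vitali's theorem then gives convergence of expectations. For the time integral, write it as $\int_0^T \ind_{\{s\ge t\}}e^{-L(s-t)}\E[g(s,X(s;t,x))]\,\mathrm{d}s$. The integrand converges for a.e.\ $s$ and is dominated by $K_m\|g\|_{\mathcal{B}_{m,T}}\sup_n w_m(x_n)$, so dominated convergence applies.

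\textbf{Step 2: Invariance and conclusion.} If $u\in\mathcal{C}_m([0,T]\times H)$, then the integrand $g:=f+Lu-\frac1\varepsilon(u-\Phi)^+$ is continuous with polynomial growth, because $f,\Phi$ are continuous by Assumption \ref{ass:problem_data} and $r\mapsto r^+$ is continuous. Likewise $\Psi\in\mathcal{C}_m(H)$, and $e^{-L(T-t)}$ is continuous. By Step 1, $\mathcal{M}_{\varepsilon,L}(u)\in\mathcal{C}_m([0,T]\times H)$. The contraction estimate \eqref{eq:global_n_iterate} shows that the Picard iterates form a Cauchy sequence in $\mathcal{B}_{m,T}$. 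Their limit is continuous by Lemma \ref{lem:closedness_continuous_space}, and it coincides with $u_\varepsilon$ by the uniqueness in Theorem \ref{thm:existence_uniqueness}.
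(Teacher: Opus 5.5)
Your proposal is correct and follows essentially the same route as the paper. You show that $\mathcal{M}_{\varepsilon,L}$ maps the closed subspace $\mathcal{C}_m([0,T]\times H)$ into itself, using continuous dependence of the mild flow on $(t,x)$ and dominated convergence for the time integral, and then conclude via the contraction of an iterate and the uniqueness in Theorem~\ref{thm:existence_uniqueness}. Your handling of the moving lower limit through the indicator $\ind_{\{s\ge t\}}$, and of convergence of expectations through $2m$-th moments and Vitali's theorem, is if anything slightly more careful than the paper's sketch.
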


\begin{proof}
We must show that the contraction operator $\mathcal{M}_{\varepsilon, L}$ maps $\mathcal{C}_m([0, T] \times H)$ into itself. Let $u \in \mathcal{C}_m([0, T] \times H)$. By the hypotheses on the problem data, the terminal mapping $\Psi$, the forcing term $f$, and the obstacle $\Phi$ are continuous. Therefore, the source term in the integral representation, defined as:
\begin{equation*}
    g_u(s, x) := f(s, x) + L u(s, x) - \frac{1}{\varepsilon}(u(s, x) - \Phi(s, x))^+,
\end{equation*}
is a mapping belonging to $\mathcal{C}_m([0, T] \times H)$. The action of the operator is given by:
\begin{equation}
\label{eq:operator_action_continuity}
    \mathcal{M}_{\varepsilon, L}(u)(t, x) = e^{-L(T-t)} P_{t,T}[\Psi](x) + \int_t^T e^{-L(s-t)} P_{t,s}[g_u(s, \cdot)](x) \, \mathrm{d}s.
\end{equation}
We analyze the terms on the right-hand side of \eqref{eq:operator_action_continuity} to establish joint continuity at an arbitrary point $(t_0, x_0) \in [0, T] \times H$. Let $(t_n, x_n) \to (t_0, x_0)$. For the terminal condition, the mean-square continuity of the mild stochastic flow $(t,x) \mapsto X(T; t, x)$ with respect to the initial data, combined with the spatial continuity and polynomial growth of $\Psi$, ensures that $(t, x) \mapsto P_{t,T}[\Psi](x) = \mathbb{E}[\Psi(X(T; t, x))]$ is jointly continuous. For the integral term, we decompose the difference via the triangle inequality:
\begin{align*}
    &\left| \int_{t_n}^T e^{-L(s-t_n)} P_{t_n,s}[g_u(s, \cdot)](x_n) \, \mathrm{d}s - \int_{t_0}^T e^{-L(s-t_0)} P_{t_0,s}[g_u(s, \cdot)](x_0) \, \mathrm{d}s \right| \\
    &\quad \le \int_{t_0 \lor t_n}^T \left| e^{-L(s-t_n)} P_{t_n,s}[g_u(s, \cdot)](x_n) - e^{-L(s-t_0)} P_{t_0,s}[g_u(s, \cdot)](x_0) \right| \mathrm{d}s \\
    & \quad \quad + \left| \int_{t_n}^{t_0 \lor t_n} e^{-L(s-t_n)} P_{t_n,s}[g_u(s, \cdot)](x_n) \, \mathrm{d}s \right| + \left| \int_{t_0}^{t_0 \lor t_n} e^{-L(s-t_0)} P_{t_0,s}[g_u(s, \cdot)](x_0) \, \mathrm{d}s \right|.
\end{align*}
The last two terms represent integrals over regions whose measure $|t_n - t_0|$ shrinks to zero as $n \to \infty$. Since the convergent sequence $(x_n)_{n \in \mathbb{N}}$ is bounded in $H$, the stability estimate $P_{t,s}[w_m](x_n) \le K_m w_m(x_n)$ from Lemma \ref{lem:moment_estimates} ensures the integrands are uniformly bounded by a constant independent of $n$, hence these terms vanish. For the first term, the integrand converges to zero pointwise almost everywhere as $n \to \infty$ due to the joint continuity of $(t,x) \mapsto \mathbb{E}[g_u(s, X(s; t, x))]$ for $s > t_0 \lor t_n$. Relying on the same uniform bound established above, the Dominated Convergence Theorem ensures that this integral also vanishes. Consequently, $\mathcal{M}_{\varepsilon, L}(u)$ is jointly continuous, confirming that $\mathcal{M}_{\varepsilon, L}$ maps the closed subspace $\mathcal{C}_m([0, T] \times H)$ into itself. Since $\mathcal{M}_{\varepsilon, L}$ remains a contraction on this subspace (via its $n$-th iterate), it admits a unique fixed point within $\mathcal{C}_m([0, T] \times H)$. By the uniqueness established in Theorem \ref{thm:existence_uniqueness}, this continuous fixed point must coincide with $u_\varepsilon$.
\end{proof}

The following result ensures the stability of the mild solutions of \ref{eq:hjb_penalized} with respect to variations of the data.

\begin{lemma}[Stability of the Mild Solution] 
\label{lem:stability_mild}
Let Assumptions \ref{ass:coefficients} and \ref{ass:problem_data} hold. For a fixed penalization parameter $\varepsilon > 0$, let $u_{\varepsilon, 1}$ and $u_{\varepsilon, 2}$ be the respective mild solutions of \eqref{eq:hjb_penalized} driven by the problem data triplets $(f_1, \Phi_1, \Psi_1)$ and $(f_2, \Phi_2, \Psi_2)$. Then there exists a constant $C_T > 0$ such that:
\begin{equation*}
    \|u_{\varepsilon, 1}(t, \cdot) - u_{\varepsilon, 2}(t, \cdot)\|_{\mathcal{B}_m} \le C_T \left( \|\Psi_1 - \Psi_2\|_{\mathcal{B}_m} + \|f_1 - f_2\|_{\mathcal{B}_{m, T}} + \frac{1}{\varepsilon} \|\Phi_1 - \Phi_2\|_{\mathcal{B}_{m, T}} \right).
\end{equation*}
\end{lemma}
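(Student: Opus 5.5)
We subtract the two direct mild formulations \eqref{eq:direct_formulation} written for $u_{\varepsilon,1}$ and $u_{\varepsilon,2}$, and close the estimate with a backward Gronwall argument in the weighted norm $\|\cdot\|_{\mathcal{B}_m}$. The shift $L$ is not needed. Set $\delta(t,x) := u_{\varepsilon,1}(t,x) - u_{\varepsilon,2}(t,x)$. Then
\begin{equation*}
\delta(t,x) = P_{t,T}[\Psi_1-\Psi_2](x) + \int_t^T P_{t,s}\Big[ (f_1-f_2)(s,\cdot) - \tfrac{1}{\varepsilon}\big( (u_{\varepsilon,1}-\Phi_1)^+ - (u_{\varepsilon,2}-\Phi_2)^+\big)(s,\cdot)\Big](x)\,\mathrm{d}s .
\end{equation*}

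The key pointwise step uses the fact that $r\mapsto r^+$ is $1$-Lipschitz. This gives
\begin{equation*}
\big|(u_{\varepsilon,1}-\Phi_1)^+ - (u_{\varepsilon,2}-\Phi_2)^+\big| \le |\delta| + |\Phi_1-\Phi_2| .
\end{equation*}
Next we use the positivity of $P_{t,s}$ and bound each function by its weighted norm times $w_m$, for instance $|\Psi_1-\Psi_2|\le \|\Psi_1-\Psi_2\|_{\mathcal{B}_m} w_m$. Together with the growth estimate $P_{t,s}[w_m](x)\le K_m w_m(x)$ from Lemma \ref{lem:moment_estimates}, as already used in the proofs of Theorems \ref{thm:equivalence_mild} and \ref{thm:existence_uniqueness}, we obtain after dividing by $w_m(x)$ and taking the supremum in $x$:
\begin{equation*}
\|\delta(t,\cdot)\|_{\mathcal{B}_m} \le K_m\Big(\|\Psi_1-\Psi_2\|_{\mathcal{B}_m} + T\|f_1-f_2\|_{\mathcal{B}_{m,T}} + \tfrac{T}{\varepsilon}\|\Phi_1-\Phi_2\|_{\mathcal{B}_{m,T}}\Big) + \frac{K_m}{\varepsilon}\int_t^T \|\delta(s,\cdot)\|_{\mathcal{B}_m}\,\mathrm{d}s .
\end{equation*}

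The function $t\mapsto\|\delta(t,\cdot)\|_{\mathcal{B}_m}$ is bounded by Theorem \ref{thm:existence_uniqueness}. For Lebesgue measurability of this function, I would either use the continuity from Corollary \ref{cor:continuity_penalized} or simply work with the upper integral, since Gronwall only needs boundedness. The backward Gronwall inequality then yields
\begin{equation*}
\|\delta(t,\cdot)\|_{\mathcal{B}_m}\le e^{K_m T/\varepsilon}\big(\cdots\big),
\end{equation*}
which has the claimed form with $C_T := K_m \max(1,T)\, e^{K_m T/\varepsilon}$.

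The main obstacle is that this constant depends on $\varepsilon$. The statement only asserts the existence of some $C_T$ for fixed $\varepsilon$, so this is acceptable. If an $\varepsilon$-uniform constant is wanted, I would first try a sharper treatment of the penalty term. The monotonicity of $r\mapsto\frac1\varepsilon(r-c)^+$ makes the penalty term dissipative. One would write
\begin{equation*}
\tfrac1\varepsilon\big((u_1-\Phi_1)^+-(u_2-\Phi_1)^+\big) = \theta\,\delta
\end{equation*}
with $\theta\in[0,1/\varepsilon]$ measurable, and represent $\delta$ through a Feynman--Kac formula with discount $\theta$. That discount is bounded by $1$, which removes the Gronwall exponential and leaves only the explicit $\frac1\varepsilon\|\Phi_1-\Phi_2\|$ term. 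This representation needs a Feynman--Kac identification of linear mild equations that appears only later in the paper (Theorem \ref{thm:feynman_kac}), so for the present lemma the Gronwall route is the one I would write down.
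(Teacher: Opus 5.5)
Your proposal is correct and follows essentially the paper's argument: subtract the two mild formulations, use the Lipschitz property of the positive part together with $P_{t,s}[w_m]\le K_m w_m$, and close with a backward Gronwall inequality. This yields an $\varepsilon$-dependent constant $K_m\max(1,T)e^{K_mT/\varepsilon}$; the paper instead passes through the shifted formulation with $L=1/\varepsilon$ and then bounds $e^{-L(s-t)}\le 1$, which produces exactly the same Gronwall inequality as your direct route.

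Your measurability remark covers a point the paper skips. With Corollary \ref{cor:continuity_penalized}, the map $t\mapsto\|\delta(t,\cdot)\|_{\mathcal{B}_m}$ is lower semicontinuous as a supremum of continuous functions, hence Borel, so either of your two fixes works.
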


\begin{proof}
To estimate the difference between the mild solutions $u_{\varepsilon, 1}$ and $u_{\varepsilon, 2}$, we use the shifted integral formulation established in Theorem \ref{thm:equivalence_mild}, setting the shift parameter $L := 1/\varepsilon$. Using the Lipschitz continuity of the non-linear operator $r \mapsto Lr - \frac{1}{\varepsilon}(r - c)^+$, the linearity of the transition evolution family, and the triangle inequality, we obtain:
\begin{align*} 
\label{eq:stability_initial_diff}
    |u_{\varepsilon, 1}(t, x) - u_{\varepsilon, 2}(t, x)| &\le e^{-L(T-t)} P_{t,T}[|\Psi_1 - \Psi_2|](x) \nonumber \\
    & \quad + \int_t^T e^{-L(s-t)} P_{t,s}\left[ |f_1(s, \cdot) - f_2(s, \cdot)| + L |u_{\varepsilon, 1}(s, \cdot) - u_{\varepsilon, 2}(s, \cdot)| + \frac{1}{\varepsilon} |\Phi_1(s, \cdot) - \Phi_2(s, \cdot)| \right](x) \, \mathrm{d}s.
\end{align*}
Dividing by the spatial weight $w_m(x)$ and applying the stability estimate $P_{t,s}[w_m](x) \le K_m w_m(x)$ from Lemma \ref{lem:moment_estimates}, we obtain:
\begin{align*}
    \|u_{\varepsilon, 1}(t, \cdot) - u_{\varepsilon, 2}(t, \cdot)\|_{\mathcal{B}_m} &\le K_m \|\Psi_1 - \Psi_2\|_{\mathcal{B}_m} \\
    & \quad + K_m \int_t^T e^{-L(s-t)} \left( \|f_1 - f_2\|_{\mathcal{B}_{m, T}} + \frac{1}{\varepsilon} \|u_{\varepsilon, 1} - u_{\varepsilon, 2}\|_{\mathcal{B}_m} + \frac{1}{\varepsilon} \|\Phi_1 - \Phi_2\|_{\mathcal{B}_{m, T}} \right) \mathrm{d}s.
\end{align*}
Since $e^{-L(s-t)} \le 1$ for $s \ge t$ and $L=1/\varepsilon$, we deduce:
\begin{align*}
    \|u_{\varepsilon, 1}(t, \cdot) - u_{\varepsilon, 2}(t, \cdot)\|_{\mathcal{B}_m} &\le K_m \left( \|\Psi_1 - \Psi_2\|_{\mathcal{B}_m} + T \|f_1 - f_2\|_{\mathcal{B}_{m, T}} + \frac{T}{\varepsilon} \|\Phi_1 - \Phi_2\|_{\mathcal{B}_{m, T}} \right) \\
    & \quad + \frac{K_m}{\varepsilon} \int_t^T \|u_{\varepsilon, 1}(s, \cdot) - u_{\varepsilon, 2}(s, \cdot)\|_{\mathcal{B}_m} \, \mathrm{d}s.
\end{align*}
Applying backward Gronwall's Lemma yields:
\begin{equation}
\label{eq:stability_gronwall_result}
    \|u_{\varepsilon, 1}(t, \cdot) - u_{\varepsilon, 2}(t, \cdot)\|_{\mathcal{B}_m} \le K_m \left[ \|\Psi_1 - \Psi_2\|_{\mathcal{B}_m} + T \|f_1 - f_2\|_{\mathcal{B}_{m, T}} + \frac{T}{\varepsilon} \|\Phi_1 - \Phi_2\|_{\mathcal{B}_{m, T}} \right] e^{\frac{K_m (T-t)}{\varepsilon}}.
\end{equation}
By defining $C_T := K_m \max(1, T, \frac{T}{\varepsilon}) e^{\frac{K_m T}{\varepsilon}}$, the estimate simplifies to the desired stability bound, concluding the proof.
\end{proof}

\subsection{The Penalized Minimization Problem}
\label{sec:randomized}
Following the line of \cite[Chapter 3, Section 4, Theorem 4.4]{BeLi82}, we introduce the probabilistic framework for the penalized minimization problem. 

\begin{definition}[Control and Cost]
\label{def:randomized_control}
\begin{enumerate}[label={(\textit{\roman*})}, leftmargin=*]
    \item[]
    \item  \emph{Admissible Control Space.} Let $\mathcal{V}_\varepsilon$ denote the space of all $\F$-progressively measurable processes $\gamma: \Omega \times [0, T] \to \R$ taking values $\mathbb{P}$-a.s. in the closed interval $[0, 1/\varepsilon]$.
    
    \item  \emph{Penalized Cost Functional.} For any control process $\gamma \in \mathcal{V}_\varepsilon$, the associated penalized cost functional $J_\varepsilon: [0, T] \times H \times \mathcal{V}_\varepsilon \to \R$ is defined by:
    \begin{equation} 
    \label{eq:randomized_cost}
        J_\varepsilon(t, x; \gamma) := \E \left[ \int_t^T e^{-\int_t^s \gamma_r \mathrm{d}r} \Big( f(s, X(s)) + \gamma_s \Phi(s, X(s)) \Big) \mathrm{d}s + e^{-\int_t^T \gamma_r \mathrm{d}r} \Psi(X(T)) \right],
    \end{equation}
    where $X(s) \equiv X(s; t, x)$ is the continuous mild solution \eqref{eq:mild_solution_eq} of the underlying semilinear state equation \eqref{eq:cauchy_problem}.
\end{enumerate}
\end{definition}

The following theorem establishes a probabilistic representation, linking the penalized backward equation to the penalized minimization problem via the martingale property of the mild solution.

\begin{theorem}[Probabilistic Representation]
\label{thm:feynman_kac}
Let Assumptions \ref{ass:coefficients} and \ref{ass:problem_data} hold. The mild solution $u_\varepsilon$ to the penalized backward equation \eqref{eq:hjb_penalized} coincides with the minimal penalized cost functional \eqref{eq:randomized_cost} over the admissible control space $\mathcal{V}_\varepsilon$:
\begin{equation} 
    u_\varepsilon(t, x) = \inf_{\gamma \in \mathcal{V}_\varepsilon} J_\varepsilon(t, x; \gamma).
\end{equation}
\end{theorem}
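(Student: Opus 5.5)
The plan is to avoid Itô's formula, since $u_\varepsilon$ is only a mild solution and no differentiability is available at this stage. Instead I work with the martingale structure that the mild formulation encodes through the Markov property. Write $g(s,y) := f(s,y) - \frac{1}{\varepsilon}(u_\varepsilon(s,y)-\Phi(s,y))^+$. From Definition \ref{def:mild_formulation}, together with the Markov property of the flow $X(\cdot;t,x)$ and the semigroup identity $P_{t,s}\circ P_{s,r}=P_{t,r}$, I first show that
\begin{equation*}
M(s) := u_\varepsilon(s,X(s)) + \int_t^s g(r,X(r))\,\mathrm{d}r, \qquad s\in[t,T],
\end{equation*}
is an $\F$-martingale. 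Indeed, $\E[u_\varepsilon(T,X(T)) + \int_s^T g(r,X(r))\,\mathrm{d}r \mid \mathcal{F}_s] = P_{s,T}[\Psi](X(s)) + \int_s^T P_{s,r}[g(r,\cdot)](X(s))\,\mathrm{d}r = u_\varepsilon(s,X(s))$. Integrability follows from Lemma \ref{lem:moment_estimates} and $u_\varepsilon,g\in\mathcal{B}_m$, and continuity of $u_\varepsilon$ (Corollary \ref{cor:continuity_penalized}) makes $M$ a continuous-path process. For any $\gamma\in\mathcal{V}_\varepsilon$, set $\Gamma_s := e^{-\int_t^s\gamma_r\mathrm{d}r}$, a continuous, adapted, finite-variation process bounded in $[0,1]$. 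Integration by parts for a continuous semimartingale times a finite-variation process gives
\begin{equation*}
\Gamma_T u_\varepsilon(T,X(T)) - u_\varepsilon(t,x) = \int_t^T \Gamma_s\,\mathrm{d}M(s) - \int_t^T \Gamma_s\big(g(s,X(s)) + \gamma_s u_\varepsilon(s,X(s))\big)\,\mathrm{d}s .
\end{equation*}
Since $\Gamma$ is bounded and $M$ is a square-integrable martingale (use $m$-moments with the growth exponent $2m$, which Lemma \ref{lem:moment_estimates} covers), the stochastic integral has zero expectation. Using $u_\varepsilon(T,\cdot)=\Psi$ and rearranging, I obtain the key identity
\begin{equation*}
u_\varepsilon(t,x) = J_\varepsilon(t,x;\gamma) + \E\!\left[\int_t^T \Gamma_s\Big(\tfrac{1}{\varepsilon}(u_\varepsilon-\Phi)^+ - \gamma_s(u_\varepsilon-\Phi)\Big)(s,X(s))\,\mathrm{d}s\right].
\end{equation*}

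The elementary inequality $\frac1\varepsilon r^+ \ge \gamma r$ holds for every $r\in\R$ and $\gamma\in[0,1/\varepsilon]$, with equality at $\gamma = \frac1\varepsilon\ind_{\{r>0\}}$. Hence the correction term is nonnegative, so $u_\varepsilon(t,x)\ge J_\varepsilon(t,x;\gamma)$ for every $\gamma$. This gives $u_\varepsilon \ge \inf_\gamma J_\varepsilon$. For the reverse inequality I choose the feedback control $\gamma^*_s := \frac{1}{\varepsilon}\ind_{\{u_\varepsilon(s,X(s))>\Phi(s,X(s))\}}$. It is progressively measurable because $u_\varepsilon$ and $\Phi$ are jointly continuous and $X$ has continuous adapted paths, and it takes values in $[0,1/\varepsilon]$. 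With this choice the correction term vanishes identically, so $u_\varepsilon(t,x)=J_\varepsilon(t,x;\gamma^*)$, and the infimum is in fact attained.

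I expect the main obstacle to be the rigorous martingale step. The conditional expectation of $\int_s^T g(r,X(r))\,\mathrm{d}r$ must be computed via Fubini and the Markov property of the mild solution, which I will invoke in the form $\E[\varphi(X(r;t,x))\mid\mathcal{F}_s] = P_{s,r}[\varphi](X(s;t,x))$ for $\varphi\in\mathcal{B}_m(H)$, as in \cite{DPZa96}. This also needs joint measurability of $(r,\omega)\mapsto g(r,X(r))$, which follows from continuity. The square integrability of $M$ required for the stochastic integral follows from the polynomial growth of $u_\varepsilon$ and $g$ together with Lemma \ref{lem:moment_estimates} applied with exponent $2m$. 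If that turns out to be delicate, I would instead localize with stopping times $\tau_n$ at which $\|X\|_H$ first exits the ball of radius $n$, and then pass to the limit by dominated convergence, using the uniform moment bound on $\sup_s\|X(s)\|_H^m$.
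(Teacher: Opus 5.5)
There is a sign error in your key identity, and it breaks the logic of the final step. Taking expectations in your integration-by-parts formula, which is correct, gives $u_\varepsilon(t,x)=\E[\Gamma_T\Psi(X(T))]+\E\int_t^T\Gamma_s\big(g(s,X(s))+\gamma_s u_\varepsilon(s,X(s))\big)\,\mathrm{d}s$. Since $g+\gamma_s u_\varepsilon = f+\gamma_s\Phi-\big(\tfrac1\varepsilon(u_\varepsilon-\Phi)^+-\gamma_s(u_\varepsilon-\Phi)\big)$, the correct identity is
\begin{equation*}
u_\varepsilon(t,x)=J_\varepsilon(t,x;\gamma)-\E\left[\int_t^T\Gamma_s\Big(\tfrac{1}{\varepsilon}(u_\varepsilon-\Phi)^+-\gamma_s(u_\varepsilon-\Phi)\Big)(s,X(s))\,\mathrm{d}s\right],
\end{equation*}
with a minus sign in front of the correction term.

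Your version has a plus. From it you conclude $u_\varepsilon\ge J_\varepsilon(t,x;\gamma)$ for every $\gamma$, which is false in general. Even taken at face value, your two steps do not close. Both ``$u_\varepsilon\ge J_\varepsilon(\gamma)$ for all $\gamma$'' and ``$u_\varepsilon=J_\varepsilon(\gamma^*)$'' give only $u_\varepsilon\ge\inf_\gamma J_\varepsilon$; together they would say $u_\varepsilon=\max_\gamma J_\varepsilon$. So the step you call the reverse inequality does not reverse anything, and the bound $u_\varepsilon\le\inf_\gamma J_\varepsilon$ is never proved.

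With the sign corrected, your elementary inequality $\tfrac1\varepsilon r^+\ge\gamma r$ yields $u_\varepsilon\le J_\varepsilon(t,x;\gamma)$ for every $\gamma\in\mathcal{V}_\varepsilon$, hence $u_\varepsilon\le\inf_\gamma J_\varepsilon$. The feedback control $\gamma^*$ then gives equality and attainment, which is exactly the paper's conclusion. The rest of your setup (martingale property of $M$, product rule, measurability of $\gamma^*$) is sound.

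The repaired argument differs from the paper's only in how the discounted representation \eqref{eq:rep_uepsilon} is derived. The paper avoids stochastic integrals altogether. It computes $I=\E\int_t^T\gamma_s\Gamma_s u_\varepsilon(s,X(s))\,\mathrm{d}s$ from the conditional-expectation formula, using the tower property, Fubini and $\int_t^r\gamma_s\Gamma_s\,\mathrm{d}s=1-\Gamma_r$, which needs only the $L^1$ bounds from Lemma \ref{lem:moment_estimates}. Your product rule for $\Gamma_s u_\varepsilon(s,X(s))$ is quicker, but it requires $\int\Gamma\,\mathrm{d}M$ to be a true martingale. Your $2m$-moment argument, or the localization you mention, does handle that.
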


\begin{proof}
Evaluating the mild solution of equation \eqref{eq:hjb_penalized} along the stochastic flow and applying the Markov property yields, for any $s \in [t, T]$:
\begin{equation*}
    u_\varepsilon(s, X(s)) = \E \left[ \Psi(X(T)) + \int_s^T \left( f(r, X(r)) - \frac{1}{\varepsilon}(u_\varepsilon(r, X(r)) - \Phi(r, X(r)))^+ \right) \mathrm{d}r \;\middle|\; \mathcal{F}_s \right].
\end{equation*}
Adding the accumulated running cost up to time $s$, we introduce the compensated process:
\begin{equation*}
    N^\varepsilon_s := u_\varepsilon(s, X(s)) + \int_t^s \left( f(r, X(r)) - \frac{1}{\varepsilon}(u_\varepsilon(r, X(r)) - \Phi(r, X(r)))^+ \right) \mathrm{d}r.
\end{equation*}
Assumption \ref{ass:problem_data} and Lemma \ref{lem:moment_estimates} guarantee the integrability $\E[\sup_{s \in [t, T]} |N^\varepsilon_s|] < \infty$. Since $\E[N^\varepsilon_T \mid \mathcal{F}_s] = N^\varepsilon_s$, the process is a continuous $\F$-martingale. Substituting the terminal condition $u_\varepsilon(T, X(T)) = \Psi(X(T))$ gives:
\begin{equation} 
\label{eq:cond_exp_u}
    u_\varepsilon(s, X(s)) = \E \left[ \Psi(X(T)) + \int_s^T \left( f(r, X(r)) - \frac{1}{\varepsilon}(u_\varepsilon(r, X(r)) - \Phi(r, X(r)))^+ \right) \mathrm{d}r \;\middle|\; \mathcal{F}_s \right].
\end{equation}
To connect this representation to the penalized minimization problem, we fix an admissible control $\gamma \in \mathcal{V}_\varepsilon$ and define the integral quantity:
\begin{equation*}
    I := \E \left[ \int_t^T \gamma_s e^{-\int_t^s \gamma_q \mathrm{d}q} u_\varepsilon(s, X(s)) \, \mathrm{d}s \right].
\end{equation*}
Substituting the expression for $u_\varepsilon(s, X(s))$ from \eqref{eq:cond_exp_u} into $I$ and applying the tower property of conditional expectations yields:
\begin{equation*}
    I = \E \left[ \int_t^T \gamma_s e^{-\int_t^s \gamma_q \mathrm{d}q} \left( \Psi(X(T)) + \int_s^T \left( f(r, X(r)) - \frac{1}{\varepsilon}(u_\varepsilon(r, X(r)) - \Phi(r, X(r)))^+ \right) \mathrm{d}r \right) \mathrm{d}s \right].
\end{equation*}
Applying Fubini's Theorem and computing the inner exponential integral gives:
\begin{align*}
    &\E \left[ \int_t^T \left( \int_s^T \gamma_s e^{-\int_t^s \gamma_q \mathrm{d}q} \left( f(r, X(r)) - \frac{1}{\varepsilon}(u_\varepsilon(r, X(r)) - \Phi(r, X(r)))^+ \right) \mathrm{d}r \right) \mathrm{d}s \right] \\
    &\quad = \E \left[ \int_t^T \left( f(r, X(r)) - \frac{1}{\varepsilon}(u_\varepsilon(r, X(r)) - \Phi(r, X(r)))^+ \right) \left( \int_t^r \gamma_s e^{-\int_t^s \gamma_q \mathrm{d}q} \, \mathrm{d}s \right) \mathrm{d}r \right],
\end{align*}
where, clearly, $\int_t^r \gamma_s e^{-\int_t^s \gamma_q \mathrm{d}q} \, \mathrm{d}s=1 - e^{-\int_t^r \gamma_q \mathrm{d}q}$. Evaluating the martingale $N^\varepsilon$ at time $t$ provides the identity $u_\varepsilon(t, x) = \E[N^\varepsilon_T]$. Expanding this expectation gives:
\begin{equation} 
\label{eq:undiscounted_u}
    u_\varepsilon(t, x) = \E \left[ \Psi(X(T)) + \int_t^T \left( f(s, X(s)) - \frac{1}{\varepsilon}(u_\varepsilon(s, X(s)) - \Phi(s, X(s)))^+ \right) \mathrm{d}s \right].
\end{equation}
Subtracting the integral $I$ from \eqref{eq:undiscounted_u} recovers the exponential discounting associated with the penalized minimization problem, yielding:
\begin{align*}
    u_\varepsilon(t, x) - I &= \E \Bigg[ e^{-\int_t^T \gamma_q \mathrm{d}q} \Psi(X(T))+ \int_t^T e^{-\int_t^s \gamma_q \mathrm{d}q} \left( f(s, X(s)) - \frac{1}{\varepsilon}(u_\varepsilon(s, X(s)) - \Phi(s, X(s)))^+ \right) \mathrm{d}s \Bigg].
\end{align*}
Exploiting the linearity of the expectation the following representation for $u_\varepsilon(t, x)$ is obtained:
\begin{equation}\label{eq:rep_uepsilon}
 \E \Bigg[ e^{-\int_t^T \gamma_q \mathrm{d}q} \Psi(X(T)) + \int_t^T e^{-\int_t^s \gamma_q \mathrm{d}q} \Big( f(s, X(s)) - \frac{1}{\varepsilon}(u_\varepsilon(s, X(s)) - \Phi(s, X(s)))^+ + \gamma_s u_\varepsilon(s, X(s)) \Big) \mathrm{d}s \Bigg].
\end{equation}
We observe that the admissible control satisfies $\gamma_s \in [0, 1/\varepsilon]$ $\Prob$-a.s., ensuring the following inequality:
\begin{equation*}
    \gamma_s u_\varepsilon - \frac{1}{\varepsilon}(u_\varepsilon - \Phi)^+ \le \gamma_s u_\varepsilon - \gamma_s(u_\varepsilon - \Phi) =\gamma_s \Phi.
\end{equation*}
Applying this bound to the representation of $u_\varepsilon(t, x)$ obtained in \eqref{eq:rep_uepsilon} yields the penalized cost functional introduced in \eqref{eq:randomized_cost}:
\begin{equation*}
    u_\varepsilon(t, x) \le \E \left[ \int_t^T e^{-\int_t^s \gamma_q \mathrm{d}q} \Big( f(s, X(s)) + \gamma_s \Phi(s, X(s)) \Big) \mathrm{d}s + e^{-\int_t^T \gamma_q \mathrm{d}q} \Psi(X(T)) \right] \equiv J_\varepsilon(t, x; \gamma).
\end{equation*}
Finally, to prove that this lower bound is attained, we define the specific measurable control process $\gamma^* \in \mathcal{V}_\varepsilon$:
\begin{equation*}
    \gamma^*_s := \frac{1}{\varepsilon} \ind_{\{u_\varepsilon(s, X(s)) > \Phi(s, X(s))\}}.
\end{equation*}
Evaluating the expression with $\gamma^*$ yields the pointwise identity $\gamma^*_s u_\varepsilon - \frac{1}{\varepsilon}(u_\varepsilon - \Phi)^+ = \gamma^*_s \Phi$, $\mathbb{P}$-a.s. Consequently, the lower bound is achieved, proving $u_\varepsilon(t, x) = J_\varepsilon(t, x; \gamma^*)$.
\end{proof}

\subsection{Weighted Equi-Lipschitz Continuity}
\label{sec:equi_lipschitz}

We establish the uniform equi-Lipschitz continuity of the penalized family $\{u_\varepsilon\}_{\varepsilon > 0}$ in the weighted polynomial topology. This property is derived exclusively via probabilistic arguments, without requiring any smoothing assumptions on the transition semigroup.

We start with the following useful Lemma.
\begin{lemma}[Mixed Moment Bound]
\label{lem:mixed_moment}
Let Assumption \ref{ass:coefficients} hold. Define the \emph{spatial variation weight} $W: H \times H \to \R$ by:
\begin{equation}
\label{eq:spatial_weight}
    W(x, y) := (1 + \|x\|_H^{m-1} + \|y\|_H^{m-1})\|x - y\|_H.
\end{equation}
Then there exists a constant $M_2 > 0$, depending only on $T$, $m$ and the Lipschitz constants, such that for all $x, y \in H$ and $t \in [0, T]$, the mild solutions \eqref{eq:mild_solution_eq} satisfy:
\begin{equation}
\label{eq:mixed_moment}
    \E \left[ \sup_{s \in [t, T]} W\big( X(s; t, x), X(s; t, y) \big) \right] \le M_2 W(x, y).
\end{equation}
\end{lemma}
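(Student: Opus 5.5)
The plan is to bound $\sup_s W(X(s;t,x),X(s;t,y))$ by a product of two factors, apply Cauchy--Schwarz, and estimate each factor with a moment bound. Set $X^x(s):=X(s;t,x)$, $X^y(s):=X(s;t,y)$ and $\Delta(s):=X^x(s)-X^y(s)$. Pointwise in $\omega$,
\begin{equation*}
\sup_{s\in[t,T]} W\big(X^x(s),X^y(s)\big) \le \Big(1+\sup_{s}\|X^x(s)\|_H^{m-1}+\sup_s\|X^y(s)\|_H^{m-1}\Big)\,\sup_{s}\|\Delta(s)\|_H .
\end{equation*}
Cauchy--Schwarz then gives
\begin{equation*}
\E\big[\sup_s W\big] \le \Big(\E\big[(1+\sup_s\|X^x\|_H^{m-1}+\sup_s\|X^y\|_H^{m-1})^2\big]\Big)^{1/2}\Big(\E\big[\sup_s\|\Delta(s)\|_H^2\big]\Big)^{1/2}.
\end{equation*}
The first factor is handled by Lemma \ref{lem:moment_estimates} with exponent $2(m-1)$. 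When $m\ge 2$ this exponent is at least $2$, so the lemma applies directly. When $m\in[1,2)$ we use Jensen's inequality and the exponent-$2$ case. The result is that the first factor is at most $C(1+\|x\|_H^{m-1}+\|y\|_H^{m-1})$, with $C$ depending only on $m$, $T$ and the constants of Assumption \ref{ass:coefficients}.

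The main work is the second factor, a pathwise Lipschitz estimate for the flow:
\begin{equation*}
\E\big[\sup_{s\in[t,T]}\|\Delta(s)\|_H^2\big]\le C\|x-y\|_H^2 .
\end{equation*}
Subtracting the two mild equations \eqref{eq:mild_solution_eq} gives
\begin{equation*}
\Delta(s)=e^{(s-t)A}(x-y)+\int_t^s e^{(s-r)A}\big(b(r,X^x)-b(r,X^y)\big)\,\mathrm{d}r+\int_t^s e^{(s-r)A}\big(\sigma(r,X^x)-\sigma(r,X^y)\big)\,\mathrm{d}W(r).
\end{equation*}
The deterministic and drift terms are controlled directly, using $\|e^{\tau A}\|\le Me^{\omega\tau}$ and the Lipschitz bound in Assumption \ref{ass:coefficients}(ii). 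The stochastic convolution is the delicate part, because a supremum of a stochastic convolution is not directly a martingale quantity. We invoke the maximal inequality for stochastic convolutions with $C_0$-semigroups, e.g.\ via the factorization method of Da Prato--Zabczyk (the same tool behind \cite[Theorem 7.2]{DPZa14}). This yields
\begin{equation*}
\E\sup_{r\le s}\Big\|\int_t^r e^{(r-q)A}\Phi(q)\,\mathrm{d}W(q)\Big\|_H^p\le C_{p,T}\int_t^s\E\|\Phi(q)\|_{\mathcal{L}_2}^p\,\mathrm{d}q
\end{equation*}
for some $p>2$.

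Because of the restriction $p>2$, we will carry out the whole difference estimate in $L^p$ with such a $p$, for instance $p=4$. Writing $\phi(s):=\E\sup_{r\le s}\|\Delta(r)\|_H^p$, the three estimates combine to
\begin{equation*}
\phi(s)\le C\|x-y\|_H^p+C\int_t^s\phi(r)\,\mathrm{d}r .
\end{equation*}
We first check that $\phi$ is finite, which follows from Lemma \ref{lem:moment_estimates}. Gronwall's lemma then gives $\phi(T)\le C\|x-y\|_H^p$, and Jensen's inequality returns the $L^2$ bound.

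Multiplying the two factor bounds gives $\E[\sup_s W]\le M_2\,W(x,y)$, where $M_2$ is the product of the two constants and so depends only on $T$, $m$, $M$, $\omega$ and the Lipschitz and growth constants. I expect the main obstacle to be the supremum inside the expectation for the stochastic convolution term. If the factorization method is not to be invoked as a black box, a fallback is to assume or reduce to a quasi-contractive semigroup. Another is to approximate $A$ by its Yosida approximations, apply Burkholder--Davis--Gundy and Itô's formula to $\|\Delta\|^2$ for the approximating strong solutions, and pass to the limit.
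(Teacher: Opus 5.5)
Your proposal is correct and follows essentially the same route as the paper's one-line sketch. You split off the polynomial factor with Cauchy--Schwarz (the paper's H\"older step), control it by Lemma \ref{lem:moment_estimates}, prove an $L^p$ Lipschitz estimate for the flow via a maximal inequality for the stochastic convolution, and close with Gronwall. The one refinement is that you use the factorization-based maximal inequality (with $p>2$, then Jensen back to $L^2$) rather than a bare appeal to Burkholder--Davis--Gundy. This is the more appropriate tool, since for a general $C_0$-semigroup the stochastic convolution is not a martingale.
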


\begin{proof}
    The proof is a consequence of the Lipschitz continuity of the coefficients (Assumption \ref{ass:coefficients}), Hölder's inequality, Burkholder-Davis-Gundy inequality and Gronwall's Lemma.
\end{proof}

Now, we exploit the penalized representation to derive uniform spatial variation bounds for the penalized sequence.

\begin{theorem}[Uniform Lipschitz Envelope] 
\label{thm:uniform_lipschitz}
Let Assumptions \ref{ass:coefficients} and \ref{ass:problem_data} hold. For any penalization parameter $\varepsilon > 0$, the parabolic mild solution $u_\varepsilon$ to the penalized backward equation \eqref{eq:hjb_penalized} is locally Lipschitz continuous in space. Moreover, there exists a continuous envelope $K(t) > 0$, independent of $\varepsilon$, such that for all $x, y \in H$ and $t \in [0, T]$:
\begin{equation} 
\label{eq:uniform_lipschitz}
    |u_\varepsilon(t, x) - u_\varepsilon(t, y)| \le K(t) W(x, y),
\end{equation}
where $W(x, y)$ is the spatial variation weight defined in \eqref{eq:spatial_weight}.
\end{theorem}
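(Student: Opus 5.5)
The plan is to exploit the control representation of Theorem \ref{thm:feynman_kac}. The key observation is that the infimum of a family of functions that are uniformly Lipschitz with respect to the weight $W$ is itself Lipschitz with the same constant. Concretely, for fixed $(t,x,y)$ and any $\gamma \in \mathcal{V}_\varepsilon$ we have
\[
|u_\varepsilon(t,x) - u_\varepsilon(t,y)| \le \sup_{\gamma \in \mathcal{V}_\varepsilon} |J_\varepsilon(t,x;\gamma) - J_\varepsilon(t,y;\gamma)|,
\]
which follows from $u_\varepsilon(t,x) \le J_\varepsilon(t,x;\gamma) \le J_\varepsilon(t,y;\gamma) + \sup_{\gamma}|J_\varepsilon(t,x;\gamma)-J_\varepsilon(t,y;\gamma)|$ by taking the infimum over $\gamma$ and then symmetrizing. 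It therefore suffices to bound the difference of the penalized costs uniformly in $\gamma$ and in $\varepsilon$. The essential point is that the same control process $\gamma$ (progressively measurable, defined on $\Omega$) is used for both initial states; this is legitimate because both flows $X(\cdot;t,x)$ and $X(\cdot;t,y)$ are driven by the same Wiener process on the same filtered space.

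For fixed $\gamma$, I write $X^x = X(\cdot;t,x)$, $X^y = X(\cdot;t,y)$ and $D_s := e^{-\int_t^s\gamma_r\,\mathrm{d}r} \in [0,1]$. Then
\[
|J_\varepsilon(t,x;\gamma)-J_\varepsilon(t,y;\gamma)| \le \E\Big[\int_t^T D_s\big(|f(s,X^x_s)-f(s,X^y_s)| + \gamma_s|\Phi(s,X^x_s)-\Phi(s,X^y_s)|\big)\,\mathrm{d}s + D_T|\Psi(X^x_T)-\Psi(X^y_T)|\Big].
\]
The $f$ and $\Psi$ terms are immediate. By Assumption \ref{ass:problem_data}, each is bounded by $L\,W(X^x_s,X^y_s)$ pointwise. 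Using $D_s \le 1$ and Lemma \ref{lem:mixed_moment}, these terms contribute at most $L(T-t+1)M_2\,W(x,y)$.

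The main obstacle is the obstacle term, because $\gamma_s$ can be as large as $1/\varepsilon$ and a naive bound would blow up as $\varepsilon \to 0$. The remedy is to keep the discount inside the estimate and use the identity $\gamma_s D_s = -\frac{\mathrm{d}}{\mathrm{d}s}D_s$, so that $\int_t^T \gamma_s D_s\,\mathrm{d}s = 1 - D_T \le 1$ pathwise. Combining this with the pathwise bound $|\Phi(s,X^x_s)-\Phi(s,X^y_s)| \le L\sup_{r\in[t,T]}W(X^x_r,X^y_r)$, I obtain
\[
\int_t^T \gamma_s D_s\,|\Phi(s,X^x_s)-\Phi(s,X^y_s)|\,\mathrm{d}s \le L\sup_{r\in[t,T]}W(X^x_r,X^y_r).
\]
Taking expectations and applying Lemma \ref{lem:mixed_moment} (which controls exactly the supremum in time, and this is why it was stated with a sup) gives the bound $LM_2\,W(x,y)$, independent of $\gamma$ and $\varepsilon$.

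Summing the three contributions yields \eqref{eq:uniform_lipschitz} with $K(t) := LM_2(T-t+2)$, which is continuous, positive, and independent of $\varepsilon$; if $M_2$ is taken uniform in $t$, one may even take a constant. Local Lipschitz continuity of each $u_\varepsilon$ then follows directly, since $W(x,y)$ is bounded by $(1+2R^{m-1})\|x-y\|_H$ on balls of radius $R$. The only measure-theoretic point to check is that the expectations are finite, which follows from Proposition \ref{prop:abs_integrability} and Lemma \ref{lem:moment_estimates}.
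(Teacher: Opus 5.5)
Your proposal is correct and follows the paper's proof essentially step for step: the control representation of Theorem~\ref{thm:feynman_kac}, the non-expansiveness of the infimum with the same control $\gamma$ used for both initial states, the identity $\int_t^T \gamma_s D_s\,\mathrm{d}s = 1 - D_T$ to neutralize the $1/\varepsilon$-sized obstacle weight, and Lemma~\ref{lem:mixed_moment} to control the time-supremum. The only difference is cosmetic: the paper bounds the obstacle and terminal contributions together as $(1-D_T) + D_T = 1$, which gives $K(t) = LM_2(T-t+1)$ instead of your $LM_2(T-t+2)$.
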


\begin{proof}
By Theorem \ref{thm:feynman_kac}, the penalized solution is the value function of the penalized minimization problem:
\begin{equation*}
    u_\varepsilon(t, x) = \inf_{\gamma \in \mathcal{V}_\varepsilon} J_\varepsilon(t, x; \gamma).
\end{equation*}
The non-expansive property of the infimum allows us to bound the spatial difference by the supremum of the cost functional differences:
\begin{equation}
\label{eq:proof_inf_nonexpansive}
    |u_\varepsilon(t, x) - u_\varepsilon(t, y)| \le \sup_{\gamma \in \mathcal{V}_\varepsilon} |J_\varepsilon(t, x; \gamma) - J_\varepsilon(t, y; \gamma)|.
\end{equation}
Let $A_s := \int_t^s \gamma_r \, \mathrm{d}r$ be the cumulative control process. By the linearity of the expectation and the triangle inequality, the difference between cost functionals satisfies:
\begin{align*}
\label{eq:proof_cost_diff_bound}
    |J_\varepsilon(t, x; \gamma) - J_\varepsilon(t, y; \gamma)| &\le \E \Bigg[ \int_t^T e^{-A_s} |f(s, X(s; t, x)) - f(s, X(s; t, y))| \, \mathrm{d}s \nonumber \\
    & \quad + \int_t^T \gamma_s e^{-A_s} |\Phi(s, X(s; t, x)) - \Phi(s, X(s; t, y))| \, \mathrm{d}s + e^{-A_T} |\Psi(X(T; t, x)) - \Psi(X(T; t, y))| \Bigg].
\end{align*}
Under Assumption \ref{ass:problem_data}, $f, \Phi$, and $\Psi$ are Lipschitz continuous in space with a uniform constant $L$. Defining $W^* := \sup_{s \in [t, T]} W(X(s; t, x), X(s; t, y))$ and factoring the Lipschitz constants yields:
\begin{equation}
\label{eq:proof_factored_cost_diff}
    |J_\varepsilon(t, x; \gamma) - J_\varepsilon(t, y; \gamma)| \le \E \left[ W^* L\left( \int_t^T e^{-A_s} \, \mathrm{d}s + \int_t^T \gamma_s e^{-A_s} \, \mathrm{d}s + e^{-A_T} \right) \right].
\end{equation}
Since $\int_t^T e^{-A_s} \, \mathrm{d}s \le (T-t)$ and $\int_t^T \gamma_s e^{-A_s} \, \mathrm{d}s = 1 - e^{-A_T}$, the terminal term $e^{-A_T}$ cancels exactly, providing:
\begin{equation}
\label{eq:proof_pre_mixed_moment}
    |J_\varepsilon(t, x; \gamma) - J_\varepsilon(t, y; \gamma)| \le L(T-t+1) \E [W^*].
\end{equation}
Applying the mixed moment bound $\E[W^*] \le M_2 W(x, y)$ from Lemma \ref{lem:mixed_moment} and defining the temporal envelope $K(t) := M_2 L (T-t+1)$, we obtain:
\begin{equation*}
    |J_\varepsilon(t, x; \gamma) - J_\varepsilon(t, y; \gamma)| \le K(t) W(x, y).
\end{equation*}
The bound holds uniformly for all $\gamma \in \mathcal{V}_\varepsilon$, establishing the Lipschitz continuity of $u_\varepsilon$ with respect to the weight $W$.
\end{proof}
\section{Convergence of the Penalized Solutions and Verification}
\noindent In this section we investigate the asymptotic convergence of the family of penalized solutions as the regularization parameter $\varepsilon$ vanishes. In particular, we prove that the limit of the solutions to the penalized equations \eqref{eq:hjb_penalized} coincide with the unique mild solution to the variational inequality. Moreover, we show that we can identify this solution with the value function of the optimal stopping problem.

\subsection{The Lower Bound}
We derive a uniform lower bound for the expected cost, establishing a time dependent bound that is independent of both the control $\gamma$ and the penalization parameter $\varepsilon$.

To establish this result, we first introduce a deterministic dynamic barrier governed by the problem's data.
\begin{definition}[Dynamic Barrier]
\label{def:dynamic_barrier}
Let $K_m \ge 1$ denote the uniform constant associated with the parabolic maximal inequality \eqref{eq:moment_estimates_eq} on $H$. The \emph{dynamic bounding barrier} $Z_{low}: [0, T] \times H \to \R$ is defined by:
\begin{equation} 
\label{eq:z_low}
    Z_{low}(t, x) := - K_m \Big( \max\left(\|\Phi\|_{\mathcal{B}_{m, T}}, \|\Psi\|_{\mathcal{B}_m}\right) + (T-t)\|f\|_{\mathcal{B}_{m, T}} \Big) w_m(x).
\end{equation}
\end{definition}

Next, we derive a pathwise integral estimate that controls the cost components independently of the chosen control.
\begin{lemma}[Pathwise Bound]
\label{lem:pathwise_bound}
Let Assumption \ref{ass:problem_data} hold. Define the cumulative control process $A_s := \int_t^s \gamma_r \, \mathrm{d}r$. For any progressively measurable control $\gamma \in \mathcal{V}_\varepsilon$ (Definition \ref{def:randomized_control}), the following pathwise integral bound holds $\mathbb{P}$-a.s.:
\begin{equation*}
    \int_t^T e^{-A_s} \|f\|_{\mathcal{B}_{m, T}} \, \mathrm{d}s + \int_t^T \gamma_s e^{-A_s} \|\Phi\|_{\mathcal{B}_{m, T}} \, \mathrm{d}s + e^{-A_T} \|\Psi\|_{\mathcal{B}_m} \le (T-t)\|f\|_{\mathcal{B}_{m, T}} + \max\left(\|\Phi\|_{\mathcal{B}_{m, T}}, \|\Psi\|_{\mathcal{B}_m}\right).
\end{equation*}
\end{lemma}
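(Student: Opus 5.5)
The argument is purely deterministic and pathwise: we fix $\omega$ outside a null set where $\gamma_s(\omega)\in[0,1/\varepsilon]$ for almost every $s$. On this set $s\mapsto A_s$ is nondecreasing, absolutely continuous, $A_t=0$, with $\frac{\mathrm{d}}{\mathrm{d}s}e^{-A_s} = -\gamma_s e^{-A_s}$ for a.e. $s$. The three terms on the left-hand side are treated separately, with the norms $\|f\|_{\mathcal{B}_{m,T}}$, $\|\Phi\|_{\mathcal{B}_{m,T}}$, $\|\Psi\|_{\mathcal{B}_m}$ regarded as nonnegative constants.

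For the running-cost term, $0<e^{-A_s}\le 1$ because $A_s\ge 0$. Hence
\[
\int_t^T e^{-A_s}\|f\|_{\mathcal{B}_{m,T}}\,\mathrm{d}s\le (T-t)\|f\|_{\mathcal{B}_{m,T}}.
\]

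For the remaining two terms, the fundamental theorem of calculus for absolutely continuous functions gives the identity already used in the proof of Theorem \ref{thm:uniform_lipschitz}:
\[
\int_t^T \gamma_s e^{-A_s}\,\mathrm{d}s = 1-e^{-A_T}.
\]
Setting $\lambda:=e^{-A_T}\in(0,1]$, the sum of the obstacle and terminal contributions equals
\[
(1-\lambda)\|\Phi\|_{\mathcal{B}_{m,T}}+\lambda\|\Psi\|_{\mathcal{B}_m}.
\]
This is a convex combination of the two norms, so it is bounded by $\max(\|\Phi\|_{\mathcal{B}_{m,T}},\|\Psi\|_{\mathcal{B}_m})$. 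Adding this to the running-cost estimate gives the claimed bound, almost surely.

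No step is a genuine obstacle. The only point needing care is justifying the exact integral identity for a merely progressively measurable, bounded $\gamma$. This follows because $A_s$ is Lipschitz in $s$ with constant $1/\varepsilon$, so $e^{-A_s}$ is absolutely continuous with the stated a.e. derivative. It is also worth noting that the argument uses the nonnegativity of the three norms, which holds trivially.
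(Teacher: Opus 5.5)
Your proposal is correct and follows essentially the same route as the paper: you bound $e^{-A_s}\le 1$ in the running-cost term, use the exact identity $\int_t^T\gamma_s e^{-A_s}\,\mathrm{d}s = 1-e^{-A_T}$, and note that the obstacle and terminal contributions form a convex combination bounded by $\max(\|\Phi\|_{\mathcal{B}_{m,T}},\|\Psi\|_{\mathcal{B}_m})$. Your justification of that identity through the absolute continuity of $s\mapsto e^{-A_s}$ is a detail the paper leaves implicit.
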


\begin{proof}
Since $\gamma_s$ is the derivative of the cumulative process $A_s := \int_t^s \gamma_r \, \mathrm{d}r$, direct integration over $[t, T]$ yields:
\begin{equation*}
    \int_t^T \gamma_s e^{-A_s} \|\Phi\|_{\mathcal{B}_{m, T}} \, \mathrm{d}s = \|\Phi\|_{\mathcal{B}_{m, T}} \left( 1 - e^{-A_T} \right).
\end{equation*}
Since the admissible control is non-negative $\mathbb{P}$-a.s., $A_s \ge 0$ and $e^{-A_s} \le 1$. Applying this pointwise bound to the running cost provides:
\begin{equation*}
    \int_t^T e^{-A_s} \|f\|_{\mathcal{B}_{m, T}} \, \mathrm{d}s \le (T-t)\|f\|_{\mathcal{B}_{m, T}}.
\end{equation*}
Defining the maximal envelope $M_\Phi^\Psi := \max\left(\|\Phi\|_{\mathcal{B}_{m, T}}, \|\Psi\|_{\mathcal{B}_m}\right)$, we note that $e^{-A_T} \in [0, 1]$ $\mathbb{P}$-a.s. Consequently, the obstacle and terminal components form a convex combination strictly bounded by $M_\Phi^\Psi$:
\begin{equation*}
    \|\Phi\|_{\mathcal{B}_{m, T}} \left( 1 - e^{-A_T} \right) + \|\Psi\|_{\mathcal{B}_m} e^{-A_T} \le M_\Phi^\Psi.
\end{equation*}
Summing this estimate with the running cost bound establishes the desired pathwise inequality.
\end{proof}

Exploiting this pathwise estimate, we now prove that the family of penalized solutions is uniformly bounded from below by the dynamic barrier.
\begin{theorem}[Uniform Lower Boundedness]
\label{thm:uniform_lower_bound}
Let Assumptions \ref{ass:coefficients} and \ref{ass:problem_data} hold. The mild solution $u_\varepsilon$ to the penalized backward equation \eqref{eq:hjb_penalized} is uniformly bounded from below by the dynamic barrier $Z_{low}$ defined in \eqref{eq:z_low}:
\begin{equation} 
\label{eq:uniform_lower_bound}
    u_\varepsilon(t, x) \ge Z_{low}(t, x), \quad \forall (t, x) \in [0, T] \times H.
\end{equation}
\end{theorem}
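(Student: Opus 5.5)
The plan is to go through the probabilistic representation of Theorem \ref{thm:feynman_kac}. This gives $u_\varepsilon(t,x)=\inf_{\gamma\in\mathcal{V}_\varepsilon}J_\varepsilon(t,x;\gamma)$, so it suffices to show that $J_\varepsilon(t,x;\gamma)\ge Z_{low}(t,x)$ for every admissible control $\gamma$. This bound does not depend on $\gamma$ or $\varepsilon$, and passing to the infimum then gives \eqref{eq:uniform_lower_bound}.

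Fix $\gamma\in\mathcal{V}_\varepsilon$ and set $A_s:=\int_t^s\gamma_r\,\mathrm{d}r$.

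\textbf{Step 1: pointwise weighted bounds.} By definition of the weighted norms,
\[
f(s,X(s))\ge -\|f\|_{\mathcal{B}_{m,T}}\,w_m(X(s)),\qquad \Phi(s,X(s))\ge -\|\Phi\|_{\mathcal{B}_{m,T}}\,w_m(X(s)),\qquad \Psi(X(T))\ge -\|\Psi\|_{\mathcal{B}_m}\,w_m(X(T)).
\]
The weights $e^{-A_s}$, $\gamma_s e^{-A_s}$ and $e^{-A_T}$ are all nonnegative, since $\gamma_s\in[0,1/\varepsilon]$. Hence each integrand in \eqref{eq:randomized_cost} is bounded below pathwise by minus the corresponding norm times the weight.

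\textbf{Step 2: replace the weights by a single random factor.} Bound each $w_m(X(s))$ and $w_m(X(T))$ by
\[
w^*:=\sup_{r\in[t,T]}w_m(X(r))=1+\sup_{r\in[t,T]}\|X(r)\|_H^m .
\]
This yields the pathwise inequality
\[
J\text{-integrand}\ \ge\ -w^*\Big(\int_t^T e^{-A_s}\|f\|\,\mathrm{d}s+\int_t^T\gamma_s e^{-A_s}\|\Phi\|\,\mathrm{d}s+e^{-A_T}\|\Psi\|\Big),
\]
with the appropriate norms in each term. Lemma \ref{lem:pathwise_bound} bounds the bracket by the deterministic quantity $(T-t)\|f\|_{\mathcal{B}_{m,T}}+\max(\|\Phi\|_{\mathcal{B}_{m,T}},\|\Psi\|_{\mathcal{B}_m})$, uniformly in $\gamma$ and $\varepsilon$.

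\textbf{Step 3: take expectations.} Taking expectations and applying Lemma \ref{lem:moment_estimates} gives $\E[w^*]\le 1+K_m(1+\|x\|_H^m)$. Since the barrier is stated with the single factor $K_m w_m(x)$, this requires $\E[w^*]\le K_m w_m(x)$. That holds provided $K_m$ is taken large enough, for instance by replacing $K_m$ with $K_m+1$ in \eqref{eq:moment_estimates_eq}. This normalization is consistent with the convention $K_m\ge1$ in Definition \ref{def:dynamic_barrier} and with the estimate $P_{t,s}[w_m]\le K_m w_m$ already used in the paper. This yields $J_\varepsilon(t,x;\gamma)\ge Z_{low}(t,x)$.

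The only delicate point is the interchange in Step 2: pulling the random weight $w^*$ out of the integrals before applying Lemma \ref{lem:pathwise_bound}. That lemma must be applied pathwise, with $w^*$ fixed, before taking expectations. Integrability of all terms, needed to split expectations, follows from Proposition \ref{prop:abs_integrability}. The lemma handles the correlation between $\gamma$ and $X$ because it is deterministic given the path.
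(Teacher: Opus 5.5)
Your proposal is correct and follows the paper's proof step for step: the representation $u_\varepsilon=\inf_\gamma J_\varepsilon$ from Theorem \ref{thm:feynman_kac}, the pathwise weighted lower bounds (using $\gamma_s\ge 0$), pulling out $\sup_{r\in[t,T]}w_m(X(r))$, Lemma \ref{lem:pathwise_bound} applied pathwise, the maximal moment estimate, and finally the infimum over $\gamma$. Your remark that $\E[\sup_{r} w_m(X(r))]\le K_m w_m(x)$ requires enlarging the constant of Lemma \ref{lem:moment_estimates} (e.g.\ to $K_m+1$) is a valid refinement of a step the paper skips: it cites $P_{t,s}[w_m]\le K_m w_m$, which controls $\E[w_m(X(s))]$ only at a fixed time $s$, not the supremum over $s$.
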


\begin{proof}
For $g \in \{f, \Phi\}$, $g(s, y) \ge -\|g\|_{\mathcal{B}_{m, T}} w_m(y)$ and $\Psi(y) \ge -\|\Psi\|_{\mathcal{B}_m} w_m(y)$. Since $\gamma_s \ge 0$, substituting into the penalized cost representation yields:
\begin{equation*}
    J_\varepsilon(t, x; \gamma) \ge - \E \Bigg[ \int_t^T e^{-A_s} \Big( \|f\|_{\mathcal{B}_{m, T}} + \gamma_s \|\Phi\|_{\mathcal{B}_{m, T}} \Big) w_m(X(s;t,x)) \, \mathrm{d}s + e^{-A_T} \|\Psi\|_{\mathcal{B}_m} w_m(X(T;t,x)) \Bigg].
\end{equation*}
Let $W^* := \sup_{r \in [t, T]} w_m(X(r;t,x))$. Bounding $w_m(X(s;t,x))$ and $w_m(X(T;t,x))$ by $W^*$ and applying Lemma \ref{lem:pathwise_bound} gives:
\begin{align*}
    J_\varepsilon(t, x; \gamma) &\ge - \E \left[ W^* \left( \int_t^T e^{-A_s} \|f\|_{\mathcal{B}_{m, T}} \, \mathrm{d}s + \int_t^T \gamma_s e^{-A_s} \|\Phi\|_{\mathcal{B}_{m, T}} \, \mathrm{d}s + e^{-A_T} \|\Psi\|_{\mathcal{B}_m} \right) \right] \\
    &\ge - \Big( (T-t)\|f\|_{\mathcal{B}_{m, T}} + \max\left(\|\Phi\|_{\mathcal{B}_{m, T}}, \|\Psi\|_{\mathcal{B}_m}\right) \Big) \E [ W^* ]. 
\end{align*}
By the moment estimate $P_{t,s}[w_m](x) \le K_m w_m(x)$, we have $\E[W^*] \le K_m w_m(x)$, thus:
\begin{equation*}
    J_\varepsilon(t, x; \gamma) \ge Z_{low}(t, x).
\end{equation*}
Taking the infimum over $\mathcal{V}_\varepsilon$ yields $u_\varepsilon(t, x) \ge Z_{low}(t, x)$.
\end{proof}
\subsection{Comparison Principle and Monotonicity}
\label{sec:comparison}

The monotonicity of the penalized solutions with respect to $\varepsilon$ is established here. Since the lack of global monotonicity in the mapping $u \mapsto -(u-\Phi)^+$ precludes a direct comparison via standard integral equations, we employ the shifted mild formulation from Theorem \ref{thm:equivalence_mild}. This approach allows the construction of an order-preserving operator, providing a rigorous basis for the monotonic convergence.

\begin{lemma}[Order-Preserving Property]
\label{lem:order_preserving}
Let Assumptions \ref{ass:coefficients} and \ref{ass:problem_data} hold. For any shift parameter satisfying $L \ge 1/\varepsilon$, the nonlinear shifted integral operator $\mathcal{M}_{\varepsilon, L}$ defined in Definition \ref{def:shifted_operator} is order-preserving on the measurable weighted space $\mathcal{B}_m([0, T] \times H)$. That is, if $u, v \in \mathcal{B}_m([0, T] \times H)$ satisfy $u(s, x) \le v(s, x)$ for all $(s, x) \in [0, T] \times H$, then $\mathcal{M}_{\varepsilon, L}(u)(s, x) \le \mathcal{M}_{\varepsilon, L}(v)(s, x)$ for all $(s, x) \in [0, T] \times H$.
\end{lemma}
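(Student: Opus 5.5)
The plan is to reduce the claim to a pointwise monotonicity statement for a scalar map, and then to use the positivity of the transition semigroup. The terminal term $e^{-L(T-t)}P_{t,T}[\Psi](x)$ does not depend on $u$, so it cancels when we take the difference $\mathcal{M}_{\varepsilon,L}(v)-\mathcal{M}_{\varepsilon,L}(u)$. By linearity of $P_{t,s}$ and of the time integral, we obtain
\begin{equation*}
\mathcal{M}_{\varepsilon,L}(v)(t,x)-\mathcal{M}_{\varepsilon,L}(u)(t,x)=\int_t^T e^{-L(s-t)}P_{t,s}\big[g_{s}(v(s,\cdot))-g_{s}(u(s,\cdot))\big](x)\,\mathrm{d}s,
\end{equation*}
where, for fixed $(s,y)$, we write $g_s(r)(y):=L r-\frac{1}{\varepsilon}(r-\Phi(s,y))^+$, with the $f$ term cancelled. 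Each term is well defined in $\mathcal{B}_m$ by Remark \ref{rem:topological_invariance}.

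The key step is to show that, for every constant $c\in\R$ and every $L\ge 1/\varepsilon$, the real map $h(r):=Lr-\frac1\varepsilon(r-c)^+$ is nondecreasing. This map is piecewise linear, with slope $L$ on $(-\infty,c]$ and slope $L-1/\varepsilon\ge 0$ on $(c,\infty)$. It is continuous, so it is nondecreasing on all of $\R$. Applying this with $c=\Phi(s,y)$ and $r_1=u(s,y)\le r_2=v(s,y)$ shows that the integrand $g_s(v(s,y))-g_s(u(s,y))$ is a nonnegative Borel function of $y$. It is also bounded by $2L\|u-v\|_{\mathcal{B}_{m,T}}w_m(y)$, using the $L$-Lipschitz bound \eqref{eq:lipschitz_bound}, so it belongs to $\mathcal{B}_m(H)$.

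Positivity of $P_{t,s}$ follows directly from Definition \ref{def:evolution_operators}: $P_{t,s}[\varphi](x)=\E[\varphi(X(s;t,x))]\ge0$ whenever $\varphi\ge0$. Hence the integrand in $s$ is nonnegative, the weight $e^{-L(s-t)}$ is positive, and the integral over $[t,T]$ is nonnegative. This gives $\mathcal{M}_{\varepsilon,L}(u)\le\mathcal{M}_{\varepsilon,L}(v)$ pointwise on $[0,T]\times H$.

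The only point needing care is the measurability and integrability that justify splitting the difference under $P_{t,s}$ and the $\mathrm{d}s$-integral. The weighted bound above together with Lemma \ref{lem:moment_estimates} gives this, so I expect no real obstacle. The essential observation is simply the threshold $L\ge1/\varepsilon$ in the monotonicity of $h$.
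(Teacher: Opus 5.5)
Your proposal is correct and takes essentially the same approach as the paper. The paper obtains monotonicity of $r\mapsto Lr-\frac{1}{\varepsilon}(r-c)^+$ from the $1$-Lipschitz property of the positive part, via the lower bound $(L-\frac{1}{\varepsilon})(r_2-r_1)\ge 0$, whereas you read it off the slopes of the piecewise linear map; both arguments then conclude using positivity of $P_{t,s}$ and the positive exponential weight in the time integral.
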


\begin{proof}
For an arbitrary $(s, x) \in [0, T] \times H$, let $r_1 := u(s, x)$, $r_2 := v(s, x)$, and $c := \Phi(s, x)$. By hypothesis, $r_1 \le r_2$. Since the mapping $z \mapsto z^+$ is $1$-Lipschitz continuous, the difference of the shifted penalization terms satisfies:
\begin{align}
\label{eq:proof_shifted_diff}
    \Big( L r_2 - \frac{1}{\varepsilon} (r_2 - c)^+ \Big) - \Big( L r_1 - \frac{1}{\varepsilon} (r_1 - c)^+ \Big) &\ge L(r_2 - r_1) - \frac{1}{\varepsilon}|r_2 - r_1| = \left(L - \frac{1}{\varepsilon}\right)(r_2 - r_1).
\end{align}
Since $L \ge 1/\varepsilon$ and $r_1 \le r_2$, the right-hand side of \eqref{eq:proof_shifted_diff} is non-negative. Rearranging yields the pointwise ordering:
\begin{equation}
\label{eq:proof_pointwise_order}
    L u(s, x) - \frac{1}{\varepsilon} (u(s, x) - \Phi(s, x))^+ \le L v(s, x) - \frac{1}{\varepsilon} (v(s, x) - \Phi(s, x))^+.
\end{equation}
Adding the running cost $f$, applying the positivity-preserving transition evolution family $P_{t,s}$, and integrating against the exponential weight $e^{-L(s-t)}$ over $[t, T]$ preserves the inequality. Adding the identical terminal term $e^{-L(T-t)} P_{t,T}[\Psi](x)$ recovers the operator $\mathcal{M}_{\varepsilon, L}$, establishing $\mathcal{M}_{\varepsilon, L}(u)(s,x) \le \mathcal{M}_{\varepsilon, L}(v)(s,x)$ for all $(s,x)\in [0,T]\times H$, concluding the proof.
\end{proof}

Exploiting this order-preserving property within an iteration scheme, we can establish the monotonicity of the penalized solutions with respect to $\varepsilon$.

\begin{proposition}[Penalty Monotonicity]
\label{prop:penalty_monotonicity}
For any $0 < \varepsilon_1 < \varepsilon_2$, the corresponding unique mild solutions $u_{\varepsilon_1}$ and $u_{\varepsilon_2}$ of equation \eqref{eq:hjb_penalized} satisfy $u_{\varepsilon_1}(t, x) \le u_{\varepsilon_2}(t, x)$ for all $(t,x)\in[0, T] \times H$.
\end{proposition}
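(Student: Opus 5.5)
There are two natural routes; I would give the probabilistic one as the main argument and keep the operator-theoretic one as a cross-check.

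\emph{Probabilistic route.} By Theorem \ref{thm:feynman_kac}, $u_{\varepsilon_i}(t,x)=\inf_{\gamma\in\mathcal{V}_{\varepsilon_i}}J_\varepsilon(t,x;\gamma)$. The cost functional $J_\varepsilon$ in \eqref{eq:randomized_cost} depends on $\varepsilon$ only through the admissible class. For $\varepsilon_1<\varepsilon_2$ we have $[0,1/\varepsilon_2]\subset[0,1/\varepsilon_1]$, hence $\mathcal{V}_{\varepsilon_2}\subset\mathcal{V}_{\varepsilon_1}$. An infimum over a larger set is smaller, which gives $u_{\varepsilon_1}\le u_{\varepsilon_2}$ immediately.

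\emph{Operator-theoretic route.} This one matches the announced ``iteration scheme'' and uses Lemma \ref{lem:order_preserving}. Fix $L\ge 1/\varepsilon_1\ (\ge 1/\varepsilon_2)$, so both $\mathcal{M}_{\varepsilon_1,L}$ and $\mathcal{M}_{\varepsilon_2,L}$ are order preserving. By Theorems \ref{thm:equivalence_mild} and \ref{thm:existence_uniqueness}, $u_{\varepsilon_2}$ is the fixed point of $\mathcal{M}_{\varepsilon_2,L}$. Since $\frac{1}{\varepsilon_1}(r-c)^+\ge\frac{1}{\varepsilon_2}(r-c)^+$ pointwise, and the semigroup is positive,
\[
\mathcal{M}_{\varepsilon_1,L}(u_{\varepsilon_2})\le \mathcal{M}_{\varepsilon_2,L}(u_{\varepsilon_2})=u_{\varepsilon_2}.
\]
Set $v_0:=u_{\varepsilon_2}$ and $v_{n+1}:=\mathcal{M}_{\varepsilon_1,L}(v_n)$. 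Induction with Lemma \ref{lem:order_preserving} shows that $(v_n)$ is pointwise nonincreasing, so $v_n\le u_{\varepsilon_2}$ for all $n$.

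By \eqref{eq:global_n_iterate}, $v_n=\mathcal{M}^n_{\varepsilon_1,L}(v_0)$ converges in $\|\cdot\|_{\mathcal{B}_{m,T}}$ to the unique fixed point $u_{\varepsilon_1}$. Norm convergence implies pointwise convergence, because $w_m(x)<\infty$. Passing to the limit in $v_n\le u_{\varepsilon_2}$ gives the claim.

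The only delicate point is in the probabilistic route: checking that $J_\varepsilon$ is genuinely the same functional for both parameters, so that the set inclusion suffices. It is, since $\varepsilon$ enters only through the constraint $\gamma\in[0,1/\varepsilon]$. In the iteration route the corresponding point is the choice of a single shift $L$ valid for both operators, which $L\ge1/\varepsilon_1$ provides. Neither route involves a real obstacle.
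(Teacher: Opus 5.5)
Both of your arguments are correct.

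Your operator-theoretic route is essentially the paper's proof. The paper also fixes $L=1/\varepsilon_1$ and combines Lemma \ref{lem:order_preserving} with the pointwise inequality $\mathcal{M}_{\varepsilon_1,L}(u)\le\mathcal{M}_{\varepsilon_2,L}(u)$. The only difference is how the iteration is run. The paper runs two coupled Picard iterations from the common seed $e^{-L(T-t)}P_{t,T}[\Psi]$ and propagates $u_1^n\le u_2^n$ by induction. You instead iterate $\mathcal{M}_{\varepsilon_1,L}$ once, starting from $u_{\varepsilon_2}$, which you correctly identify as a supersolution of the $\varepsilon_1$-scheme. This is the mirror image of the subsolution iteration the paper uses in Proposition \ref{prop:global_maximality}.

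Your main, probabilistic route is genuinely different. The functional in \eqref{eq:randomized_cost} carries no explicit $\varepsilon$, so Theorem \ref{thm:feynman_kac} together with $\mathcal{V}_{\varepsilon_2}\subset\mathcal{V}_{\varepsilon_1}$ gives the claim at once. Strictly, you only need two facts: $u_{\varepsilon_1}\le J_\varepsilon(t,x;\gamma)$ for every $\gamma\in\mathcal{V}_{\varepsilon_1}$, and attainment $u_{\varepsilon_2}=J_\varepsilon(t,x;\gamma^*)$ for some $\gamma^*\in\mathcal{V}_{\varepsilon_2}$. There is no circularity, since that theorem is proved earlier and does not use monotonicity.

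What your route buys is brevity and a transparent interpretation: decreasing $\varepsilon$ only enlarges the set of admissible killing intensities. What it costs is dependence on the full stochastic representation: the Markov property, the martingale $N^\varepsilon$, a Fubini argument, and progressive measurability of $\gamma^*$, which implicitly uses the continuity in Corollary \ref{cor:continuity_penalized}.

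The paper's analytic comparison uses only positivity of $P_{t,s}$ and the contraction estimate \eqref{eq:global_n_iterate}. It would therefore work for any positive evolution family with no control interpretation, and it is the same order-preserving mechanism the paper reuses for the maximality result.
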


\begin{proof}
Set $L := 1/\varepsilon_1$. The condition $\varepsilon_1 < \varepsilon_2$ implies $L > 1/\varepsilon_2 > 0$. By Lemma \ref{lem:order_preserving}, this choice of $L$ ensures that both operators $\mathcal{M}_{\varepsilon_1, L}$ and $\mathcal{M}_{\varepsilon_2, L}$ are order-preserving on $\mathcal{B}_m([0, T] \times H)$. For any $u \in \mathcal{B}_m([0, T] \times H)$, since $(u-\Phi)^+ \ge 0$, the inequality $1/\varepsilon_1 > 1/\varepsilon_2$ implies:
\begin{equation*}
    -\frac{1}{\varepsilon_1} (u-\Phi)^+ \le -\frac{1}{\varepsilon_2} (u-\Phi)^+.
\end{equation*}
Adding the linear term $f + L u$, applying $P_{t,s}$, and integrating against the weight $e^{-L(s-t)}$ yields the global operator inequality $\mathcal{M}_{\varepsilon_1, L}(u) \le \mathcal{M}_{\varepsilon_2, L}(u)$. Defining the iteration sequences $u_i^{n+1} = \mathcal{M}_{\varepsilon_i, L}(u_i^n)$ with $u_1^0 = u_2^0 := e^{-L(T-t)} P_{t,T}[\Psi]$, the induction $u_1^n \le u_2^n$ holds. As $n \to \infty$, $u_i^n \to u_{\varepsilon_i}$ in $\mathcal{B}_m([0, T] \times H)$, establishing the result.
\end{proof}
\subsection{Mild Solutions for Variational Inequalities}
\label{sec:maximality}

To formalize the concept of a solution without requiring classical differentiability, we introduce the framework of mild sub-solutions and exact mild solutions for variational inequalities, adopting a notion similar to \cite[Definition 3.2]{FuMaTe17}.

\begin{definition}[Mild Solutions Framework]
\label{def:mild_subsolutions} 
\begin{enumerate}[label={(\textit{\roman*})}, leftmargin=*]
\item[]
 \item \emph{Mild Sub-solution:} A mapping $Z \in \mathcal{B}_m([0, T] \times H)$ is a \emph{mild sub-solution} to the parabolic variational inequality if it satisfies $Z(t, x) \le \Phi(t, x)$ on $[0, T) \times H$, $Z(T, x) \le \Psi(x)$ on $H$, and the dynamic sub-harmonicity condition for all $0 \le t \le s \le T$ and $x \in H$:
 \begin{equation} 
    \label{eq:subharmonicity_cond}
        Z(t, x) \le P_{t,s} [Z(s, \cdot)](x) + \int_t^s P_{t,r} [f(r, \cdot)](x) \, \mathrm{d}r.
    \end{equation}
 \item \emph{Exact Mild Solution:} A mapping $V \in \mathcal{B}_m([0, T] \times H)$ is the \emph{exact mild solution} to the variational inequality if it is a mild sub-solution and $Z(t, x) \le V(t, x)$ holds on $[0, T] \times H$ for any other mild sub-solution $Z$.
\end{enumerate}
\end{definition}

With the mild framework established, we demonstrate that the penalized approximants dominate any admissible mild sub-solution.
\begin{proposition}[Global Maximality of $u_\varepsilon$] 
\label{prop:global_maximality}
Let Assumptions \ref{ass:coefficients} and \ref{ass:problem_data} hold. If $Z:[0,T]\times H\to \mathbb{R}$ is a mild sub-solution of the variational inequality in the sense of Definition \ref{def:mild_subsolutions}, then for any penalization parameter $\varepsilon > 0$, the unique mild solution $u_\varepsilon$ to the penalized backward equation \eqref{eq:hjb_penalized} satisfies:
\begin{equation} 
\label{eq:maximality_eq}
    Z(t, x) \le u_\varepsilon(t, x), \quad \forall (t, x) \in [0, T] \times H.
\end{equation}
\end{proposition}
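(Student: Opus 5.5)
The approach is a comparison argument: show that $Z$ is a sub-solution of the shifted fixed-point problem, i.e. $Z \le \mathcal{M}_{\varepsilon,L}(Z)$ with $L := 1/\varepsilon$, and then iterate the order-preserving operator of Lemma \ref{lem:order_preserving}.

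\textbf{Step 1: the penalty term vanishes on $Z$.} Since $Z \le \Phi$ on $[0,T)\times H$, we have $(Z(s,\cdot)-\Phi(s,\cdot))^+ = 0$ for $s<T$. The single time $s=T$ is Lebesgue-null in the time integrals, so it is irrelevant. Therefore
\[
\mathcal{M}_{\varepsilon,L}(Z)(t,x) = e^{-L(T-t)}P_{t,T}[\Psi](x) + \int_t^T e^{-L(s-t)} P_{t,s}\big[f(s,\cdot)+L Z(s,\cdot)\big](x)\,\mathrm{d}s .
\]

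\textbf{Step 2: show $Z \le \mathcal{M}_{\varepsilon,L}(Z)$ from the sub-harmonicity condition \eqref{eq:subharmonicity_cond}.} The idea mirrors the proof of Theorem \ref{thm:equivalence_mild}: take the convex combination of \eqref{eq:subharmonicity_cond} over $s$ with the weights $Le^{-L(s-t)}\,\mathrm{d}s$ on $[t,T]$, plus the weight $e^{-L(T-t)}$ at $s=T$. These weights have total mass one, so
\[
Z(t,x) \le \int_t^T L e^{-L(s-t)}\Big(P_{t,s}[Z(s,\cdot)](x) + \int_t^s P_{t,r}[f(r,\cdot)](x)\,\mathrm{d}r\Big)\mathrm{d}s + e^{-L(T-t)}\Big(P_{t,T}[Z(T,\cdot)](x) + \int_t^T P_{t,r}[f(r,\cdot)](x)\,\mathrm{d}r\Big).
\]
Apply Fubini to the double integral: $\int_r^T Le^{-L(s-t)}\,\mathrm{d}s = e^{-L(r-t)} - e^{-L(T-t)}$. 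The $f$-terms then collapse to exactly $\int_t^T e^{-L(r-t)}P_{t,r}[f(r,\cdot)]\,\mathrm{d}r$. Finally, use $Z(T,\cdot)\le\Psi$ and the positivity of $P_{t,T}$. Together with Step 1, this gives $Z \le \mathcal{M}_{\varepsilon,L}(Z)$.

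Two technical points need attention here. Measurability of $s\mapsto P_{t,s}[Z(s,\cdot)](x)$ holds because $Z$ is jointly Borel. Integrability holds because $Z\in\mathcal{B}_m$ and Lemma \ref{lem:moment_estimates} bounds these terms. I expect this Step 2 computation to be the main obstacle, since it must be checked carefully that the exponential weights cancel exactly as in Theorem \ref{thm:equivalence_mild}.

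\textbf{Step 3: iterate.} By Lemma \ref{lem:order_preserving} (valid since $L=1/\varepsilon$), induction gives $Z \le \mathcal{M}_{\varepsilon,L}^n(Z)$ for every $n$. By Theorem \ref{thm:existence_uniqueness}, some iterate of $\mathcal{M}_{\varepsilon,L}$ is a contraction, so $\mathcal{M}_{\varepsilon,L}^n(Z)\to u_\varepsilon$ in $\mathcal{B}_{m,T}$ norm. In particular the convergence is pointwise, because $w_m$ is finite everywhere. Passing to the limit yields $Z\le u_\varepsilon$ on $[0,T]\times H$.
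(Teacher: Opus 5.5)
Your proposal is correct and follows the paper's proof essentially step for step. The paper also weights the sub-harmonicity inequality by $\frac{1}{\varepsilon}e^{-(s-t)/\varepsilon}\,\mathrm{d}s$ on $[t,T]$ and by $e^{-(T-t)/\varepsilon}$ at $s=T$, applies Fubini to cancel the terminal source term, drops the vanishing penalty $(Z-\Phi)^+$ to get $Z\le\mathcal{M}_{\varepsilon,1/\varepsilon}(Z)$, and iterates using Lemma~\ref{lem:order_preserving} and Theorem~\ref{thm:existence_uniqueness}; your remark that $s=T$ is Lebesgue-null is harmless but unnecessary, since $(Z-\Phi)^+=0$ there anyway because $Z(T,\cdot)\le\Psi\le\Phi(T,\cdot)$.
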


\begin{proof}
Evaluating the sub-harmonicity condition \eqref{eq:subharmonicity_cond} at $s = T$, applying the terminal constraint $Z(T, \cdot) \le \Psi(\cdot)$, and multiplying by the weight $e^{-\frac{T-t}{\varepsilon}}$ yields:
\begin{equation} 
\label{eq:max_boundary_eval}
    e^{-\frac{T-t}{\varepsilon}} Z(t, x) \le e^{-\frac{T-t}{\varepsilon}} P_{t,T}[\Psi](x) + e^{-\frac{T-t}{\varepsilon}} \int_t^T P_{t,r} [f(r, \cdot)](x) \, \mathrm{d}r.
\end{equation}
Integrating \eqref{eq:subharmonicity_cond} against the factor $\frac{1}{\varepsilon} e^{-\frac{s-t}{\varepsilon}}$ over $s \in [t, T]$ gives:
\begin{equation}
\label{eq:max_integrated_subharmonicity}
    Z(t, x) \left( 1 - e^{-\frac{T-t}{\varepsilon}} \right) \le \int_t^T \frac{1}{\varepsilon} e^{-\frac{s-t}{\varepsilon}} P_{t,s}[Z(s, \cdot)](x) \, \mathrm{d}s + \int_t^T \frac{1}{\varepsilon} e^{-\frac{s-t}{\varepsilon}} \left( \int_t^s P_{t,r}[f(r, \cdot)](x) \, \mathrm{d}r \right) \mathrm{d}s.
\end{equation}
Applying Fubini's Theorem to reverse the integration order in the double integral yields:
\begin{align}
\label{eq:max_fubini_eval}
    \int_t^T P_{t,r}[f(r, \cdot)](x) \left( \int_r^T \frac{1}{\varepsilon} e^{-\frac{s-t}{\varepsilon}} \, \mathrm{d}s \right) \mathrm{d}r &= \int_t^T e^{-\frac{r-t}{\varepsilon}} P_{t,r}[f(r, \cdot)](x) \, \mathrm{d}r - \int_t^T e^{-\frac{T-t}{\varepsilon}} P_{t,r}[f(r, \cdot)](x) \, \mathrm{d}r.
\end{align}
Adding \eqref{eq:max_boundary_eval} and \eqref{eq:max_integrated_subharmonicity} and applying \eqref{eq:max_fubini_eval} cancels the terminal source term. Relabeling the integration variables produces:
\begin{equation} 
\label{eq:max_consolidated}
    Z(t, x) \le e^{-\frac{T-t}{\varepsilon}} P_{t,T}[\Psi](x) + \int_t^T e^{-\frac{s-t}{\varepsilon}} P_{t,s} \left[ f(s, \cdot) + \frac{1}{\varepsilon} Z(s, \cdot) \right](x) \, \mathrm{d}s.
\end{equation}
Since $Z \le \Phi$, the positive part $(Z - \Phi)^+$ vanishes pointwise. Subtracting this zero term inside the integral identifies the right-hand side as the shifted operator $\mathcal{M}_{\varepsilon, 1/\varepsilon}$ evaluated at $Z$, yielding the operator inequality:
\begin{equation}
\label{eq:max_operator_ineq}
    Z \le \mathcal{M}_{\varepsilon, 1/\varepsilon}(Z).
\end{equation}
By Lemma \ref{lem:order_preserving}, $\mathcal{M}_{\varepsilon, 1/\varepsilon}$ is order-preserving on $\mathcal{B}_m([0, T] \times H)$. Iterating this inequality generates the monotonically bounded sequence $Z \le \mathcal{M}_{\varepsilon, 1/\varepsilon}^n(Z)$ for all $n \ge 1$. As $n \to \infty$, Theorem \ref{thm:existence_uniqueness} ensures $\mathcal{M}_{\varepsilon, 1/\varepsilon}^n(Z) \to u_\varepsilon$ in $\mathcal{B}_m([0, T] \times H)$. Passing to the pointwise limit establishes $Z(t, x) \le u_\varepsilon(t, x)$ on $[0, T] \times H$, concluding the proof.
\end{proof}

The following Proposition establishes the pointwise convergence of the approximants to a limit $V$, and shows that this limit belongs to the space $\mathcal{B}_m([0,T]\times H)$.

\begin{proposition}[Existence and Growth of the Limit]
\label{prop:existence_limit}
Let Assumptions \ref{ass:coefficients} and \ref{ass:problem_data} hold. The pointwise limit $V(t, x) := \lim_{\varepsilon \downarrow 0} u_\varepsilon(t, x)$ exists everywhere on $[0, T] \times H$ and belongs to the space $ \mathcal{B}_m([0, T] \times H)$.
\end{proposition}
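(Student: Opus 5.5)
The plan is to combine the monotonicity in $\varepsilon$ from Proposition \ref{prop:penalty_monotonicity} with the uniform lower bound of Theorem \ref{thm:uniform_lower_bound}, and to get a matching upper envelope by choosing a trivial control in Theorem \ref{thm:feynman_kac}.

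\emph{Existence of the limit.} Fix $(t,x)\in[0,T]\times H$. By Proposition \ref{prop:penalty_monotonicity}, the map $\varepsilon\mapsto u_\varepsilon(t,x)$ is nondecreasing in $\varepsilon$, so it is nonincreasing as $\varepsilon\downarrow 0$. By Theorem \ref{thm:uniform_lower_bound} it is bounded below by $Z_{low}(t,x)$, which does not depend on $\varepsilon$. A monotone bounded real function of $\varepsilon$ has a limit as $\varepsilon\downarrow0$, namely $\inf_{\varepsilon>0}u_\varepsilon(t,x)$. Hence
\[
V(t,x):=\lim_{\varepsilon\downarrow0}u_\varepsilon(t,x)=\inf_{\varepsilon>0}u_\varepsilon(t,x)\in\R
\]
exists at every point.

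\emph{Measurability.} By monotonicity, the limit coincides with the limit along the sequence $\varepsilon_n=1/n$. Each $u_{\varepsilon_n}$ is Borel, in fact continuous by Corollary \ref{cor:continuity_penalized}. Therefore $V$ is a pointwise limit of a sequence of Borel maps, and so it is Borel measurable on $[0,T]\times H$.

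\emph{Weighted growth.} The lower bound $V\ge Z_{low}\ge -C\,w_m$ is inherited directly from Theorem \ref{thm:uniform_lower_bound}. For the upper bound, take the admissible control $\gamma\equiv0$ in Theorem \ref{thm:feynman_kac}. This gives
\[
u_\varepsilon(t,x)\le J_\varepsilon(t,x;0)=\E\Big[\int_t^T f(s,X(s))\,\mathrm{d}s+\Psi(X(T))\Big].
\]
By Lemma \ref{lem:moment_estimates}, the right-hand side is at most $K_m\big(T\|f\|_{\mathcal{B}_{m,T}}+\|\Psi\|_{\mathcal{B}_m}\big)w_m(x)$. Alternatively, one can use $u_\varepsilon\le u_{\varepsilon_0}$ for $\varepsilon\le\varepsilon_0$, with $u_{\varepsilon_0}\in\mathcal{B}_m([0,T]\times H)$. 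In either case
\[
|V(t,x)|\le C\,w_m(x)
\]
with $C$ independent of $(t,x)$, so $\|V\|_{\mathcal{B}_{m,T}}<\infty$ and $V\in\mathcal{B}_m([0,T]\times H)$.

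There is no real obstacle here. The only point requiring care is that the lower barrier must be independent of $\varepsilon$, and this is exactly what Theorem \ref{thm:uniform_lower_bound} provides.
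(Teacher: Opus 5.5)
Your proof is correct and follows essentially the same route as the paper. Monotonicity in $\varepsilon$ together with the $\varepsilon$-independent barrier $Z_{low}$ gives the pointwise limit as an infimum, and the growth bound comes from sandwiching $u_\varepsilon$ between $Z_{low}$ and a fixed member of the family; the paper uses $|u_\varepsilon|\le\max(|u_1|,|Z_{low}|)$ for $\varepsilon\le 1$, which is your alternative argument. Your main upper bound via the control $\gamma\equiv 0$ and your reduction of measurability to the countable sequence $\varepsilon_n=1/n$ are correct refinements of points the paper leaves implicit.
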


\begin{proof}
By Proposition \ref{prop:penalty_monotonicity} and Theorem \ref{thm:uniform_lower_bound}, the sequence $\{u_\varepsilon\}_{\varepsilon > 0}$ decreases pointwise. Thus, the limit $V := \inf_{\varepsilon > 0} u_\varepsilon$ is well-defined on $[0, T] \times H$ and is measurable. Since $|u_\varepsilon| \le \max(|u_1|, |Z_{low}|)$ and both bounds belong to $\mathcal{B}_m([0,T]\times H)$, the limit inherits the uniform polynomial growth condition, ensuring $\|V\|_{\mathcal{B}_{m,T}} < \infty$
\end{proof}

Now, we establish the dynamic sub-harmonicity property for the pointwise limit $V$.
\begin{lemma}[Dynamic Sub-harmonicity]
\label{lem:subharmonicity_V}
Let Assumptions \ref{ass:coefficients} and \ref{ass:problem_data} hold. Then, the pointwise limit $V(t, x) := \lim_{\varepsilon \downarrow 0} u_\varepsilon(t, x)$ satisfies the dynamic sub-harmonicity condition for all $0 \le t \le s \le T$ and $x \in H$:
\begin{equation}
    V(t, x) \le P_{t,s} [V(s, \cdot)](x) + \int_t^s P_{t,r} [f(r, \cdot)](x) \, \mathrm{d}r.
\end{equation}
\end{lemma}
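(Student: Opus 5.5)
The plan is to prove the inequality first at the level of each penalized approximant $u_\varepsilon$, with an error term carrying a favourable sign, and then to pass to the limit $\varepsilon \downarrow 0$ by monotone convergence.

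\textbf{Step 1: semigroup identity for $u_\varepsilon$.} By the Markov property $P_{t,s}\circ P_{s,r}=P_{t,r}$ and the mild formulation \eqref{eq:direct_formulation}, I expect, for all $0\le t\le s\le T$,
\begin{equation*}
u_\varepsilon(t,x) = P_{t,s}[u_\varepsilon(s,\cdot)](x) + \int_t^s P_{t,r}\Big[f(r,\cdot) - \tfrac{1}{\varepsilon}\big(u_\varepsilon(r,\cdot)-\Phi(r,\cdot)\big)^+\Big](x)\,\mathrm{d}r .
\end{equation*}
To derive it, apply $P_{t,s}$ to \eqref{eq:direct_formulation} written at time $s$. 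Then use Fubini, which is justified by the polynomial growth bounds and Lemma \ref{lem:moment_estimates}, to move $P_{t,s}$ inside the time integral. Subtracting the result from \eqref{eq:direct_formulation} at time $t$ gives the identity. Equivalently, it follows from the martingale property of $N^\varepsilon$ established in the proof of Theorem \ref{thm:feynman_kac}, evaluated between $t$ and $s$ and then taken in expectation.

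\textbf{Step 2: drop the penalty.} The penalty term $\frac{1}{\varepsilon}(u_\varepsilon-\Phi)^+$ is nonnegative and $P_{t,r}$ is positive. Discarding it therefore gives
\begin{equation*}
u_\varepsilon(t,x) \le P_{t,s}[u_\varepsilon(s,\cdot)](x) + \int_t^s P_{t,r}[f(r,\cdot)](x)\,\mathrm{d}r .
\end{equation*}

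\textbf{Step 3: let $\varepsilon\downarrow0$.} The left-hand side converges pointwise to $V(t,x)$ by Proposition \ref{prop:existence_limit}. The right-hand side is $\E[u_\varepsilon(s,X(s;t,x))]$. By Proposition \ref{prop:penalty_monotonicity} the family $u_\varepsilon$ is monotone: it decreases as $\varepsilon\downarrow 0$, converging to $V$. It is also dominated, since $|u_\varepsilon|\le \max(|u_{\varepsilon_0}|,|Z_{low}|)\le C\,w_m$ for $\varepsilon\le\varepsilon_0$, by Theorem \ref{thm:uniform_lower_bound}. The dominating function is integrable along the flow because $\E[w_m(X(s;t,x))]\le K_m w_m(x)$ by Lemma \ref{lem:moment_estimates}. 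Monotone (or dominated) convergence then gives $\E[u_\varepsilon(s,X(s))]\to \E[V(s,X(s))]=P_{t,s}[V(s,\cdot)](x)$. The $f$-term does not depend on $\varepsilon$, so passing to the limit yields the claim.

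\textbf{Main obstacle.} The only delicate point is Step 1, namely deriving the semigroup identity rigorously on an arbitrary subinterval $[t,s]$ rather than only on $[t,T]$. This requires checking measurability of $(r,y)\mapsto u_\varepsilon(r,y)$, which Corollary \ref{cor:continuity_penalized} provides. It also requires integrability for Fubini, which comes from the $\mathcal{B}_m$ bounds. Step 3 is routine, because the convergence is monotone and dominated by a weight whose expectation along the flow is finite.
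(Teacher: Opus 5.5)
Your proposal is correct and follows essentially the same route as the paper. The paper also writes the Markov-property identity for $u_\varepsilon$ on $[t,s]$, discards the nonnegative penalty term, and passes to the limit $\varepsilon \downarrow 0$ by dominated convergence, using the uniform $\mathcal{B}_m$ bound together with the moment estimate of Lemma~\ref{lem:moment_estimates}; your Chapman--Kolmogorov/Fubini derivation of the first step just spells out what the paper compresses into ``by the Markov property.''
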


\begin{proof}
By the Markov property, the penalized solution satisfies:
\begin{equation*}
u_\varepsilon(t, x) = P_{t,s} [u_\varepsilon(s, \cdot)](x) + \int_t^s P_{t,r} \left[ f(r, \cdot) - \frac{1}{\varepsilon}(u_\varepsilon(r, \cdot) - \Phi(r, \cdot))^+ \right](x) \, \mathrm{d}r.
\end{equation*}
Since the penalization term $\frac{1}{\varepsilon}(u_\varepsilon - \Phi)^+$ is non-negative, we have the inequality:
\begin{equation*}
u_\varepsilon(t, x) \le P_{t,s} [u_\varepsilon(s, \cdot)](x) + \int_t^s P_{t,r} [f(r, \cdot)](x) \, \mathrm{d}r.
\end{equation*}
The uniform bound $\|u_\varepsilon\|_{\mathcal{B}_{m,T}} \le C$ and the polynomial growth stability of $P_{t,s}$ (Lemma \ref{lem:moment_estimates}) allow the application of the Dominated Convergence Theorem. Taking the limit as $\varepsilon \downarrow 0$ establishes the sub-harmonicity condition for $V$.
\end{proof}

The local Lipschitz continuity of the approximants, established in Theorem \ref{thm:uniform_lipschitz}, is inherited by the pointwise limit.

\begin{corollary}[Local Lipschitz Continuity of the pointwise limit]
\label{cor:value_function_lipschitz}
Let Assumptions \ref{ass:coefficients} and \ref{ass:problem_data} hold. Then, the pointwise limit $V(t, x) := \lim_{\varepsilon \downarrow 0} u_\varepsilon(t, x)$ is locally Lipschitz continuous in the space variable. Specifically, for all $x, y \in H$ and $t \in [0, T]$, the following bound holds:
\begin{equation}
\label{eq:value_function_lipschitz}
    |V(t, x) - V(t, y)| \le K(t) (1 + \|x\|_H^{m-1} + \|y\|_H^{m-1})\|x - y\|_H,
\end{equation}
where $K(t)$ is the continuous temporal envelope established in Theorem \ref{thm:uniform_lipschitz}.
\end{corollary}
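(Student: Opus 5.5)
The plan is to pass to the limit in the uniform estimate of Theorem \ref{thm:uniform_lipschitz}, which already contains everything needed; the argument is pointwise and requires no compactness.

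First, fix $t \in [0,T]$ and $x, y \in H$. By Theorem \ref{thm:uniform_lipschitz}, for every $\varepsilon > 0$,
\begin{equation*}
|u_\varepsilon(t,x) - u_\varepsilon(t,y)| \le K(t)\, W(x,y),
\end{equation*}
where $K(t) = M_2 L (T-t+1)$ does not depend on $\varepsilon$. This uniformity follows from the probabilistic representation of Theorem \ref{thm:feynman_kac} and the identity $\int_t^T \gamma_s e^{-A_s}\,\mathrm{d}s = 1 - e^{-A_T}$, which removes every dependence on the upper bound $1/\varepsilon$ of the admissible controls.

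Second, Proposition \ref{prop:existence_limit} gives the pointwise convergences $u_\varepsilon(t,x) \to V(t,x)$ and $u_\varepsilon(t,y) \to V(t,y)$ as $\varepsilon \downarrow 0$; the limit exists everywhere by monotonicity (Proposition \ref{prop:penalty_monotonicity}) and the lower bound of Theorem \ref{thm:uniform_lower_bound}. The map $(a,b) \mapsto |a-b|$ is continuous on $\R^2$, so the left-hand side converges to $|V(t,x) - V(t,y)|$. The right-hand side is constant in $\varepsilon$, so the inequality is preserved in the limit. This yields \eqref{eq:value_function_lipschitz} with exactly the same envelope $K(t)$.

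There is no genuine obstacle here. The only point to check is that the constant in Theorem \ref{thm:uniform_lipschitz} is truly independent of $\varepsilon$, so that no blow-up occurs as $\varepsilon \downarrow 0$. Its explicit form $M_2 L (T-t+1)$ confirms this. Finally, since $W(x,y) \le (1 + 2R^{m-1})\|x-y\|_H$ on any ball of radius $R$, the estimate gives genuine Lipschitz continuity on bounded sets, which is the meaning of \emph{local} in the statement.
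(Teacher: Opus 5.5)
Your proposal is correct and takes the same approach as the paper. The paper likewise takes the $\varepsilon$-uniform bound of Theorem~\ref{thm:uniform_lipschitz} and passes to the pointwise limit $\varepsilon \downarrow 0$ supplied by Proposition~\ref{prop:existence_limit}, so your extra checks (that $K(t) = M_2 L(T-t+1)$ is $\varepsilon$-free, and what ``local'' means on bounded balls) are accurate elaborations of that same one-line argument.
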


\begin{proof}
By Proposition \ref{prop:existence_limit}, $V$ is the pointwise limit of the penalized approximants $u_\varepsilon$. The thesis follows from Theorem \ref{thm:uniform_lipschitz} by passing to the limit as $\varepsilon \downarrow 0$.
\end{proof}

To ensure uniform estimates on the penalty term independent of the penalization parameter $\varepsilon$, we define a specific class of admissible dynamic obstacles.

\begin{definition}[Admissible Obstacles]
\label{def:admissible_obstacle}
Let Assumptions \ref{ass:coefficients} and \ref{ass:problem_data} hold. Let $u_\varepsilon$ be the unique mild solution of equation \eqref{eq:hjb_penalized}. A dynamic obstacle $\Phi \in Lip_m([0, T] \times H)$ is said to be \emph{admissible} for the parabolic optimal stopping problem if the associated family of penalty terms, defined by:
\begin{equation}
\label{eq:yosida_penalty}
    \Pi_\varepsilon(t, x) := \frac{1}{\varepsilon} \left( u_\varepsilon(t, x) - \Phi(t, x) \right)^+,
\end{equation}
is uniformly bounded in the weighted parabolic topology. Specifically, there must exist a constant $K > 0$, independent of $\varepsilon > 0$, such that:
\begin{equation}
\label{eq:penalty_bound}
    \sup_{\varepsilon > 0} \|\Pi_\varepsilon\|_{\mathcal{B}_{m, T}} \le K.
\end{equation}
\end{definition}

\begin{remark}[Role of admissibility]
\label{rem:admissibility_forward_ref}
The definition is implicit, since it is formulated in terms of the solutions $u_\varepsilon$ of the penalized equations. Nevertheless, it captures the exact estimate needed in the verification and regularity arguments. The purpose of Section \ref{sec:admissible_obstacles} is to show that this condition is not merely formal: it follows from natural structural assumptions on the obstacle. In particular, admissibility holds when the obstacle is a suitable mild supersolution of the penalized dynamics, and this criterion can be checked in relevant classes of examples.
\end{remark}

Finally, the following lemma establishes the joint space-time continuity of the limit $V$.

\begin{lemma}[Joint Continuity of the pointwise limit]
\label{lem:joint_continuity_V}
Let Assumptions \ref{ass:coefficients} and \ref{ass:problem_data} hold. Assume that the obstacle $\Phi$ is admissible in the sense of Definition \ref{def:admissible_obstacle}. Then, the pointwise limit $V(t, x) := \lim_{\varepsilon \downarrow 0} u_\varepsilon(t, x)$ is jointly continuous on $[0, T] \times H$.
\end{lemma}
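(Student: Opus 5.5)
The idea is to show that, under admissibility, $u_\varepsilon\to V$ uniformly on every cylinder $[0,T]\times B_R$. Since each $u_\varepsilon$ is jointly continuous by Corollary \ref{cor:continuity_penalized}, the joint continuity of $V$ then follows exactly as in the proof of Lemma \ref{lem:closedness_continuous_space}. If that is too ambitious, a fallback is to obtain time-equicontinuity of $\{u_\varepsilon\}$ locally uniformly in $x$ and combine it with the uniform spatial Lipschitz bound of Theorem \ref{thm:uniform_lipschitz}.

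\emph{Equicontinuity in time.} Fix $R>0$, $x\in B_R$ and $0\le t\le t'\le T$. By the Markov property used in Lemma \ref{lem:subharmonicity_V},
\begin{equation*}
u_\varepsilon(t,x)-u_\varepsilon(t',x)=\E\big[u_\varepsilon(t',X(t';t,x))-u_\varepsilon(t',x)\big]+\int_t^{t'}P_{t,r}\big[f(r,\cdot)-\Pi_\varepsilon(r,\cdot)\big](x)\,\mathrm{d}r .
\end{equation*}
The integral term is where admissibility enters. Since $|f|+|\Pi_\varepsilon|\le(\|f\|_{\mathcal{B}_{m,T}}+K)w_m$, Lemma \ref{lem:moment_estimates} bounds it by $K_m(\|f\|_{\mathcal{B}_{m,T}}+K)w_m(x)|t'-t|$, uniformly in $\varepsilon$.

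For the first term I use Theorem \ref{thm:uniform_lipschitz} with $\sup_t K(t)\le M_2L(T+1)$. Combined with Hölder's inequality and Lemma \ref{lem:moment_estimates}, this gives
\begin{equation*}
\E\big|u_\varepsilon(t',X(t';t,x))-u_\varepsilon(t',x)\big|\le C\,\E\big[(1+\|X(t')\|^{m-1}+\|x\|^{m-1})\|X(t';t,x)-x\|\big]\le C_R\,\big(\E\|X(t';t,x)-x\|^2\big)^{1/2}.
\end{equation*}
The right-hand side does not depend on $\varepsilon$.

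\emph{Main obstacle.} The difficulty is the uniformity of $\E\|X(t';t,x)-x\|^2\to0$ as $t'-t\to0$ over $x\in B_R$. By the mild formula, the drift and stochastic convolutions contribute $O(|t'-t|)$, uniformly on $B_R$, via the linear growth and moment bounds. The remaining term $\|e^{(t'-t)A}x-x\|$ is not uniform on balls of an infinite-dimensional space unless $A$ is bounded.

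I will therefore avoid asking for uniformity in $x$ and prove continuity at a fixed point $(t_0,x_0)$ directly. Take $(t_n,x_n)\to(t_0,x_0)$ and split
\begin{equation*}
|V(t_n,x_n)-V(t_0,x_0)|\le|V(t_n,x_n)-V(t_n,x_0)|+|V(t_n,x_0)-V(t_0,x_0)| .
\end{equation*}
The first term is at most $C\,W(x_n,x_0)\to0$ by Corollary \ref{cor:value_function_lipschitz}.

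For the second term, the estimate above holds at the single point $x_0$ with a modulus $\rho(|t_n-t_0|)$ that is uniform in $\varepsilon$. Here $\rho$ combines $\|e^{hA}x_0-x_0\|\to0$, which follows from strong continuity, with the $O(h)$ and $O(h^{1/2})$ contributions. The same reasoning with $t_n<t_0$ handles the other ordering, because the representation is symmetric in which time is earlier. Hence $|u_\varepsilon(t_n,x_0)-u_\varepsilon(t_0,x_0)|\le\rho(|t_n-t_0|)$ for all $\varepsilon$. Letting $\varepsilon\downarrow0$ using Proposition \ref{prop:existence_limit} gives the same bound for $V$, which proves joint continuity.
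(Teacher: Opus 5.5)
Your proof is correct, but it handles the time direction differently from the paper. Both arguments reduce joint continuity to continuity of $s\mapsto V(s,x)$ at fixed $x$, using the spatial Lipschitz bound, which is uniform in $t$. Both also rest on the Markov identity for $u_\varepsilon$ on a short time interval. The paper, however, never bounds $|u_\varepsilon(t,x)-u_\varepsilon(t',x)|$ from both sides. It gets upper semicontinuity of $V$ for free, as a decreasing limit of continuous functions (Corollary \ref{cor:continuity_penalized}, Proposition \ref{prop:penalty_monotonicity}). It derives $\liminf_{s\downarrow t}V(s,x)\ge V(t,x)$ from the sub-harmonicity of $V$ alone (Lemma \ref{lem:subharmonicity_V}), without admissibility. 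Admissibility enters only for $\liminf_{s\uparrow t}V(s,x)\ge V(t,x)$, through the one-sided bound $\Pi_\varepsilon\le Kw_m$ and a monotone-convergence passage under $P_{s,t}$. You instead control both terms of the identity uniformly in $\varepsilon$. The penalty integral is handled by $0\le\Pi_\varepsilon\le Kw_m$. The transport term is handled by Theorem \ref{thm:uniform_lipschitz}, H\"older's inequality, and the mean-square continuity of the flow at the single point $x_0$. This gives an explicit, $\varepsilon$-independent modulus of continuity in time at each $x_0$. It holds for either ordering of the two times, because $\E\|X(t';t,x_0)-x_0\|_H^2$ is bounded by a quantity depending only on $x_0$ and $t'-t$, not on the starting time $t$. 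Your route is more quantitative, since it gives equicontinuity of the whole family. It needs only pointwise convergence rather than monotonicity in $\varepsilon$, and it avoids passing limits under the semigroup. The paper's route shows more precisely where admissibility is indispensable. You were also right to drop the uniform-on-cylinders idea: $\|e^{hA}x-x\|_H$ is not uniformly small on balls when $A$ is unbounded, and the pointwise argument does not need it.
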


\begin{proof}
By Proposition \ref{prop:existence_limit}, the sequence of continuous mappings $\{u_\varepsilon\}_{\varepsilon > 0}$ decreases pointwise to $V$ as $\varepsilon \downarrow 0$. As the infimum of continuous functions, $V$ is upper semi-continuous on $[0, T] \times H$. To establish lower semi-continuity, let $(s_n, y_n) \to (t, x)$. The uniform spatial Lipschitz bound from Corollary \ref{cor:value_function_lipschitz} yields: $$|V(s_n, y_n) - V(s_n, x)| \le K(s_n)W(y_n, x) \to 0.$$ Thus, joint lower semi-continuity reduces to proving temporal lower semi-continuity. For the right limit, the dynamic sub-harmonicity condition (Lemma \ref{lem:subharmonicity_V}) combined with the spatial Lipschitz continuity of $V$ implies:
\begin{equation*}
    V(t, x) \le V(s, x) + K(s)\E[W(X(s; t, x), x)] + \int_t^s P_{t,r}[f(r, \cdot)](x) \, \mathrm{d}r.
\end{equation*}
As $s \downarrow t$, the mean-square continuity of the mild stochastic flow and the polynomial growth of $f$ ensure the error terms vanish, yielding $\liminf_{s \downarrow t} V(s, x) \ge V(t, x)$. For the left limit, using the definition of mild solution for $u_\varepsilon$ and applying the uniform admissibility bound $\Pi_\varepsilon \le K w_m$ provides:
\begin{equation*}
    u_\varepsilon(s, x) \ge P_{s,t}[u_\varepsilon(t, \cdot)](x) + \int_s^t P_{s,r}\left[f(r, \cdot) - K w_m(\cdot)\right](x) \, \mathrm{d}r.
\end{equation*}
Taking $\varepsilon \downarrow 0$ via monotone convergence, and expanding $P_{s,t}[V(t, \cdot)](x)$ via the spatial Lipschitz property, we obtain:
\begin{equation*}
    V(s, x) \ge V(t, x) - K(t)\E[W(X(t; s, x), x)] + \int_s^t P_{s,r}\left[f(r, \cdot) - K w_m(\cdot)\right](x) \, \mathrm{d}r.
\end{equation*}
Taking $s \uparrow t$ yields $\liminf_{s \uparrow t} V(s, x) \ge V(t, x)$, which concludes the proof.
\end{proof}

We now prove that the pointwise limit $V$ is the exact mild solution to the variational inequality.

\begin{theorem}[Convergence to the Exact Mild Solution]
\label{thm:convergence_exact}
Let Assumptions \ref{ass:coefficients} and \ref{ass:problem_data} hold. Assume that the obstacle $\Phi$ is admissible in the sense of Definition \ref{def:admissible_obstacle}. Then, the pointwise limit $V(t, x) := \lim_{\varepsilon \downarrow 0} u_\varepsilon(t, x)$ is the unique exact mild solution to the variational inequality in the sense of Definition \ref{def:mild_subsolutions}.
\end{theorem}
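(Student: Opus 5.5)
To prove the statement, we must show three things: $V\in\mathcal{B}_m([0,T]\times H)$ (already given by Proposition \ref{prop:existence_limit}), $V$ is a mild sub-solution, and $V$ dominates every other mild sub-solution. Uniqueness is then automatic, since two exact mild solutions dominate each other.

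\emph{Maximality.} This is immediate. If $Z$ is any mild sub-solution, Proposition \ref{prop:global_maximality} gives $Z\le u_\varepsilon$ for every $\varepsilon>0$. Letting $\varepsilon\downarrow 0$ pointwise yields $Z\le V$ on $[0,T]\times H$.

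\emph{Sub-solution property.} The sub-harmonicity inequality \eqref{eq:subharmonicity_cond} for $V$ is exactly Lemma \ref{lem:subharmonicity_V}. Two constraints remain.

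For the terminal constraint, every mild solution satisfies $u_\varepsilon(T,x)=P_{T,T}[\Psi](x)=\Psi(x)$, because the integral term vanishes at $t=T$. Hence $V(T,\cdot)=\Psi$, which gives $V(T,\cdot)\le\Psi$.

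For the obstacle constraint $V\le\Phi$ on $[0,T)\times H$ we use admissibility. By Definition \ref{def:admissible_obstacle},
\[
\bigl(u_\varepsilon(t,x)-\Phi(t,x)\bigr)^+=\varepsilon\,\Pi_\varepsilon(t,x)\le \varepsilon K w_m(x).
\]
Since $V\le u_\varepsilon$ by the monotone decrease, we get
\[
(V-\Phi)^+\le (u_\varepsilon-\Phi)^+\le \varepsilon K w_m(x)\to 0,
\]
so $V\le\Phi$ pointwise.

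The argument also works without admissibility, and we keep it as a fallback. From the mild formulation,
\[
\int_t^T P_{t,s}\bigl[(u_\varepsilon(s,\cdot)-\Phi(s,\cdot))^+\bigr](x)\,\mathrm{d}s=\varepsilon\Bigl(P_{t,T}[\Psi](x)+\int_t^T P_{t,s}[f(s,\cdot)](x)\,\mathrm{d}s-u_\varepsilon(t,x)\Bigr).
\]
The bracket is $O(w_m(x))$ by Theorem \ref{thm:uniform_lower_bound}, so the left side tends to $0$. Monotone convergence then gives $\int_t^T P_{t,s}[(V-\Phi)^+(s,\cdot)](x)\,\mathrm{d}s=0$. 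Joint continuity of $V$ and $\Phi$ (Lemma \ref{lem:joint_continuity_V}), together with $X(s;t,x)\to x$ as $s\downarrow t$, forces $(V-\Phi)^+(t,x)=0$.

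\emph{Main obstacle.} The only delicate point is this obstacle constraint. Under the admissibility hypothesis it reduces to the one-line estimate above, so the main care is in the bookkeeping: checking that the monotone decrease (Proposition \ref{prop:penalty_monotonicity}) indeed yields $V\le u_\varepsilon$, and recording that uniqueness follows from the definition of exact mild solution as the maximal sub-solution.
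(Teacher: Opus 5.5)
Your proof is correct. Maximality, sub-harmonicity, the terminal condition and uniqueness follow the paper's Steps 1 and 3. The difference lies in how you obtain the obstacle constraint $V\le\Phi$.

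\emph{The paper's route.} It multiplies the mild equation by $\varepsilon$ and lets $\varepsilon\downarrow0$ to get $\int_t^T \E[(V-\Phi)^+(s,X(s;t,x))]\,\mathrm{d}s=0$. It then needs the joint continuity of $V$ (Lemma \ref{lem:joint_continuity_V}) to conclude that the integrand vanishes at $s=t$. Admissibility enters only through that lemma, in its left-limit estimate.

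\emph{Your route.} You use admissibility directly: $(u_\varepsilon-\Phi)^+=\varepsilon\Pi_\varepsilon\le\varepsilon K w_m(x)$, and letting $\varepsilon\downarrow0$ gives $(V-\Phi)^+\le0$ pointwise. Continuity of $r\mapsto r^+$ alone suffices here, so you do not need $V\le u_\varepsilon$. This is shorter, gives the constraint on all of $[0,T]\times H$ at once, and makes the theorem independent of Lemma \ref{lem:joint_continuity_V}.

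What the paper's route buys is that its integrated identity holds without admissibility. The constraint would therefore follow from any independent proof of continuity of $V$.

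This is also why your remark that the fallback ``works without admissibility'' is inaccurate as written. The fallback invokes Lemma \ref{lem:joint_continuity_V}, which the paper proves under admissibility. Within this paper, the fallback is just the paper's own argument and uses the same hypothesis.
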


\begin{proof}
The proof relies on verifying the requirements of Definition \ref{def:mild_subsolutions} and proceeds in three steps.

\emph{Step 1: Growth conditions and dynamic sub-harmonicity.} 
By Proposition \ref{prop:existence_limit}, the pointwise limit $V$ exists, inherits the uniform polynomial growth bounds, and belongs to the weighted measurable space $\mathcal{B}_m([0, T] \times H)$. Furthermore, Lemma \ref{lem:subharmonicity_V} establishes that $V$ satisfies the dynamic sub-harmonicity condition associated with the transition evolution family.

\emph{Step 2: Geometric constraint.}
Multiplying the integral formulation of the penalized equation for $u_\varepsilon$ by $\varepsilon$ yields:
\begin{equation*}
    \int_t^T P_{t,s}\left[ (u_\varepsilon(s, \cdot) - \Phi(s, \cdot))^+ \right](x) \, \mathrm{d}s = \varepsilon \left( P_{t,T}[\Psi](x) + \int_t^T P_{t,s} [f(s, \cdot)](x) \, \mathrm{d}s - u_\varepsilon(t, x) \right).
\end{equation*}
Since the right-hand side is locally bounded, taking $\varepsilon \downarrow 0$ vanishes the expression. The uniform polynomial bounds on $u_\varepsilon$ justify the application of the Dominated Convergence Theorem, providing:
\begin{equation*}
    \int_t^T \E \left[ \big(V(s, X(s; t, x)) - \Phi(s, X(s; t, x))\big)^+ \right] \mathrm{d}s = 0.
\end{equation*}
By Lemma \ref{lem:joint_continuity_V}, the limit mapping $V$ is jointly continuous on $[0, T] \times H$. Since the obstacle $\Phi$ and the stochastic flow $X(\cdot; t, x)$ are also continuous, the mapping $s \mapsto \E[\big(V(s, X(s; t, x)) - \Phi(s, X(s; t, x))\big)^+]$ is non-negative and continuous on $[t, T]$. For the integral of a continuous, non-negative function to be exactly zero, the integrand must vanish identically on $[t, T]$. Evaluating at the initial time $s = t$ guarantees $(V(t, x) - \Phi(t, x))^+ = 0$, providing the pointwise constraint $V(t, x) \le \Phi(t, x)$ on $[0, T) \times H$. Moreover, the penalized terminal condition $u_\varepsilon(T, \cdot) = \Psi(\cdot)$ passes to the limit as $V(T, \cdot) = \Psi(\cdot)$. By Assumption \ref{ass:problem_data}, $\Psi \le \Phi(T, \cdot)$, ensuring the constraint is globally satisfied.

\emph{Step 3: Global maximality and uniqueness.}
To conclude that $V$ is the exact mild solution, we must prove it dominates any other mild sub-solution. Let $Z \in \mathcal{B}_m([0, T] \times H)$ be an arbitrary mild sub-solution. By Proposition \ref{prop:global_maximality}, the penalized approximants dominate $Z$, meaning:
\begin{equation*}
    Z(t, x) \le u_\varepsilon(t, x), \quad \forall (t, x) \in [0, T] \times H, \quad \forall \varepsilon > 0.
\end{equation*}
Passing to the pointwise limit as $\varepsilon \downarrow 0$ preserves the inequality, yielding $Z(t, x) \le V(t, x)$ for all $(t, x) \in [0, T] \times H$. This confirms that $V$ is the maximal mild sub-solution. Uniqueness follows immediatly, since any two exact mild solutions must mutually dominate each other, forcing them to coincide.
\end{proof}

\subsection{Identification with the Value Function}
\noindent We provide the definitive connection between the limit of the penalized family $u_\varepsilon$ and the probabilistic value function. By employing a martingale verification argument (see, e.g., \cite{BeLi82} for a similar proof in a finite-dimensional setting), we show that the limit map satisfies the optimality principle, thereby identifying it as the value function of the optimal stopping problem. Let $V = \lim_{\varepsilon \downarrow 0} u_\varepsilon$ denote the exact mild solution established in Theorem \ref{thm:convergence_exact}.

We begin by establishing that the exact mild solution provides a lower bound for the expected cost of any admissible stopping strategy.
\begin{proposition}[Lower Bound and Submartingale Property]
\label{prop:lower_bound}
Let Assumptions \ref{ass:coefficients} and \ref{ass:problem_data} hold. Assume that the obstacle $\Phi$ is admissible in the sense of Definition \ref{def:admissible_obstacle}. Let $V$ be the exact mild solution to the parabolic variational inequality in the sense of Definition \ref{def:mild_subsolutions}. For any initial state $(t, x) \in [0, T] \times H$ and any admissible stopping time $\tau \in \mathcal{T}_{t, T}$, the expected cost functional $J$ defined in \eqref{eq:def_cost} satisfies:
\begin{equation}
    V(t, x) \le J(t, x; \tau).
\end{equation}
\end{proposition}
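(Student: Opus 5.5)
We prove $V(t,x)\le J(t,x;\tau)$ by showing that the process
\[
S_s := V(s,X(s)) + \int_t^s f(r,X(r))\,\mathrm{d}r, \qquad s\in[t,T],
\]
with $X(\cdot)=X(\cdot;t,x)$, is an $\F$-submartingale with continuous paths. Optional sampling then gives $V(t,x)=S_t\le \E[S_\tau]$. Finally we use $V(\tau,X(\tau))\le \Phi(\tau,X(\tau))$ on $\{\tau<T\}$ and $V(T,\cdot)=\Psi$ on $\{\tau=T\}$, both from Step 2 of the proof of Theorem \ref{thm:convergence_exact}. Together these give $\E[S_\tau]\le J(t,x;\tau)$.

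\emph{Step 1: path regularity and integrability.} By Lemma \ref{lem:joint_continuity_V}, $V$ is jointly continuous, and $X$ has continuous paths, so $S$ is continuous. By Proposition \ref{prop:existence_limit}, $|V|\le C w_m$, and $f$ has $m$-polynomial growth. Hence Lemma \ref{lem:moment_estimates} gives $\E[\sup_{s\in[t,T]}|S_s|]<\infty$, so $S$ is of class (D) on $[t,T]$.

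\emph{Step 2: submartingale property.} Fix $t\le s\le s'\le T$. The Markov property of the mild flow, together with $X(s';t,x)=X(s';s,X(s))$, gives
\[
\E\big[V(s',X(s'))\,\big|\,\mathcal{F}_s\big]=P_{s,s'}[V(s',\cdot)](X(s)),
\]
and similarly $\E\big[\int_s^{s'} f(r,X(r))\,\mathrm{d}r\,\big|\,\mathcal{F}_s\big]=\int_s^{s'}P_{s,r}[f(r,\cdot)](X(s))\,\mathrm{d}r$. Lemma \ref{lem:subharmonicity_V}, evaluated at the point $X(s)$, then yields
\[
V(s,X(s))\le \E\Big[V(s',X(s'))+\int_s^{s'}f(r,X(r))\,\mathrm{d}r\,\Big|\,\mathcal{F}_s\Big],
\]
that is, $S_s\le \E[S_{s'}\mid\mathcal{F}_s]$. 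Alternatively, one can argue through the penalized problems. Theorem \ref{thm:feynman_kac} shows that $N^\varepsilon$ is a martingale, so $u_\varepsilon(s,X(s))+\int_t^s f\,\mathrm{d}r$ is a submartingale, because the subtracted penalty term is nonnegative. One then passes to the limit $\varepsilon\downarrow0$ by monotone or dominated convergence, using $u_\varepsilon\downarrow V$ and the uniform weighted bounds. This second route avoids evaluating the deterministic inequality at a random point.

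\emph{Step 3: optional sampling and conclusion.} Since $S$ is a continuous submartingale of class (D), optional sampling for bounded stopping times gives $S_t\le\E[S_\tau]$, and $S_t=V(t,x)$ is deterministic. Splitting $V(\tau,X(\tau))=V(\tau,X(\tau))\ind_{\{\tau<T\}}+\Psi(X(T))\ind_{\{\tau=T\}}$ and applying $V\le\Phi$ on $[0,T)\times H$ gives $V(t,x)\le J(t,x;\tau)$.

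The main obstacle is making Step 2 rigorous. The conditional-expectation identity needs the Markov property of the mild solution applied to the merely continuous, polynomially growing function $V(s',\cdot)$, with the correct measurability. The continuity of $S$ needed for optional sampling at arbitrary, possibly non-discrete, stopping times relies on Lemma \ref{lem:joint_continuity_V}, and this is exactly where admissibility of $\Phi$ is used. If there is any doubt, I would first prove optional sampling for stopping times taking finitely many values, and then approximate a general $\tau$ from above by dyadic $\tau_n\downarrow\tau$. Passing to the limit uses path continuity of $S$ together with uniform integrability from Step 1.
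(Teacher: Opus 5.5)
Your proposal is correct and follows essentially the same route as the paper. Both show that the compensated process $\int_t^s f(r,X(r))\,\mathrm{d}r + V(s,X(s))$ is a submartingale, via the Markov property and the dynamic sub-harmonicity of Lemma \ref{lem:subharmonicity_V}, then conclude by optional stopping together with $V\le\Phi$ on $[0,T)$ and $V(T,\cdot)=\Psi$. Your explicit justification of path continuity (through Lemma \ref{lem:joint_continuity_V}, which is where admissibility enters) and of uniform integrability makes precise what the paper's proof leaves implicit.
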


\begin{proof}
By Theorem \ref{thm:convergence_exact}, $V$ is a parabolic mild sub-solution. Along the stochastic flow, we define the compensated process:
\begin{equation}
\label{eq:proof_martingale_M}
    M_s := \int_t^s f(r, X(r)) \, \mathrm{d}r + V(s, X(s)), \quad s \in [t, T].
\end{equation} 
Assumption \ref{ass:problem_data}, the moment estimates from Lemma \ref{lem:moment_estimates} together with the growth condition obtained in Theorem \ref{thm:convergence_exact} ensure the integrability of $M_s$. For $t \le u \le s \le T$, the Markov property yields:
\begin{align}
\label{eq:proof_M_cond_exp}
    \E [M_s \mid \mathcal{F}_u] &= \int_t^u f(r, X(r)) \, \mathrm{d}r + \int_u^s P_{u,r}[f(r, \cdot)](X(u)) \, \mathrm{d}r + P_{u,s}[V(s, \cdot)](X(u)).
\end{align}
The dynamic sub-harmonicity of $V$ over $[u, s]$ implies that the last two terms in \eqref{eq:proof_M_cond_exp} are bounded below by $V(u, X(u))$, proving that $\E [M_s \mid \mathcal{F}_u] \ge M_u$. Thus, $M_s$ is a continuous $\F$-submartingale. For any admissible stopping strategy $\tau \in \mathcal{T}_{t, T}$, Doob's Optional Stopping Theorem provides $V(t, x) = M_t \le \E[M_\tau]$, which expands to:
\begin{equation}
\label{eq:proof_doob_eval}
    V(t, x) \le \E \left[ \int_t^\tau f(r, X(r)) \, \mathrm{d}r + V(\tau, X(\tau)) \right].
\end{equation}
Since $V \le \Phi$ on $[0, T)$ and $V(T, \cdot) = \Psi(\cdot)$, the terminal term satisfies:
\begin{equation}
\label{eq:proof_V_obstacle_bound}
 V(\tau, X(\tau)) \le \Phi(\tau, X(\tau))\ind_{\{\tau < T\}} + \Psi(X(T))\ind_{\{\tau = T\}}.
\end{equation}
Substituting \eqref{eq:proof_V_obstacle_bound} into \eqref{eq:proof_doob_eval} recovers the expected cost functional:
\begin{equation*}
 V(t, x) \le \E \left[ \int_t^\tau f(r, X(r)) \, \mathrm{d}r + \Phi(\tau, X(\tau))\ind_{\{\tau < T\}} + \Psi(X(T))\ind_{\{\tau = T\}} \right] = J(t, x; \tau).
\end{equation*}
This establishes the lower bound for all $\tau \in \mathcal{T}_{t, T}$.
\end{proof}

We now identify the exact mild solution with the optimal stopping value function and construct the optimal stopping time.
\begin{theorem}[Optimal Strategy and Value Function Identity]
\label{thm:optimal_strategy}
Let Assumptions \ref{ass:coefficients} and \ref{ass:problem_data} hold, and suppose the dynamic obstacle $\Phi$ is admissible in the sense of Definition \ref{def:admissible_obstacle}. Let $V$ be the exact mild solution to the parabolic variational inequality in the sense of Definition \ref{def:mild_subsolutions}. Then, $V$ coincides with the value function of the optimal stopping problem:
\begin{equation}
\label{eq:value_function_identity}
    V(t, x) = \inf_{\tau \in \mathcal{T}_{t, T}} J(t, x; \tau), \quad \forall (t, x) \in [0, T] \times H.
\end{equation}
Furthermore, the hitting time of the contact set, defined as
\begin{equation}
\label{eq:optimal_stopping_time}
    \tau^* := \inf \{ s \in [t, T] : V(s, X(s; t, x)) = \Phi(s, X(s; t, x)) \} \wedge T,
\end{equation}
is an optimal stopping strategy, yielding $V(t, x) = J(t, x; \tau^*)$.
\end{theorem}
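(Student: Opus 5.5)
By Proposition \ref{prop:lower_bound}, $V(t,x)\le J(t,x;\tau)$ for every $\tau\in\mathcal{T}_{t,T}$, so $V\le v$. It therefore suffices to show $V(t,x)\ge J(t,x;\tau^*)$. Two preliminary facts are needed. First, $\tau^*$ is a stopping time: $V$ and $\Phi$ are jointly continuous (Lemma \ref{lem:joint_continuity_V} and Assumption \ref{ass:problem_data}) and $X$ has continuous paths, so $\tau^*$ is the hitting time of the closed set $\{V=\Phi\}$ by a continuous adapted process. Second, on $\{\tau^*<T\}$ we have $V(\tau^*,X(\tau^*))=\Phi(\tau^*,X(\tau^*))$ by continuity, and on $\{\tau^*=T\}$ we have $V(T,X(T))=\Psi(X(T))$. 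Hence
\[
J(t,x;\tau^*)=\E\Big[\int_t^{\tau^*} f(r,X(r))\,\mathrm{d}r + V(\tau^*,X(\tau^*))\Big].
\]
The goal is thus to show that the process $M_s$ from \eqref{eq:proof_martingale_M}, stopped at $\tau^*$, is a martingale, or at least that $\E[M_{\tau^*}]\le V(t,x)$.

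I will obtain this from the penalized martingales $N^\varepsilon$ built in the proof of Theorem \ref{thm:feynman_kac}. Optional stopping at $\tau^*$ gives
\[
u_\varepsilon(t,x)=\E\Big[u_\varepsilon(\tau^*,X(\tau^*))+\int_t^{\tau^*}\big(f(r,X(r))-\Pi_\varepsilon(r,X(r))\big)\,\mathrm{d}r\Big].
\]
For $\delta>0$, let $\tau_\delta:=\inf\{s\ge t: V(s,X(s))\ge \Phi(s,X(s))-\delta\}\wedge T$. On $[t,\tau_\delta]$ we have $V\le\Phi-\delta$. The $u_\varepsilon$ are continuous and decrease pointwise to the continuous $V$, so Dini's theorem gives uniform convergence on compacts of $[0,T]\times H$. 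Since the path $\{(s,X(s)):s\in[t,T]\}$ is compact, for $\varepsilon$ small (depending on $\omega$) we get $u_\varepsilon<\Phi$ along the path up to $\tau_\delta$. Therefore $\Pi_\varepsilon(r,X(r))\to0$ pointwise for $r<\tau_\delta$. The admissibility bound $\Pi_\varepsilon\le K w_m$ together with Lemma \ref{lem:moment_estimates} provides the domination needed to conclude $\E\int_t^{\tau_\delta}\Pi_\varepsilon\,\mathrm{d}r\to0$. Replacing $\tau^*$ by $\tau_\delta$ in the identity above and letting $\varepsilon\downarrow0$, with dominated convergence for $u_\varepsilon(\tau_\delta,X(\tau_\delta))$ via the bound $|u_\varepsilon|\le\max(|u_1|,|Z_{low}|)$, yields
\[
V(t,x)=\E\Big[\int_t^{\tau_\delta}f\,\mathrm{d}r+V(\tau_\delta,X(\tau_\delta))\Big].
\]

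Finally, let $\delta\downarrow0$. The stopping times $\tau_\delta$ increase to some $\bar\tau\le\tau^*$. By path continuity, $V(\bar\tau,X(\bar\tau))=\Phi(\bar\tau,X(\bar\tau))$ whenever $\bar\tau<T$, so $\bar\tau=\tau^*$. Dominated convergence, using the polynomial growth of $V$ and $f$ and $\E\sup_s\|X(s)\|^m<\infty$, then gives $V(t,x)=\E[M_{\tau^*}]=J(t,x;\tau^*)\ge v(t,x)$. Combined with $V\le v$, this proves both the identity \eqref{eq:value_function_identity} and the optimality of $\tau^*$.

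The main obstacle is the vanishing of the penalty term before $\tau_\delta$, which requires a pathwise uniform convergence $u_\varepsilon\to V$. This is where joint continuity of $V$ (and hence admissibility) is essential, since Dini's theorem on the compact path image needs it. One subtlety is the $\omega$-dependence of the threshold $\varepsilon$; this is harmless because the argument only needs pointwise almost-sure convergence of the integrand before applying dominated convergence.
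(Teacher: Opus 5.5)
Your proposal is correct and follows essentially the same route as the paper. Both use the lower bound from Proposition \ref{prop:lower_bound}, optional stopping of the penalized martingales $N^\varepsilon$ at the approximating exit times (your $\tau_\delta$ correspond to the paper's $\sigma_n$ with $\delta=1/n$), and a pathwise vanishing of the penalty on the compact path image, which you get from Dini's theorem and the paper gets from an explicit finite-subcover argument using the same monotonicity. The remaining steps also match: dominated convergence via admissibility, then letting the level go to zero. You additionally verify that $\tau^*$ is a stopping time and that $\tau_\delta\uparrow\tau^*$, which the paper only asserts.
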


\begin{proof}
Let $\mathcal{C} := \{ (s, y) \in [0, T) \times H : V(s, y) < \Phi(s, y) \}$ be the continuation region. Consider the closed subsets
\begin{equation*}
    \mathcal{C}_n := \left\{ (s, y) \in [0, T) \times H : \Phi(s, y) - V(s, y) \ge \frac{1}{n} \right\},
\end{equation*}
and the associated exit times $\sigma_n := \inf\{s \ge t : (s, X(s)) \notin \mathcal{C}_n\} \wedge T$. As established in the Proof of Theorem \ref{thm:feynman_kac} for $\varepsilon > 0$, the compensated process
\begin{equation*}
    N^\varepsilon_s := \int_t^s \left( f(r, X(r)) - \frac{1}{\varepsilon}(u_\varepsilon(r, X(r)) - \Phi(r, X(r)))^+ \right) \mathrm{d}r + u_\varepsilon(s, X(s))
\end{equation*}
is a continuous $\F$-martingale. By Doob's Optional Stopping Theorem at $\sigma_n$:
\begin{equation*} 
    u_\varepsilon(t, x) + \E \left[ \int_t^{\sigma_n} \frac{\left( u_\varepsilon(r, X(r)) - \Phi(r, X(r)) \right)^+}{\varepsilon} \, \mathrm{d}r \right] = \E \left[ \int_t^{\sigma_n} f(r, X(r)) \, \mathrm{d}r + u_\varepsilon(\sigma_n, X(\sigma_n)) \right].
\end{equation*}
Fix a sample path $\omega \in \Omega$. On the compact interval $I := [t, \sigma_n(\omega)]$, we have $V(s, X(s)) < \Phi(s, X(s))$ by construction. Since $u_\varepsilon \downarrow V$ pointwise, for each $s \in I$ there exists $\varepsilon_s > 0$ such that $u_{\varepsilon_s}(s, X(s)) < \Phi(s, X(s))$. The joint continuity of $u_{\varepsilon_s}$ and $\Phi$ in $\mathcal{C}_m([0, T] \times H)$, alongside the pathwise continuity of the state trajectories, ensures that $u_{\varepsilon_s}(r, X(r)) < \Phi(r, X(r))$ on an open neighborhood $O_s$ of $s$ in $I$. By the compactness of $I$, there exists a finite subcover $O_{s_1}, \dots, O_{s_k}$. Setting $\bar{\varepsilon} := \min_{i} \varepsilon_{s_i} > 0$, the monotonicity in Proposition \ref{prop:penalty_monotonicity} ensures that for all $\varepsilon \le \bar{\varepsilon}$ and $r \in I$:
\begin{equation*}
    u_\varepsilon(r, X(r)) \le u_{\bar{\varepsilon}}(r, X(r)) < \Phi(r, X(r)).
\end{equation*}
Thus, $\mathbb{P}$-a.s., the penalization term vanishes for sufficiently small $\varepsilon$. Since the obstacle $\Phi$ is admissible in the sense of Definition \ref{def:admissible_obstacle}, we can apply the Dominated Convergence Theorem, obtaining:
\begin{equation*}
    V(t, x) = \E \left[ \int_t^{\sigma_n} f(r, X(r)) \, \mathrm{d}r + V(\sigma_n, X(\sigma_n)) \right].
\end{equation*}
As $n \to \infty$, $\sigma_n \uparrow \tau^*$ $\mathbb{P}$-a.s. By the pathwise continuity of the solution, Lemma \ref{lem:joint_continuity_V} and a second application of the Dominated Convergence Theorem:
\begin{equation*}
    V(t, x) = \E \left[ \int_t^{\tau^*} f(r, X(r)) \, \mathrm{d}r + V(\tau^*, X(\tau^*)) \right].
\end{equation*}
Decomposing the terminal term $V(\tau^*, X(\tau^*))$ gives:
\begin{equation*}
    V(\tau^*, X(\tau^*)) = V(\tau^*, X(\tau^*))\ind_{\{\tau^* < T\}} + V(T, X(T))\ind_{\{\tau^* = T\}}.
\end{equation*}
Since $V(\tau^*, X(\tau^*)) = \Phi(\tau^*, X(\tau^*))$ on $\{\tau^* < T\}$ and $V(T, X(T)) = \Psi(X(T))$, we recover the cost functional:
\begin{equation*}
    V(t, x) = \E \left[ \int_t^{\tau^*} f(r, X(r)) \, \mathrm{d}r + \Phi(\tau^*, X(\tau^*))\ind_{\{\tau^* < T\}} + \Psi(X(T))\ind_{\{\tau^* = T\}} \right] = J(t, x; \tau^*).
\end{equation*}
Given $V(t, x) \le J(t, x; \tau)$ for all $\tau \in \mathcal{T}_{t, T}$ from Proposition \ref{prop:lower_bound}, it follows that $V(t, x) = \inf_{\tau} J(t, x; \tau)$.
\end{proof}

\section{Regularity of Solutions}
\label{sec:verification}
\noindent In this section we address the spatial regularity of the value function. By exploiting the intrinsic smoothing properties of the transition semigroup, we upgrade the solution's regularity from mere continuity to continuous Fréchet differentiability and local Hölder continuity of the gradient. In the language of optimal stopping, this yields the corresponding smooth-fit property.

\subsection{Semigroup Regularization}
\label{sec:semigroup}

The regularizing properties of the transition semigroup are described by the following qualitative differentiability assumption and associated quantitative gradient estimates. Let $\{P_{t,s}\}_{0\leq t \leq s \leq T}$ be the two parameter transition evolution family defined in Definition \ref{def:evolution_operators}. 

\begin{assumption}[Global Smoothing Properties]
\label{ass:smoothing}
\begin{enumerate}[label={(\textit{\roman*})}, leftmargin=*]
\item[]
    \item \emph{Spatial Smoothing.} For any $0 \le t < s \le T$ and $\varphi \in \mathcal{B}_m(H)$, the mapping $x \mapsto P_{t,s}[\varphi](x)$ is twice continuously Fréchet differentiable on $H$.
    
    \item \emph{Singular Gradient Estimates.} There exist constants $C_1 > 0$ and $\gamma \in (0, 1)$ such that for all $x \in H$, $0 \le t < s \le T$, and $\varphi \in \mathcal{B}_m(H)$, the spatial Fréchet derivatives satisfy:
    \begin{gather}
        \|\nabla P_{t,s}[\varphi](x)\|_H \le C_1 (s-t)^{-\gamma} w_m(x) \|\varphi\|_{\mathcal{C}_m}, \label{eq:grad1_bound} \\
        \|\nabla^2 P_{t,s}[\varphi](x)\|_{\mathcal{L}(H)} \le C_1 (s-t)^{-2\gamma} w_m(x) \|\varphi\|_{\mathcal{C}_m}. \label{eq:grad2_bound}
    \end{gather}
\end{enumerate}
\end{assumption}

\begin{remark}
The estimates in \eqref{eq:grad1_bound} and \eqref{eq:grad2_bound} allow the spatial derivatives of $P_{t,s}[\varphi]$ to become singular as $s \downarrow t$, which is unavoidable even in classical parabolic problems. The relevant point is that the singularity is integrable in time. The condition $\gamma<1$ ensures that the first-order estimate can be integrated in the mild representation, while the stronger restriction $\gamma(1+\beta)<1$ appearing below is precisely the condition under which the H\"older estimate for the gradient is also integrable. In this sense, the exponent $\gamma$ measures the strength of the semigroup regularization: the smaller $\gamma$ is, the stronger the smoothing effect and the larger the admissible H\"older exponents for the gradient.
\end{remark}

By interpolating the singular bounds of the first and second Fréchet derivatives, we can establish the local equi-Hölder continuity of the semigroup gradient.
\begin{lemma}[Hölder Regularity of the Transitioned Source]
\label{lem:spatial_transition_reg}
Let Assumptions \ref{ass:coefficients} and \ref{ass:smoothing} hold, and let $\varphi \in \mathcal{B}_m(H)$. For any $0 \le t < s \le T$ and any Hölder exponent $\beta \in (0, 1)$, the Fréchet gradient of the mapping $x \mapsto P_{t,s}[\varphi](x)$ is locally equi-Hölder continuous. Specifically, there exists a constant $C_3 > 0$ such that for all $x, y \in H$:
\begin{equation}
\label{eq:transition_holder_bound}
    \|\nabla P_{t,s}[\varphi](x) - \nabla P_{t,s}[\varphi](y)\|_H \le C_3 (s-t)^{-\gamma(1+\beta)} \Big(1 + \sup_{z \in [x,y]} \|z\|_H^m\Big) \|\varphi\|_{\mathcal{B}_m} \|x - y\|_H^\beta.
\end{equation}
\end{lemma}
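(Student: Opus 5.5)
The natural route is interpolation between two elementary estimates for the increment $D := \nabla P_{t,s}[\varphi](x) - \nabla P_{t,s}[\varphi](y)$. Throughout we set $\delta := \|x-y\|_H$, $\tau := s-t$, and $S := \sup_{z\in[x,y]} w_m(z) \le 2\bigl(1+\sup_{z\in[x,y]}\|z\|_H^m\bigr)$. We also use that the seminorm $\|\cdot\|_{\mathcal{C}_m}$ in Assumption \ref{ass:smoothing} coincides with $\|\cdot\|_{\mathcal{B}_m}$, as fixed in Definition \ref{def:weighted_topology}.

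\emph{Step 1 (the crude bound).} By the triangle inequality and \eqref{eq:grad1_bound} applied at $x$ and at $y$,
\begin{equation*}
\|D\|_H \le C_1 \tau^{-\gamma}\bigl(w_m(x)+w_m(y)\bigr)\|\varphi\|_{\mathcal{B}_m} \le 2C_1\tau^{-\gamma} S \|\varphi\|_{\mathcal{B}_m},
\end{equation*}
since $x,y\in[x,y]$.

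\emph{Step 2 (the Lipschitz bound).} Assumption \ref{ass:smoothing}(i) makes $\nabla P_{t,s}[\varphi]$ continuously Fréchet differentiable. The mean value inequality along the segment $z_\theta := y+\theta(x-y)$, $\theta\in[0,1]$, i.e. $D = \int_0^1 \nabla^2 P_{t,s}[\varphi](z_\theta)(x-y)\,\mathrm{d}\theta$, combined with \eqref{eq:grad2_bound}, gives
\begin{equation*}
\|D\|_H \le C_1 \tau^{-2\gamma} S \|\varphi\|_{\mathcal{B}_m}\,\delta.
\end{equation*}

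\emph{Step 3 (interpolation).} Write $\|D\|_H = \|D\|_H^{1-\beta}\|D\|_H^{\beta}$, bound the first factor by Step 1 and the second by Step 2. This yields
\begin{equation*}
\|D\|_H \le 2^{1-\beta}C_1\,\tau^{-\gamma(1-\beta)-2\gamma\beta} S\|\varphi\|_{\mathcal{B}_m}\delta^\beta = 2^{1-\beta}C_1\,\tau^{-\gamma(1+\beta)} S\|\varphi\|_{\mathcal{B}_m}\delta^\beta,
\end{equation*}
which is \eqref{eq:transition_holder_bound} with $C_3 := 4C_1$, using $S\le 2(1+\sup\|z\|^m)$. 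The constant $C_3$ is independent of $t,s,x,y,\varphi$, which gives the equi-Hölder statement.

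The only genuinely delicate point is Step 2. It needs the second derivative bound uniformly along the whole segment, which is exactly why the weight in \eqref{eq:transition_holder_bound} is a supremum over $[x,y]$ rather than a value at the endpoints. It also needs the Bochner integral form of the mean value theorem for the $\mathcal{L}(H)$-valued continuous map $\theta\mapsto\nabla^2P_{t,s}[\varphi](z_\theta)$, which is justified by the $C^2$ regularity in Assumption \ref{ass:smoothing}(i).
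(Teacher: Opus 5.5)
Your proof is correct and follows the paper's argument: a crude bound from \eqref{eq:grad1_bound}, a Lipschitz bound from the mean value theorem and \eqref{eq:grad2_bound}, and interpolation with exponent $\beta$. The only difference is cosmetic. Since $\sup_{z\in[x,y]} w_m(z) = 1+\sup_{z\in[x,y]}\|z\|_H^m$ exactly, your extra factor $2$ is unnecessary, and dropping it recovers the paper's constant $C_3 = 2^{1-\beta}C_1$.
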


\begin{proof}
The proof exploits a standard interpolation technique, balancing the uniform variation against the Lipschitz bound provided by the Mean Value Theorem. Let $x, y \in H$. By applying the triangle inequality alongside the first-order singular estimate \eqref{eq:grad1_bound} from Assumption \ref{ass:smoothing}, we obtain the uniform variation bound:
\begin{equation} 
\label{eq:lem_grad_variation_1}
    \| \nabla P_{t,s}[\varphi](x) - \nabla P_{t,s}[\varphi](y) \|_H \le 2 \sup_{z \in [x,y]} \|\nabla P_{t,s}[\varphi](z)\|_H \le 2 C_1 (s-t)^{-\gamma} \Big(1 + \sup_{z \in [x,y]} \|z\|_H^m\Big) \|\varphi\|_{\mathcal{B}_m}.
\end{equation}
Alternatively, applying the Mean Value Theorem and the second-order estimate \eqref{eq:grad2_bound} yields the Lipschitz bound:
\begin{align} 
\label{eq:lem_grad_variation_2}
    \| \nabla P_{t,s}[\varphi](x) - \nabla P_{t,s}[\varphi](y) \|_H &\le \left( \sup_{z \in [x,y]} \|\nabla^2 P_{t,s}[\varphi](z)\|_{\mathcal{L}(H)} \right) \|x - y\|_H \nonumber \\
    &\le C_1 (s-t)^{-2\gamma} \Big(1 + \sup_{z \in [x,y]} \|z\|_H^m\Big) \|\varphi\|_{\mathcal{B}_m} \|x - y\|_H.
\end{align}
Interpolating the bounds \eqref{eq:lem_grad_variation_1} and \eqref{eq:lem_grad_variation_2} with exponent $\beta \in (0, 1)$ gives:
\begin{equation}
    \| \nabla P_{t,s}[\varphi](x) - \nabla P_{t,s}[\varphi](y) \|_H \le \left( 2 \sup_{z\in [x,y]} \|\nabla P_{t,s}[\varphi](z)\|_H \right)^{1-\beta} \left( \sup_{z\in [x,y]} \|\nabla^2 P_{t,s}[\varphi](z)\|_{\mathcal{L}(H)} \|x - y\|_H \right)^\beta. 
\end{equation}
Setting $C_3 := 2^{1-\beta} C_1$ and substituting the singular estimates into this interpolation yields exactly the bound \eqref{eq:transition_holder_bound}, concluding the proof.
\end{proof}

We now show that temporal convolutions inherit continuous differentiability and local Hölder regularity of the gradient.
\begin{proposition}[Spatial Regularity of Convolutions]
\label{prop:spatial_conv}
Let Assumptions \ref{ass:coefficients} and \ref{ass:smoothing} hold, and let $f \in \mathcal{B}_m([0, T] \times H)$. For any $t \in [0, T)$, the temporal convolution
\begin{equation}
    g(t, x) := \int_t^T P_{t,s}[f(s, \cdot)](x) \, \mathrm{d}s
\end{equation}
satisfies the following regularity properties on $H$:
\begin{enumerate}[label={(\textit{\roman*})}, leftmargin=*]
\item The mapping $g(t, \cdot)$ is continuously Fréchet differentiable on $H$. The spatial differential operator commutes with the temporal integral, and there exists a constant $C_2 > 0$ such that the following bound holds:
    \begin{equation}\label{eq:spatial_conv_grad} 
        \|\nabla g(t, x)\|_H \le C_2 \frac{(T-t)^{1-\gamma}}{1-\gamma} w_m(x) \|f\|_{\mathcal{B}_{m, T}}.
    \end{equation}
    
    \item The spatial gradient $x \mapsto \nabla g(t, x)$ is locally equi-Hölder continuous. Specifically, for any Hölder exponent $\beta \in (0, 1)$ satisfying $\gamma(1+\beta) < 1$, the following estimate holds for all $x, y \in H$:
    \begin{equation}
        \|\nabla g(t, x) - \nabla g(t, y)\|_H \le C_3 \frac{(T-t)^{1-\gamma(1+\beta)}}{1-\gamma(1+\beta)} \Big(1 + \sup_{z \in [x,y]} \|z\|_H^m\Big) \|f\|_{\mathcal{B}_{m, T}} \|x - y\|_H^{\beta}.
    \end{equation}
\end{enumerate}
\end{proposition}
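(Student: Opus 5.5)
The proof differentiates under the time integral, justified by the singular bounds of Assumption \ref{ass:smoothing} together with integrability of $(s-t)^{-\gamma}$ and $(s-t)^{-\gamma(1+\beta)}$ near $s=t$. Fix $t\in[0,T)$ and set $h_s(x):=P_{t,s}[f(s,\cdot)](x)$ for $s\in(t,T]$. By Assumption \ref{ass:smoothing}(i), each $h_s$ is $\mathcal{C}^2$ on $H$. Since $\|f(s,\cdot)\|_{\mathcal{B}_m}\le \|f\|_{\mathcal{B}_{m,T}}$, the estimate \eqref{eq:grad1_bound} gives
$$\|\nabla h_s(z)\|_H\le C_1(s-t)^{-\gamma}\Big(1+\sup_{z'\in B}\|z'\|_H^m\Big)\|f\|_{\mathcal{B}_{m,T}}$$
uniformly for $z$ in any bounded set $B$. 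Here I read the norm on the right of \eqref{eq:grad1_bound} as the weighted sup-norm $\|\cdot\|_{\mathcal{B}_m}$, as Lemma \ref{lem:spatial_transition_reg} already does. One technical point needs checking: measurability of $s\mapsto \nabla h_s(x)$. I would obtain it as the pointwise limit of the measurable difference quotients $(h_s(x+\delta k)-h_s(x))/\delta$. Each of these is measurable in $s$, by Fubini applied to $\E[f(s,X(s;t,x))]$ and joint measurability of the flow.

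\emph{Step 1 (differentiability and commutation).} For $x,k\in H$ with $\|k\|_H\le 1$, write
$$g(t,x+k)-g(t,x)-\int_t^T\langle\nabla h_s(x),k\rangle_H\,\mathrm{d}s=\int_t^T\int_0^1\langle\nabla h_s(x+\theta k)-\nabla h_s(x),k\rangle_H\,\mathrm{d}\theta\,\mathrm{d}s,$$
using the mean value theorem in integral form for each fixed $s$. Lemma \ref{lem:spatial_transition_reg} bounds the inner integrand by $C_3(s-t)^{-\gamma(1+\beta)}(1+(\|x\|_H+1)^m)\|f\|_{\mathcal{B}_{m,T}}\|k\|_H^{1+\beta}$. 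Choose any $\beta>0$ small enough that $\gamma(1+\beta)<1$, which is possible since $\gamma<1$. The right-hand side is then $O(\|k\|_H^{1+\beta})=o(\|k\|_H)$. Hence $g(t,\cdot)$ is Fréchet differentiable with $\nabla g(t,x)=\int_t^T\nabla h_s(x)\,\mathrm{d}s$, understood as a Bochner integral in $H$; Bochner integrability follows from the majorant $(s-t)^{-\gamma}$.

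\emph{Step 2 (the bounds).} The gradient bound \eqref{eq:spatial_conv_grad} follows by integrating \eqref{eq:grad1_bound}:
$$\int_t^T C_1(s-t)^{-\gamma}\,\mathrm{d}s=C_1\frac{(T-t)^{1-\gamma}}{1-\gamma},$$
so one may take $C_2=C_1$. For item (ii), subtract the representations of $\nabla g(t,x)$ and $\nabla g(t,y)$, apply Lemma \ref{lem:spatial_transition_reg} inside the integral, and compute
$$\int_t^T(s-t)^{-\gamma(1+\beta)}\,\mathrm{d}s=\frac{(T-t)^{1-\gamma(1+\beta)}}{1-\gamma(1+\beta)},$$
which is finite exactly when $\gamma(1+\beta)<1$.

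\emph{Step 3 (continuity of the gradient).} Item (ii) makes $\nabla g(t,\cdot)$ locally Hölder continuous, hence continuous. This gives the $\mathcal{C}^1$ claim of (i).

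The main obstacle is Step 1, namely justifying the exchange of the Fréchet derivative with the singular time integral. Pointwise differentiability of each $h_s$ is not enough; what is needed is an integrable remainder estimate that is uniform in $s$. Routing the remainder through the Hölder estimate of Lemma \ref{lem:spatial_transition_reg}, rather than through dominated convergence on difference quotients, handles this cleanly and avoids any need for uniform continuity of $\nabla h_s$ in $s$.
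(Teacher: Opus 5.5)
Your argument is correct. It differs from the paper's only in how you justify differentiating under $\int_t^T$ in (i). Your Step 2 computations for both bounds, including the choice $C_2=C_1$, are exactly the paper's.

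The paper uses only the first-order estimate \eqref{eq:grad1_bound}. It takes $C_1(s-t)^{-\gamma}\sup_{z\in B_1(x)}w_m(z)\,\|f\|_{\mathcal{B}_{m,T}}$ as an integrable majorant and applies dominated convergence, together with the $\mathcal{C}^1$ regularity of each $P_{t,s}[f(s,\cdot)]$. You instead bound the Fréchet remainder through Lemma \ref{lem:spatial_transition_reg} with an auxiliary small exponent. This gives Fréchet (not just Gâteaux) differentiability at once, with an explicit $O(\|k\|_H^{1+\beta})$ remainder and no Gâteaux-to-Fréchet upgrade. The cost is that (i) now relies on the second-order bound \eqref{eq:grad2_bound}, which the paper's route does not need there. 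Your handling of the measurability of $s\mapsto\nabla P_{t,s}[f(s,\cdot)](x)$ fills in a point the paper leaves implicit: difference quotients give weak measurability, and strong measurability follows by separability of $H$.

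One correction to your closing remark: dominated convergence needs no uniformity in $s$. Fix $x$ and $0<\delta\le 1$, and set
$\rho_s(\delta):=\sup_{0<\|k\|_H\le\delta}|h_s(x+k)-h_s(x)-\langle\nabla h_s(x),k\rangle_H|/\|k\|_H$.
\begin{itemize}
\item For each fixed $s$, $\rho_s(\delta)\to 0$ as $\delta\to0$, simply because $h_s$ is Fréchet differentiable at $x$.
\item $\rho_s(\delta)$ is dominated by $2C_1(s-t)^{-\gamma}\sup_{z\in B_1(x)}w_m(z)\,\|f\|_{\mathcal{B}_{m,T}}$, which is integrable in $s$.
\item $\rho_s(\delta)$ is measurable in $s$, since the sup can be taken over a countable dense set.
\end{itemize}
So pointwise differentiability of each $h_s$ plus the first-order bound does suffice. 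Your Hölder-based route is a cleaner quantitative alternative, not a necessity.
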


\begin{proof} The proof proceeds in two main steps, addressing each claim of the Proposition.
\begin{enumerate}[label={(\textit{\roman*})}, leftmargin=*]
\item Under Assumption \ref{ass:smoothing}, $x \mapsto P_{t,s}[f(s, \cdot)](x)$ is Fréchet differentiable for $s \in (t, T]$. The first-order singular estimate \eqref{eq:grad1_bound} provides the local uniform bound $C_1 w_m(x) \|f\|_{\mathcal{C}_{m, T}} (s-t)^{-\gamma}$. Since $\gamma \in (0, 1)$, this bound is Lebesgue-integrable over $[t, T]$. This justifies the commutation of the gradient and the parameter-dependent integral by the Dominated Convergence Theorem:
\begin{align}
    \|\nabla g(t, x)\|_H &\le \int_t^T \|\nabla P_{t,s}[f(s, \cdot)](x)\|_H \, \mathrm{d}s \nonumber \\ &\le C_1 w_m(x) \|f\|_{\mathcal{B}_{m, T}} \int_t^T (s-t)^{-\gamma} \, \mathrm{d}s = \frac{C_1 (T-t)^{1-\gamma}}{1-\gamma} w_m(x) \|f\|_{\mathcal{B}_{m, T}},
\end{align}
and we obtain the thesis defining $C_{2}:=C_{1}$.

\item For $x, y \in H$, the spatial variation of the gradient is bounded by integrating the Hölder estimate obtained in Lemma \ref{lem:spatial_transition_reg}:
\begin{equation*}
    \|\nabla g(t, x) - \nabla g(t, y)\|_H \le \int_t^T \| \nabla P_{t,s}[f(s,\cdot)](x) - \nabla P_{t,s}[f(s,\cdot)](y) \|_H \, \mathrm{d}s.
\end{equation*}
Substituting the explicit bound \eqref{eq:transition_holder_bound} yields:
\begin{equation*}
    \|\nabla g(t, x) - \nabla g(t, y)\|_H \le C_3 \Big(1 + \sup_{z \in [x,y]} \|z\|_H^m\Big) \|f\|_{\mathcal{B}_{m, T}} \|x - y\|_H^\beta \int_t^T (s-t)^{-\gamma(1+\beta)} \, \mathrm{d}s.
\end{equation*}
The temporal integral converges if and only if $\gamma(1+\beta) < 1$. Evaluating the integral confirms the desired local equi-Hölder continuity.
\end{enumerate}
\end{proof}

\subsection{Regularity of the Penalized Solutions}
\label{sec:frechet}

We prove the continuous spatial Fréchet differentiability of the parabolic approximants $u_\varepsilon(t, \cdot)$ on the Hilbert space $H$. The existence of these spatial gradients is deduced from the regularizing properties of the transition semigroup $P_{t,s}$ operating within the space $\mathcal{B}_m([0, T] \times H)$.

We first establish the regularity of the penalized approximants, provided the transition semigroup satisfies the required smoothing assumptions.
\begin{lemma}[Global Regularity and Gradient Estimates]
\label{lem:global_reg_approx}
Let Assumptions \ref{ass:smoothing}, \ref{ass:coefficients}, and \ref{ass:problem_data} hold. For any $\varepsilon > 0$ and $t \in [0, T)$, the mild solution $u_\varepsilon \in \mathcal{C}_m([0, T] \times H)$ to equation \eqref{eq:hjb_penalized} satisfies:
\begin{enumerate}[label={(\textit{\roman*})}, leftmargin=*]
    \item \emph{Continuous Differentiability.} The mapping $u_\varepsilon(t, \cdot)$ is continuously Fréchet differentiable on $H$. Defining the penalty term $\Pi_\varepsilon := \frac{1}{\varepsilon}(u_\varepsilon - \Phi)^+$, the spatial gradient satisfies:
    \begin{equation}
    \label{eq:exact_grad_bound}
        \|\nabla u_\varepsilon(t, x)\|_H \le C_1 \left[ \|\Psi\|_{\mathcal{B}_m} (T-t)^{-\gamma} + \big( \|f\|_{\mathcal{B}_{m, T}} + \|\Pi_\varepsilon\|_{\mathcal{B}_{m, T}} \big) \frac{(T-t)^{1-\gamma}}{1-\gamma} \right] w_m(x), \quad \forall x \in H.
    \end{equation}
    
    \item \emph{Local Equi-Hölder Continuity.} For any radius $R > 0$ and exponent $\beta \in (0, 1)$ satisfying $\gamma(1+\beta) < 1$, the spatial gradient is locally $\beta$-Hölder continuous. Setting $C_R := C_{3}(1 + R^m)$, for all $x, y \in B_R \subset H$:
    \begin{equation}
    \label{eq:exact_holder_grad}
        \|\nabla u_\varepsilon(t, x) - \nabla u_\varepsilon(t, y)\|_H \le C_R \left[ \|\Psi\|_{\mathcal{B}_m} (T-t)^{-\gamma(1+\beta)} + \big( \|f\|_{\mathcal{B}_{m, T}} + \|\Pi_\varepsilon\|_{\mathcal{B}_{m, T}} \big) \frac{(T-t)^{1-\gamma(1+\beta)}}{1-\gamma(1+\beta)} \right] \|x - y\|_H^\beta.
    \end{equation}
\end{enumerate}
\end{lemma}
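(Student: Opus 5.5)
The plan is to start from the direct mild formulation \eqref{eq:direct_formulation} and write it as
\begin{equation*}
    u_\varepsilon(t, x) = P_{t,T}[\Psi](x) + \int_t^T P_{t,s}\big[ f(s,\cdot) - \Pi_\varepsilon(s,\cdot) \big](x)\, \mathrm{d}s .
\end{equation*}
I then apply the smoothing results to each of the two pieces. By Corollary \ref{cor:continuity_penalized}, $u_\varepsilon \in \mathcal{C}_m([0,T]\times H)$. Since $\Phi$ is continuous with polynomial growth, the source $h_\varepsilon := f - \Pi_\varepsilon$ lies in $\mathcal{C}_m([0,T]\times H)$. For fixed $\varepsilon$ its norm is finite, because
\begin{equation*}
    \|\Pi_\varepsilon\|_{\mathcal{B}_{m,T}} \le \varepsilon^{-1}\big(\|u_\varepsilon\|_{\mathcal{B}_{m,T}} + \|\Phi\|_{\mathcal{B}_{m,T}}\big).
\end{equation*}
The triangle inequality then gives $\|h_\varepsilon\|_{\mathcal{B}_{m,T}} \le \|f\|_{\mathcal{B}_{m,T}} + \|\Pi_\varepsilon\|_{\mathcal{B}_{m,T}}$, and this is exactly the combination that appears in \eqref{eq:exact_grad_bound} and \eqref{eq:exact_holder_grad}. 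No admissibility is needed here: the lemma is stated for fixed $\varepsilon$ with the explicit dependence on $\|\Pi_\varepsilon\|$. Uniformity in $\varepsilon$ will only come later, through Definition \ref{def:admissible_obstacle}.

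For part (i), the terminal term is handled directly by Assumption \ref{ass:smoothing}(i) with $s=T>t$. This gives $P_{t,T}[\Psi] \in C^2$, hence $C^1$. The bound \eqref{eq:grad1_bound} then yields $\|\nabla P_{t,T}[\Psi](x)\|_H \le C_1 (T-t)^{-\gamma} w_m(x)\|\Psi\|_{\mathcal{B}_m}$; here I read $\|\cdot\|_{\mathcal{C}_m}$ as the same weighted sup norm. For the integral term I invoke Proposition \ref{prop:spatial_conv}(i) with source $h_\varepsilon$. It gives Fréchet differentiability, commutation of $\nabla$ with $\int_t^T$, and the bound $C_1\frac{(T-t)^{1-\gamma}}{1-\gamma} w_m(x)\|h_\varepsilon\|_{\mathcal{B}_{m,T}}$. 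Adding the two estimates gives \eqref{eq:exact_grad_bound}.

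The step that needs care is continuity of $x\mapsto \nabla u_\varepsilon(t,x)$, since Proposition \ref{prop:spatial_conv}(i) asserts it but its proof only sketches the argument. I would obtain it in one of two ways. The first is dominated convergence: each $\nabla P_{t,s}[h_\varepsilon(s,\cdot)]$ is continuous by Assumption \ref{ass:smoothing}(i), and it is dominated by $C_1 (s-t)^{-\gamma}\sup_{B_R} w_m$, which is integrable since $\gamma<1$. The second, more directly, is to use part (ii) below: a locally Hölder gradient is continuous.

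For part (ii), I fix $R>0$ and take $x,y\in B_R$. By convexity the segment $[x,y]$ lies in $B_R$, so $1+\sup_{z\in[x,y]}\|z\|_H^m \le 1+R^m$. For the terminal term I apply Lemma \ref{lem:spatial_transition_reg} with $s=T$ and $\varphi=\Psi$, which gives the factor $C_3(1+R^m)(T-t)^{-\gamma(1+\beta)}\|\Psi\|_{\mathcal{B}_m}$. For the convolution I apply Proposition \ref{prop:spatial_conv}(ii) to $h_\varepsilon$, which is valid because $\gamma(1+\beta)<1$ makes the time singularity integrable. Summing the two bounds with $C_R=C_3(1+R^m)$ gives \eqref{eq:exact_holder_grad}.
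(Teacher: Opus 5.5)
Your proposal is correct and follows essentially the paper's proof. You split the direct mild formulation into $P_{t,T}[\Psi]$ plus the convolution with source $f-\Pi_\varepsilon$. For (i) you apply the singular gradient estimate \eqref{eq:grad1_bound} with an integrable singularity, and for (ii) you apply Lemma \ref{lem:spatial_transition_reg} and the integrated H\"older bound, using $\sup_{z\in[x,y]} w_m(z)\le 1+R^m$. The differences are minor: you cite Proposition \ref{prop:spatial_conv} for the integral term where the paper redoes that argument inline, and you make the continuity of $x\mapsto\nabla u_\varepsilon(t,x)$ explicit where the paper leaves it implicit.
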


\begin{proof}
The proof relies on differentiating the direct mild integral equation and controlling the resulting singular integrals. We analyze the direct formulation governing the penalized approximant (Definition \ref{def:mild_formulation}):
\begin{equation} 
\label{eq:direct_integral_rep_u_eps}
    u_\varepsilon(t, x) = P_{t,T}[\Psi](x) + \int_t^T P_{t,s} [H_\varepsilon(s, \cdot)](x) \, \mathrm{d}s,
\end{equation}
where the effective source term is defined as $H_\varepsilon(s, x) := f(s, x) - \Pi_\varepsilon(s, x)$. Since $f, \Phi, u_\varepsilon \in \mathcal{B}_m([0, T] \times H)$, the penalty term $\Pi_\varepsilon$ and the source $H_\varepsilon$ belong to the same space. By the triangle inequality, we have the norm bound $\|H_\varepsilon\|_{\mathcal{B}_{m, T}} \le \|f\|_{\mathcal{B}_{m, T}} + \|\Pi_\varepsilon\|_{\mathcal{B}_{m, T}}$.

\begin{enumerate}[label={(\textit{\roman*})}, leftmargin=*]
    \item \emph{Proof of continuous differentiability.} Under Assumption \ref{ass:smoothing}, the spatial gradient of the transitioned source satisfies:
    \begin{equation*}
        \|\nabla P_{t,s} [H_\varepsilon(s, \cdot)](y)\|_H \le C_1 (s-t)^{-\gamma} w_m(y) \|H_\varepsilon(s, \cdot)\|_{\mathcal{B}_m}.
    \end{equation*}
    For any fixed $x \in H$, $w_m$ is locally bounded on the closed ball $B_1(x)$. Since $\gamma \in (0, 1)$, the singularity is Lebesgue-integrable over $[t, T]$. Combined with the terminal bound $\|\nabla P_{t,T}[\Psi](x)\|_H \le C_1 (T-t)^{-\gamma} w_m(x) \|\Psi\|_{\mathcal{B}_m}$, we justify the commutation of the Fréchet differential with the integral:
    \begin{equation} 
    \label{eq:direct_bochner_grad_u_eps}
        \nabla u_\varepsilon(t, x) = \nabla P_{t,T}[\Psi](x) + \int_t^T \nabla P_{t,s} [H_\varepsilon(s, \cdot)](x) \, \mathrm{d}s.
    \end{equation}
    Taking the $H$-norm, factoring out the polynomial weight $w_m(x)$, and integrating the temporal singularity $\int_t^T (s-t)^{-\gamma} \, \mathrm{d}s = \frac{1}{1-\gamma}(T-t)^{1-\gamma}$ directly yields the pointwise estimate \eqref{eq:exact_grad_bound}.

    \item \emph{Proof of local equi-Hölder continuity.} Let $x, y \in B_R \subset H$. The spatial variation of the terminal term is bounded via the interpolation estimates from Lemma \ref{lem:spatial_transition_reg}. Observing that $\sup_{z \in [x,y]} w_m(z) \le 1 + R^m$, we have:
    \begin{equation*}
        \| \nabla P_{t,T}[\Psi](x) - \nabla P_{t,T}[\Psi](y) \|_H \le C_R \|\Psi\|_{\mathcal{C}_m} (T-t)^{-\gamma(1+\beta)} \|x-y\|_H^\beta.
    \end{equation*}
    Similarly, the variation of the integral component is controlled by integrating the Hölder bound of the transitioned source over $[t, T]$:
    \begin{equation*}
        \int_t^T \| \nabla P_{t,s}[H_\varepsilon](x) - \nabla P_{t,s}[H_\varepsilon](y) \|_H \, \mathrm{d}s \le C_R \|H_\varepsilon\|_{\mathcal{B}_{m, T}} \|x-y\|_H^\beta \int_t^T (s-t)^{-\gamma(1+\beta)} \, \mathrm{d}s.
    \end{equation*}
    The condition $\gamma(1+\beta) < 1$ ensures the convergence of the temporal integral to $\frac{1}{1-\gamma(1+\beta)}(T-t)^{1-\gamma(1+\beta)}$. Summing the terminal and integral bounds and isolating the common factor $\|x-y\|_H^\beta$ establishes the Hölder bound \eqref{eq:exact_holder_grad}, completing the proof.
\end{enumerate}
\end{proof}

By exploiting the admissibility of the dynamic obstacle, we can now extract gradient estimates that are entirely independent of the penalization parameter.
\begin{corollary}[Uniform Gradient Estimates] 
\label{cor:uniform_gradients}
Let Assumptions \ref{ass:smoothing}, \ref{ass:coefficients}, and \ref{ass:problem_data} hold. Let the dynamic obstacle $\Phi$ be admissible in the sense of Definition \ref{def:admissible_obstacle}, and let $u_\varepsilon$ be the unique mild solution of equation \eqref{eq:hjb_penalized}. For any $t \in [0, T)$, the family of continuous Fréchet gradients $\{\nabla u_\varepsilon(t, \cdot)\}_{\varepsilon > 0}$ satisfies the following uniform bounds:
\begin{enumerate}[label={(\textit{\roman*})}, leftmargin=*]
    \item \emph{Uniform Boundedness.} There exists a continuous temporal envelope $K_1(t) > 0$, independent of $\varepsilon$, such that:
    \begin{equation}
    \label{eq:uniform_grad_bound}
        \sup_{\varepsilon > 0} \|\nabla u_\varepsilon(t, x)\|_H \le K_1(t) w_m(x), \quad \forall x \in H.
    \end{equation}
    
    \item \emph{Uniform Equi-Hölder Continuity.} For any closed ball $B_R \subset H$ of radius $R$, and any exponent $\beta \in (0, 1)$ satisfying $\gamma(1+\beta) < 1$, there exists a continuous temporal envelope $L_{R, T}(t) > 0$, independent of $\varepsilon$, such that:
    \begin{equation}
    \label{eq:equi_holder_grad}
        \sup_{\varepsilon > 0} \|\nabla u_\varepsilon(t, x) - \nabla u_\varepsilon(t, y)\|_H \le L_{R, T}(t) \|x - y\|_H^\beta, \quad \forall x, y \in \mathcal{B}_R.
    \end{equation}
\end{enumerate}
\end{corollary}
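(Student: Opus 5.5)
The argument combines Lemma \ref{lem:global_reg_approx} with the admissibility bound of Definition \ref{def:admissible_obstacle}. The gradient estimates \eqref{eq:exact_grad_bound} and \eqref{eq:exact_holder_grad} already hold for each fixed $\varepsilon>0$. Their only $\varepsilon$-dependence enters through $\|\Pi_\varepsilon\|_{\mathcal{B}_{m,T}}$, since $\|\Psi\|_{\mathcal{B}_m}$, $\|f\|_{\mathcal{B}_{m,T}}$, $C_1$, $C_3$, $\gamma$ and $\beta$ do not depend on the penalization. The plan is therefore to replace $\|\Pi_\varepsilon\|_{\mathcal{B}_{m,T}}$ by the constant $K$ from \eqref{eq:penalty_bound}, and then to check that the resulting envelopes are continuous in $t$.

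For (i), fix $t\in[0,T)$ and $x\in H$. Inserting $\sup_{\varepsilon>0}\|\Pi_\varepsilon\|_{\mathcal{B}_{m,T}}\le K$ into \eqref{eq:exact_grad_bound} gives, for every $\varepsilon>0$,
\begin{equation*}
\|\nabla u_\varepsilon(t,x)\|_H \le C_1\Big[\|\Psi\|_{\mathcal{B}_m}(T-t)^{-\gamma} + \big(\|f\|_{\mathcal{B}_{m,T}}+K\big)\frac{(T-t)^{1-\gamma}}{1-\gamma}\Big] w_m(x) =: K_1(t)\,w_m(x).
\end{equation*}
Taking the supremum over $\varepsilon$ yields \eqref{eq:uniform_grad_bound}. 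The function $K_1$ is continuous and positive on $[0,T)$. It blows up only as $t\uparrow T$, which is consistent with the statement being restricted to $t\in[0,T)$.

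For (ii), fix $R>0$ and $\beta\in(0,1)$ with $\gamma(1+\beta)<1$, and take $x,y\in B_R$. Substituting the same bound into \eqref{eq:exact_holder_grad} gives a Hölder estimate with envelope
\begin{equation*}
L_{R,T}(t):=C_3(1+R^m)\Big[\|\Psi\|_{\mathcal{B}_m}(T-t)^{-\gamma(1+\beta)} + \big(\|f\|_{\mathcal{B}_{m,T}}+K\big)\frac{(T-t)^{1-\gamma(1+\beta)}}{1-\gamma(1+\beta)}\Big].
\end{equation*}
This envelope is continuous on $[0,T)$ and independent of $\varepsilon$. Taking the supremum over $\varepsilon$ gives \eqref{eq:equi_holder_grad}.

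The main point to watch is a norm mismatch. Lemma \ref{lem:global_reg_approx} and Assumption \ref{ass:smoothing} are phrased with $\|\cdot\|_{\mathcal{C}_m}$, whereas the penalty bound controls the $\|\cdot\|_{\mathcal{B}_{m,T}}$ norm. These norms coincide by Definition \ref{def:weighted_topology}, since $\mathcal{C}_m$ carries the $\mathcal{B}_m$ norm. I would also confirm that the source term $f-\Pi_\varepsilon$ lies in $\mathcal{C}_m$. This follows from Corollary \ref{cor:continuity_penalized} together with the continuity of $\Phi$. Once these two checks are made, the proof is a direct substitution and no further difficulty is expected.
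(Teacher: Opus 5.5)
Your proposal is correct and follows the paper's proof exactly: the paper also inserts the admissibility constant $K$ from Definition \ref{def:admissible_obstacle} in place of $\|\Pi_\varepsilon\|_{\mathcal{B}_{m,T}}$ in the two estimates of Lemma \ref{lem:global_reg_approx}, and your explicit envelopes $K_1(t)$ and $L_{R,T}(t)$ simply spell out that substitution. Your extra check that $f-\Pi_\varepsilon$ is continuous is harmless but unnecessary, since Assumption \ref{ass:smoothing} is stated for all $\varphi\in\mathcal{B}_m(H)$.
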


\begin{proof}
The uniform bounds follow as a direct consequence of the penalty term's uniform boundedness. By Definition \ref{def:admissible_obstacle}, the admissibility of the dynamic obstacle $\Phi$ ensures that the family of penalty terms $\Pi_\varepsilon := \frac{1}{\varepsilon}(u_\varepsilon - \Phi)^+$ is uniformly bounded in the weighted parabolic topology. That is, there exists a constant $K > 0$, independent of $\varepsilon$, such that $\sup_{\varepsilon > 0} \|\Pi_\varepsilon\|_{\mathcal{B}_{m, T}} \le K$. Hence, the thesis follows directly from Lemma \ref{lem:global_reg_approx}.
\end{proof}

\subsection{Regularity of The Value Function}
\label{sec:weak_convergence}

We establish the convergence of the spatial gradients as $\varepsilon \downarrow 0$, deducing the $\mathcal{C}^{1,\beta}_{loc}(H)$ spatial regularity of the value function $V(t, x)$.

\begin{theorem}[Identification of the Fréchet Derivative and $\mathcal{C}^{1,\beta}_{loc}$ Regularity]
\label{thm:frechet_ident}
Let Assumptions \ref{ass:smoothing}, \ref{ass:coefficients} and \ref{ass:problem_data} hold. Let the dynamic obstacle $\Phi$ be admissible in the sense of Definition \ref{def:admissible_obstacle}. Let $u_\varepsilon$ be the unique mild solution of equation \eqref{eq:hjb_penalized} and $V$ be the pointwise limit of $u_\varepsilon$ as $\varepsilon \downarrow 0$. For any fixed $t \in [0, T)$ and $x\in H$, the following properties hold:
\begin{enumerate}[label={(\textit{\roman*})}, leftmargin=*]
    \item The family of penalized Fréchet gradients $\{\nabla u_{\varepsilon}(t, x)\}_{\varepsilon > 0}$ admits a subsequence weakly convergent to a limit $g(t, x) \in H$ as $\varepsilon\to0$.
    \item For any Hölder exponent $\beta \in (0, 1)$ satisfying $\gamma(1+\beta) < 1$, the weak limit $g(t, \cdot)$ is locally $\beta$-Hölder continuous on $H$.
    \item The mapping $V(t, \cdot)$ is Fréchet differentiable on $H$. Its spatial gradient coincides with the weak limit, yielding $\mathcal{C}^{1,\beta}_{loc}(H)$ spatial regularity.
\end{enumerate}
\end{theorem}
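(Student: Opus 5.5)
The plan is to use the uniform estimates of Corollary \ref{cor:uniform_gradients} to build a candidate gradient by compactness, and then identify it as the Fréchet derivative of $V(t,\cdot)$ through the pointwise convergence $u_\varepsilon \to V$. Fix $t\in[0,T)$ and a sequence $\varepsilon_n\downarrow 0$.

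\emph{Step 1: weak compactness.} By \eqref{eq:uniform_grad_bound}, $\{\nabla u_{\varepsilon_n}(t,x)\}_n$ is bounded in $H$ for each $x$. Bounded sets in a Hilbert space are weakly sequentially compact, so a weakly convergent subsequence exists at every point, which is (i). The subsequence must not depend on $x$, and I would secure this in two stages. First, take a countable dense set $D\subset H$ (available since $H$ is separable) and run a Cantor diagonal argument to get one subsequence converging weakly at every point of $D$. Second, extend to all of $H$ using the uniform equi-Hölder bound \eqref{eq:equi_holder_grad}. For $x\in B_R$, $d\in D\cap B_R$ and $h\in H$, the sequence $\langle \nabla u_{\varepsilon_n}(t,x),h\rangle$ is within $L_{R,T}(t)\|x-d\|^\beta\|h\|$ of a Cauchy sequence, hence is itself Cauchy. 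This defines the weak limit $g(t,x)$ everywhere along a single subsequence.

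\emph{Step 2: Hölder continuity of $g$.} The norm is weakly lower semicontinuous, so $\|g(t,x)-g(t,y)\|_H\le\liminf_n\|\nabla u_{\varepsilon_n}(t,x)-\nabla u_{\varepsilon_n}(t,y)\|_H\le L_{R,T}(t)\|x-y\|^\beta$ on $B_R$. This gives (ii), and in particular $g(t,\cdot)$ is continuous.

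\emph{Step 3: identification of the derivative.} For $x,h\in H$, the mean value theorem in integral form gives
\begin{equation*}
u_{\varepsilon_n}(t,x+h)-u_{\varepsilon_n}(t,x)=\int_0^1\langle\nabla u_{\varepsilon_n}(t,x+\theta h),h\rangle_H\,\mathrm{d}\theta .
\end{equation*}
As $n\to\infty$, the left side tends to $V(t,x+h)-V(t,x)$ by Proposition \ref{prop:existence_limit}. On the right, the integrand converges pointwise in $\theta$ to $\langle g(t,x+\theta h),h\rangle$ and is bounded by $\sup_n K_1(t)w_m(x+\theta h)\|h\|$, which is uniformly bounded. Dominated convergence therefore yields $V(t,x+h)-V(t,x)=\int_0^1\langle g(t,x+\theta h),h\rangle\,\mathrm{d}\theta$. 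Subtracting $\langle g(t,x),h\rangle$ and applying Step 2 with $R=\|x\|+1$ gives, for $\|h\|\le1$, the bound $|V(t,x+h)-V(t,x)-\langle g(t,x),h\rangle|\le L_{R,T}(t)\|h\|^{1+\beta}/(1+\beta)=o(\|h\|)$. Hence $V(t,\cdot)$ is Fréchet differentiable with $\nabla V(t,x)=g(t,x)$, and (iii) follows from (ii). Since the Fréchet derivative is unique, every weakly convergent subsequence has the same limit, so the whole family in fact converges weakly.

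The main obstacle is Step 1, the extraction of one subsequence that works simultaneously at all $x$. The diagonal-plus-equicontinuity argument addresses it. An alternative is to work pointwise, using Step 3 with an $x$-dependent subsequence along a dense set, and then rely on uniqueness of the derivative. The care needed is in checking that the Hölder constants $L_{R,T}(t)$ are truly uniform in $\varepsilon$ on balls. This is exactly where admissibility of $\Phi$ enters, through Corollary \ref{cor:uniform_gradients}.
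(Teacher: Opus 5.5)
Your proposal is correct and follows the paper's proof essentially step for step. The common route is Cantor diagonalization on a countable dense set, extended to all of $H$ via the uniform equi-H\"older bound of Corollary \ref{cor:uniform_gradients}; H\"older continuity of $g$ by weak lower semicontinuity of the norm; and identification of the derivative through the integral mean value formula plus dominated convergence. The only difference is at the end: you bound the Fr\'echet remainder directly by $L_{R,T}(t)\|h\|_H^{1+\beta}/(1+\beta)$, whereas the paper first obtains the G\^ateaux derivative and then cites the theorem that a continuous G\^ateaux derivative is Fr\'echet. Your version is self-contained and quantitative, and your remark that uniqueness of $\nabla V(t,x)$ forces weak convergence of the whole family is a valid addition the paper does not state.
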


\begin{proof} The proof proceeds in three distinct parts, addressing each claim of the Theorem. \begin{enumerate}[label={(\textit{\roman*})}, leftmargin=*]
    \item \emph{Existence of the weak gradient limit.}
    Let $t \in [0, T)$. By Corollary \ref{cor:uniform_gradients}, $\{\nabla u_{\varepsilon}(t, \cdot)\}_{\varepsilon > 0}$ is uniformly bounded on bounded domains. Through Cantor diagonalization on a countable dense subset $\mathcal{D} \subset H$, there exists a subsequence $\varepsilon_n \downarrow 0$ such that:
    \begin{equation}
    \label{eq:proof_weak_limit_dense}
        \nabla u_{\varepsilon_n}(t, y) \rightharpoonup g(t, y) \quad \text{weakly in } H, \quad \forall y \in \mathcal{D}.
    \end{equation}
    For arbitrary $x, v \in H$ and $y \in \mathcal{D}$, the Cauchy-Schwarz inequality and the equi-Hölder estimates of Corollary \ref{cor:uniform_gradients} yield:
    \begin{equation}
    \label{eq:proof_cauchy_split}
        |\langle \nabla u_{\varepsilon_n}(t, x) - \nabla u_{\varepsilon_m}(t, x), v \rangle_H| \le 2 L_{R, T}(t) \|x - y\|_H^\beta \|v\|_H + |\langle \nabla u_{\varepsilon_n}(t, y) - \nabla u_{\varepsilon_m}(t, y), v \rangle_H|.
    \end{equation}
    By the density of $\mathcal{D}$, the first term on the right-hand side can be made arbitrarily small. Since $\{\nabla u_{\varepsilon_n}(t, y)\}$ converges weakly, the sequence of evaluations is Cauchy in $\mathbb{R}$. The weak sequential completeness of $H$ ensures the existence of a global mapping $g(t, \cdot): H \to H$ such that $\nabla u_{\varepsilon_n}(t, x) \rightharpoonup g(t, x)$ for all $x \in H$.

    \item \emph{Local equi-Hölder continuity.}
    For $x, y$ in a closed ball $B_R \subset H$, the weak lower semi-continuity of the norm and the uniform estimates in Corollary \ref{cor:uniform_gradients} imply:
    \begin{equation*}
        \|g(t, x) - g(t, y)\|_H \le \liminf_{n \to \infty} \|\nabla u_{\varepsilon_n}(t, x) - \nabla u_{\varepsilon_n}(t, y)\|_H \le L_{R, T}(t) \|x - y\|_H^\beta,
    \end{equation*}
    provided $\gamma(1+\beta) < 1$. Thus, $g(t, \cdot)$ inherits the local $\beta$-Hölder continuity.

    \item \emph{Identification of the Fréchet derivative.}
    For $x, h \in H$ and $\rho > 0$, the $\mathcal{C}^1$-regularity of $u_{\varepsilon_n}$ and the fundamental theorem of calculus yield:
    \begin{equation}
    \label{eq:proof_ftc_eps}
        u_{\varepsilon_n}(t, x + \rho h) - u_{\varepsilon_n}(t, x) = \int_0^\rho \langle \nabla u_{\varepsilon_n}(t, x + \theta h), h \rangle_H \, \mathrm{d}\theta.
    \end{equation}
    As $n \to \infty$, $u_{\varepsilon_n} \to V$ pointwise (Theorem \ref{thm:convergence_exact}) and $\langle \nabla u_{\varepsilon_n}, h \rangle_H \to \langle g, h \rangle_H$. The uniform bound $\|\nabla u_{\varepsilon_n}(t,\cdot)\|_H \le K_1(t) w_m$ justifies the Dominated Convergence Theorem, giving:
    \begin{equation*}
        V(t, x + \rho h) - V(t, x) = \int_0^\rho \langle g(t, x + \theta h), h \rangle_H \, \mathrm{d}\theta.
    \end{equation*}
    Dividing by $\rho$ and taking the limit as $\rho \to 0$, the continuity of $g$ establishes $\langle g(t, x), h \rangle_H$ as the Gâteaux derivative of $V$. Since a continuous Gâteaux derivative implies Fréchet differentiability \cite[Theorem 1.9]{AmPr95}, we identify $\nabla V \equiv g$. This, combined with the Hölder regularity in step (\textit{ii}), confirms $V(t, \cdot) \in \mathcal{C}^{1,\beta}_{loc}(H)$.
\end{enumerate}
\end{proof}
\section{Sufficient Conditions for Admissibility of Obstacles}\label{sec:admissible_obstacles}
The admissibility condition introduced in Definition \ref{def:admissible_obstacle} is the key estimate that allows the penalization procedure to be used beyond mere convergence. It provides a uniform bound on the positive constraint violation $u_\varepsilon-\Phi$, rescaled by the penalization parameter. This bound is needed in three places: first, in the joint continuity for the limit of the penalized solutions $u_\varepsilon$; second, in the verification argument identifying the limiting mild solution with the optimal stopping value; third, in the regularity analysis, where it yields $\varepsilon$-independent bounds on the gradients of the penalized solutions.

Since admissibility is formulated in terms of the unknown approximating family $u_\varepsilon$, it is not immediately suitable as a structural assumption on the data. The aim of this section is therefore to derive verifiable sufficient conditions on the obstacle $\Phi$ which imply admissibility. The main idea is to compare $\Phi$ with a mild supersolution of the penalized dynamics. This produces a uniform upper bound on the penalty term and shows that the abstract admissibility condition is satisfied in concrete and natural classes of obstacles.

\subsection{Uniform Inequality}
\label{sec:lewy_stampacchia}

We establish here a parabolic uniform inequality, providing a bound on the penalty term that is independent of the penalization parameter.

We assume that the dynamic obstacle is a mild supersolution of a suitable family of equations.
\begin{assumption}[Dynamic Obstacle Structure] 
\label{ass:dynamic_obstacle}
We assume that there exist $\eta \in \mathcal{B}_m(H)$ and $\xi \in \mathcal{B}_m([0, T] \times H)$ such that, for any arbitrary penalization parameter $\varepsilon > 0$, the dynamic obstacle $\Phi \in Lip_m([0, T] \times H)$ satisfies the shifted mild integral inequality:
\begin{equation} 
\label{eq:shifted_mild_obstacle}
    \Phi(t, x) \ge e^{-\frac{T-t}{\varepsilon}} P_{t,T}[\eta](x) + \int_t^T e^{-\frac{s-t}{\varepsilon}} P_{t,s} \left[ \frac{1}{\varepsilon} \Phi(s, \cdot) + \xi(s, \cdot) \right](x) \, \mathrm{d}s,
\end{equation}
for all $(t, x) \in [0, T] \times H$.
\end{assumption}

Under this assumption, we can derive a uniform, parameter-independent bound for the penalty term.
\begin{theorem}[Uniform Estimate] 
\label{thm:uniform_lewy_stampacchia}
Let Assumptions \ref{ass:coefficients}, \ref{ass:problem_data}, and \ref{ass:dynamic_obstacle} hold. Let $u_\varepsilon$ denote the unique mild solution of equation \eqref{eq:hjb_penalized}. If the terminal boundary compatibility condition $\Psi(x) \le \eta(x)$ is satisfied for all $x \in H$, then the penalty term $\Pi_\varepsilon$ defined in \eqref{eq:yosida_penalty} satisfies the pointwise estimate:
\begin{equation} 
\label{eq:lewy_stampacchia_bound}
    0 \le \Pi_\varepsilon(t, x) \le \|(f - \xi)^+\|_{\mathcal{B}_{m, T}} K_m w_m(x), \quad \forall (t, x) \in [0, T] \times H,
\end{equation}
where $K_m > 0$ is the constant derived in Lemma \ref{lem:moment_estimates}. This bound is entirely independent of $\varepsilon$.
\end{theorem}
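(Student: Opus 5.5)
The plan is a comparison argument. I will exhibit an explicit mild supersolution of the shifted penalized equation lying above $u_\varepsilon$, and then use the order-preserving property of $\mathcal{M}_{\varepsilon,1/\varepsilon}$ (Lemma \ref{lem:order_preserving}) together with the convergence of its iterates (Theorem \ref{thm:existence_uniqueness}). Throughout, fix $\varepsilon>0$, set $L:=1/\varepsilon$ and $c:=\|(f-\xi)^+\|_{\mathcal{B}_{m,T}}$. The candidate supersolution is
\begin{equation*}
\hat S(t,x) := \Phi(t,x) + \varepsilon\, c\, K_m\, w_m(x),
\end{equation*}
which clearly belongs to $\mathcal{B}_m([0,T]\times H)$. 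The lower bound $\Pi_\varepsilon\ge 0$ is trivial, so only the upper bound needs work.

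\emph{Step 1: evaluate $\mathcal{M}_{\varepsilon,1/\varepsilon}(\hat S)$.} Since $\hat S-\Phi=\varepsilon c K_m w_m\ge 0$, the penalty term is exactly $(\hat S-\Phi)^+=\varepsilon cK_m w_m$. Hence
\begin{equation*}
L\hat S-\tfrac1\varepsilon(\hat S-\Phi)^+ = L\Phi.
\end{equation*}
The shifted integrand therefore reduces to $f+\frac1\varepsilon\Phi$, and
\begin{equation*}
\mathcal{M}_{\varepsilon,1/\varepsilon}(\hat S)(t,x)=e^{-\frac{T-t}{\varepsilon}}P_{t,T}[\Psi](x)+\int_t^T e^{-\frac{s-t}{\varepsilon}}P_{t,s}\Big[f(s,\cdot)+\tfrac1\varepsilon\Phi(s,\cdot)\Big](x)\,\mathrm{d}s .
\end{equation*}

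\emph{Step 2: compare with the obstacle.} I will use $\Psi\le\eta$ and the positivity of $P_{t,T}$ to replace $\Psi$ by $\eta$. I will also write $f=\xi+(f-\xi)$. Assumption \ref{ass:dynamic_obstacle} then bounds the resulting $\eta$- and $\xi$-terms by $\Phi(t,x)$, leaving
\begin{equation*}
\mathcal{M}_{\varepsilon,1/\varepsilon}(\hat S)(t,x)\le \Phi(t,x)+\int_t^T e^{-\frac{s-t}{\varepsilon}}P_{t,s}\big[(f-\xi)^+(s,\cdot)\big](x)\,\mathrm{d}s .
\end{equation*}
Next I use $(f-\xi)^+\le c\,w_m$ and the moment bound $P_{t,s}[w_m]\le K_m w_m$ from Lemma \ref{lem:moment_estimates}. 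Since $\int_t^T e^{-(s-t)/\varepsilon}\mathrm{d}s\le\varepsilon$, this gives $\mathcal{M}_{\varepsilon,1/\varepsilon}(\hat S)\le\hat S$. The point of the choice of $\hat S$ is that the $\varepsilon$ gained from the exponential weight exactly matches the $\varepsilon$ in the definition of $\hat S$.

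\emph{Step 3: iterate and conclude.} By Lemma \ref{lem:order_preserving}, induction gives $\mathcal{M}^n_{\varepsilon,1/\varepsilon}(\hat S)\le\hat S$ for all $n$. By the contraction argument in Theorem \ref{thm:existence_uniqueness}, $\mathcal{M}^n_{\varepsilon,1/\varepsilon}(\hat S)\to u_\varepsilon$ in $\mathcal{B}_m([0,T]\times H)$, hence pointwise. Therefore $u_\varepsilon\le\Phi+\varepsilon cK_m w_m$, i.e.
\begin{equation*}
\tfrac1\varepsilon(u_\varepsilon-\Phi)^+\le cK_m w_m,
\end{equation*}
which is \eqref{eq:lewy_stampacchia_bound}, independent of $\varepsilon$. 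At $t=T$ the bound is immediate, since there $u_\varepsilon=\Psi\le\Phi(T,\cdot)$.

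The main obstacle is guessing the right supersolution. The key observation is that adding a nonnegative perturbation $\varepsilon\theta$ to $\Phi$ makes the shifted nonlinearity collapse to $L\Phi$, so the only remaining requirement is $\theta\ge\frac1\varepsilon\int_t^T e^{-(s-t)/\varepsilon}P_{t,s}[(f-\xi)^+]\,\mathrm{d}s$. The time-independent choice $\theta=cK_m w_m$ satisfies this. Everything else follows from results already established in the paper.
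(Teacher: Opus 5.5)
Your argument is correct, but it is organized differently from the paper's.

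The paper does not construct a barrier. It sets $w_\varepsilon := u_\varepsilon-\Phi$ and subtracts the supersolution inequality of Assumption \ref{ass:dynamic_obstacle} directly from the shifted fixed-point identity $u_\varepsilon=\mathcal{M}_{\varepsilon,1/\varepsilon}(u_\varepsilon)$. It then removes the nonlinearity through the pointwise identity $\frac{1}{\varepsilon}w_\varepsilon-\Pi_\varepsilon=-\frac{1}{\varepsilon}w_\varepsilon^-\le 0$. After using $\Psi\le\eta$ and taking positive parts, this gives in one stroke
$\Pi_\varepsilon(t,x)\le \frac{1}{\varepsilon}\int_t^T e^{-\frac{s-t}{\varepsilon}}P_{t,s}[(f-\xi)^+(s,\cdot)](x)\,\mathrm{d}s$.
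That route uses only the fact that $u_\varepsilon$ solves the shifted equation. It needs neither Lemma \ref{lem:order_preserving} nor the convergence of the Picard iterates from Theorem \ref{thm:existence_uniqueness}.

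Your route instead checks $\mathcal{M}_{\varepsilon,1/\varepsilon}(\hat S)\le\hat S$ for the explicit barrier $\hat S=\Phi+\varepsilon cK_m w_m$. You then run the monotone-iteration comparison, which is the mirror image, from above, of Proposition \ref{prop:global_maximality}. This costs the order-preservation and iterate-convergence machinery; both are already available in the paper, and you invoke them correctly with $L=1/\varepsilon$. In exchange, you obtain a reusable comparison principle: $u_\varepsilon\le S$ for any mild supersolution $S$ of the shifted operator. You also make the mechanism transparent, since adding a nonnegative $\varepsilon\theta$ to $\Phi$ collapses the penalized nonlinearity to $\frac{1}{\varepsilon}\Phi$.

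The paper keeps the factor $1-e^{-(T-t)/\varepsilon}$ before discarding it. You would recover the same sharper bound with the time-dependent choice you already hint at, $\varepsilon\theta(t,x):=\int_t^T e^{-\frac{s-t}{\varepsilon}}P_{t,s}[(f-\xi)^+(s,\cdot)](x)\,\mathrm{d}s$.
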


\begin{proof}
Let $w_\varepsilon(t, x) := u_\varepsilon(t, x) - \Phi(t, x)$. Applying the shifted mild formulation (Theorem \ref{thm:equivalence_mild}) with $L := 1/\varepsilon$ and the dynamic sub-solution inequality for $\Phi$ yields:
\begin{equation*}
    w_\varepsilon(t, x) \le e^{-\frac{T-t}{\varepsilon}} P_{t,T} [\Psi - \eta](x) + \int_t^T e^{-\frac{s-t}{\varepsilon}} P_{t,s} \left[ f(s, \cdot) - \xi(s, \cdot) + \frac{1}{\varepsilon} w_\varepsilon(s, \cdot) - \Pi_\varepsilon(s, \cdot) \right](x) \, \mathrm{d}s.
\end{equation*}
Since $\frac{1}{\varepsilon} w_\varepsilon - \Pi_\varepsilon = -\frac{1}{\varepsilon} w_\varepsilon^- \le 0$, the positivity of $P_{t,s}$ allows discarding this non-positive component:
\begin{equation*}
    w_\varepsilon(t, x) \le e^{-\frac{T-t}{\varepsilon}} P_{t,T} [\Psi - \eta](x) + \int_t^T e^{-\frac{s-t}{\varepsilon}} P_{t,s} [ f(s, \cdot) - \xi(s, \cdot) ](x) \, \mathrm{d}s.
\end{equation*}
By the subadditivity and convexity of the positive part, Jensen's inequality ensures:
\begin{equation*}
    w_\varepsilon^+(t, x) \le e^{-\frac{T-t}{\varepsilon}} P_{t,T} [(\Psi - \eta)^+](x) + \int_t^T e^{-\frac{s-t}{\varepsilon}} P_{t,s} [(f - \xi)^+](x) \, \mathrm{d}s.
\end{equation*}
The terminal admissibility constraint $\Psi \le \eta$ implies $(\Psi - \eta)^+ \equiv 0$. Dividing by $\varepsilon$ to recover $\Pi_\varepsilon$, and invoking the polynomial growth stability $P_{t,s}[w_m] \le K_m w_m$ (Lemma \ref{lem:moment_estimates}), we deduce:
\begin{equation*}
    \Pi_\varepsilon(t, x) \le \int_t^T \frac{1}{\varepsilon} e^{-\frac{s-t}{\varepsilon}} P_{t,s} [(f - \xi)^+](x) \, \mathrm{d}s \le \|(f - \xi)^+\|_{\mathcal{B}_{m, T}} K_m w_m(x) \int_t^T \frac{1}{\varepsilon} e^{-\frac{s-t}{\varepsilon}} \, \mathrm{d}s.
\end{equation*}
Evaluating the integral and using $1 - e^{-\frac{T-t}{\varepsilon}} \le 1$ establishes the uniform bound:
\begin{equation*}
    \Pi_\varepsilon(t, x) \le \|(f - \xi)^+\|_{\mathcal{B}_{m, T}} K_m w_m(x).
\end{equation*}
This confirms that the penalization is bounded independently of $\varepsilon$, concluding the proof.
\end{proof}

\subsection{Regular Obstacles}
\label{sec:global_estimates}
In this subsection we show that sufficiently regular dynamic obstacles satisfy Assumption \ref{ass:dynamic_obstacle}. Consequently, they are admissible obstacles in the sense of Definition \ref{def:admissible_obstacle}.

\begin{assumption}[Regular obstacles]
\label{ass:regular_obstacles}
We assume that the dynamic obstacle $\Phi \in Lip_{m}([0,T]\times H)$ satisfies the following conditions:
\begin{enumerate}[label={(\textit{\roman*})}, leftmargin=*]
    \item \emph{Regularity.} The mapping $\Phi$ belongs to the class $\mathcal{C}^{1,2}([0, T] \times H; \mathbb{R})$.
    \item \emph{Adjoint Compatibility.} For any $(t, x) \in [0, T] \times H$, the spatial Fréchet gradient maps into the domain of the adjoint generator, namely $\nabla \Phi(t, x) \in \mathrm{Dom}(A^*)$. Furthermore, the mapping $(t, x) \mapsto A^* \nabla \Phi(t, x)$ is continuous from $[0, T] \times H$ into $H$.
    \item \emph{Polynomial Growth.} There exists a constant $K > 0$ such that:
    $$|\partial_t \Phi(t, x)| + \|\nabla \Phi(t, x)\|_H + \|\nabla^2 \Phi(t, x)\|_{\mathcal{L}(H)} + \|A^* \nabla \Phi(t, x)\|_H \le K w_m(x),$$
    for all $(t, x) \in [0, T] \times H$.
\end{enumerate}
\end{assumption}
Under Assumption \ref{ass:regular_obstacles}, the differential terms are well-defined, continuous, and belong to $\mathcal{B}_m([0, T] \times H)$. Consequently, by the mild Itô formula (see, e.g., \cite[Proposition 1.165]{FaGoSw17}), the obstacle satisfies the following identity for all $s \in [t, T]$:
\begin{equation}
\label{eq:mild_ito_identity}
\begin{aligned}
    P_{t,s}[\Phi(s, \cdot)](x) - \Phi(t, x) &= \int_t^s P_{t,r}\Big[ \partial_r \Phi(r, \cdot) + \langle \cdot, A^* \nabla \Phi(r, \cdot) \rangle_H \\
    & \quad + \langle b(r, \cdot), \nabla \Phi(r, \cdot) \rangle_H + \frac{1}{2} \Tr \big( \sigma(r, \cdot) \sigma^*(r, \cdot) \nabla^2 \Phi(r, \cdot) \big) \Big](x) \, \mathrm{d}r.
\end{aligned}
\end{equation}
This identity allow us to prove the following Lemma.
\begin{lemma}[Mild Variation of Constants]
\label{lem:mild_variation_constants}
Let Assumption \ref{ass:regular_obstacles} hold. For any penalization parameter $\varepsilon > 0$ and any initial state $(t, x) \in [0, T] \times H$, the continuous obstacle $\Phi$ satisfies the following identity:
\begin{equation}
\label{eq:mild_variation_constants}
\begin{aligned}
    \Phi(t, x) &= e^{-\frac{T-t}{\varepsilon}} P_{t,T}[\Phi(T, \cdot)](x) + \int_t^T e^{-\frac{s-t}{\varepsilon}} P_{t,s} \Big[ -\partial_s \Phi(s, \cdot) - \langle \cdot, A^* \nabla \Phi(s, \cdot) \rangle_H \\
    & \quad- \langle b(s, \cdot), \nabla \Phi(s, \cdot) \rangle_H - \frac{1}{2} \Tr \big( \sigma(s, \cdot) \sigma^*(s, \cdot) \nabla^2 \Phi(s, \cdot) \big) + \frac{1}{\varepsilon} \Phi(s, \cdot) \Big](x) \, \mathrm{d}s.
\end{aligned}
\end{equation}
\end{lemma}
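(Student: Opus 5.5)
The plan is to derive \eqref{eq:mild_variation_constants} from the mild Itô identity \eqref{eq:mild_ito_identity} by the same exponential-weighting and Fubini manipulation used in the proofs of Theorem \ref{thm:equivalence_mild} and Proposition \ref{prop:global_maximality}. Denote by $\mathcal{G}\Phi(r,\cdot)$ the full integrand in \eqref{eq:mild_ito_identity}, that is, $\partial_r\Phi + \langle \cdot, A^*\nabla\Phi\rangle_H + \langle b,\nabla\Phi\rangle_H + \frac12\Tr(\sigma\sigma^*\nabla^2\Phi)$. By Assumption \ref{ass:regular_obstacles}(iii) and the linear growth of $b,\sigma$, each term of $\mathcal{G}\Phi$ is bounded by a constant times $w_{m+2}$ (or $w_{m+1}$). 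After enlarging the exponent if necessary, $\mathcal{G}\Phi(r,\cdot)$ therefore lies in a weighted space $\mathcal{B}_{m'}$ on which Lemma \ref{lem:moment_estimates} gives $P_{t,r}[w_{m'}]\le K_{m'}w_{m'}$. All integrals below are then absolutely convergent, and Fubini's theorem is justified.

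\emph{Step 1.} Rewrite \eqref{eq:mild_ito_identity} as
\[
\Phi(t,x)=P_{t,s}[\Phi(s,\cdot)](x)-\int_t^s P_{t,r}[\mathcal{G}\Phi(r,\cdot)](x)\,\mathrm{d}r,
\]
valid for every $s\in[t,T]$. Multiply this identity by $\frac1\varepsilon e^{-(s-t)/\varepsilon}$ and integrate over $s\in[t,T]$. Separately, evaluate it at $s=T$ and multiply by $e^{-(T-t)/\varepsilon}$. Adding the two results, the left-hand side becomes $\Phi(t,x)\big[(1-e^{-(T-t)/\varepsilon})+e^{-(T-t)/\varepsilon}\big]=\Phi(t,x)$.

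\emph{Step 2.} On the right-hand side, the term $\int_t^T \frac1\varepsilon e^{-(s-t)/\varepsilon}P_{t,s}[\Phi(s,\cdot)](x)\,\mathrm{d}s$ is exactly the $\frac1\varepsilon\Phi$ contribution in \eqref{eq:mild_variation_constants}. The term $e^{-(T-t)/\varepsilon}P_{t,T}[\Phi(T,\cdot)](x)$ is the terminal term. For the double integral, exchange the order of integration and use $\int_r^T\frac1\varepsilon e^{-(s-t)/\varepsilon}\,\mathrm{d}s=e^{-(r-t)/\varepsilon}-e^{-(T-t)/\varepsilon}$. This gives
\[
-\int_t^T \big(e^{-(r-t)/\varepsilon}-e^{-(T-t)/\varepsilon}\big)P_{t,r}[\mathcal{G}\Phi(r,\cdot)](x)\,\mathrm{d}r .
\]
The $e^{-(T-t)/\varepsilon}$ part of this expression cancels exactly against the term $-e^{-(T-t)/\varepsilon}\int_t^TP_{t,r}[\mathcal{G}\Phi]\,\mathrm{d}r$ coming from the evaluation at $s=T$. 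What remains is $-\int_t^Te^{-(r-t)/\varepsilon}P_{t,r}[\mathcal{G}\Phi(r,\cdot)](x)\,\mathrm{d}r$, which is precisely the remaining part of \eqref{eq:mild_variation_constants}, by linearity of $P_{t,r}$.

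The only real obstacle is integrability. The term $\langle x,A^*\nabla\Phi\rangle_H$ grows like $w_{m+1}$ rather than $w_m$, and the trace term requires $\|\sigma\|_{\mathcal{L}_2}^2\|\nabla^2\Phi\|_{\mathcal{L}(H)}$, which grows like $w_{m+2}$. One therefore needs the moment bound of Lemma \ref{lem:moment_estimates} with the higher exponent. It must also be checked that the trace is well defined, which holds because $\sigma\sigma^*$ is trace class. Measurability in $r$ of $P_{t,r}[\mathcal{G}\Phi(r,\cdot)](x)$ follows from the continuity assumed in Assumption \ref{ass:regular_obstacles}(ii). With these points settled, the computation above is purely algebraic.
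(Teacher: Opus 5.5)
Your proposal is correct and is essentially the paper's argument. The paper applies the product rule to the absolutely continuous map $s\mapsto e^{-(s-t)/\varepsilon}P_{t,s}[\Phi(s,\cdot)](x)$ furnished by \eqref{eq:mild_ito_identity} and integrates over $[t,T]$; you carry out the same integration by parts via Fubini, as in the proofs of Theorem~\ref{thm:equivalence_mild} and Proposition~\ref{prop:global_maximality}. Your integrability remark is also right and more careful than the paper, which asserts that the generator terms lie in $\mathcal{B}_m$ when in general they are only in $\mathcal{B}_{m+2}$, so Lemma~\ref{lem:moment_estimates} is indeed needed with a higher exponent.
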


\begin{proof}
Fix an initial state $(t, x) \in [0, T) \times H$ and an arbitrary penalization parameter $\varepsilon > 0$. By Assumption \ref{ass:regular_obstacles}, the mapping $\Phi$ and all its spatial and temporal differential components exhibit at most polynomial growth. The mild Itô identity \eqref{eq:mild_ito_identity} guarantees that the evaluation mapping $s \mapsto P_{t,s}[\Phi(s, \cdot)](x)$ is absolutely continuous. Applying the product rule for absolutely continuous functions to the discounted mapping $s \mapsto e^{-\frac{s-t}{\varepsilon}} P_{t,s}[\Phi(s, \cdot)](x)$ over the interval $[t, T]$ and integrating we get:
\begin{equation*}
\begin{aligned}
    e^{-\frac{T-t}{\varepsilon}} P_{t,T}[\Phi(T, \cdot)](x) - \Phi(t, x)
    &= \int_t^T e^{-\frac{s-t}{\varepsilon}} P_{t,s} \Big[ -\frac{1}{\varepsilon} \Phi(s, \cdot) + \partial_s \Phi(s, \cdot) + \langle \cdot, A^* \nabla \Phi(s, \cdot) \rangle_H \\
    & \quad + \langle b(s, \cdot), \nabla \Phi(s, \cdot) \rangle_H + \frac{1}{2} \Tr \big( \sigma(s, \cdot) \sigma^*(s, \cdot) \nabla^2 \Phi(s, \cdot) \big) \Big](x) \, \mathrm{d}s.
\end{aligned}
\end{equation*}
A direct algebraic rearrangement recovers \eqref{eq:mild_variation_constants}, concluding the proof.
\end{proof}

\subsection{Square Norm}
\label{sec:square_norm}
\noindent We show that the square norm is an admissible obstacle in the sense of Definition \ref{def:admissible_obstacle}. Since the gradient of the square norm does not generally belong in the domain of the adjoint operator, is not possible to apply directly the mild Itô formula. Hence, we regularize the stochastic flow through the Yosida approximants, we evaluate the explicit Itô differential terms, extract uniform bounds, and recover the mild supersolution property .

\begin{theorem}[Quadratic Mild Supersolutions]
\label{thm:quadratic_supersolutions}
Let Assumption \ref{ass:coefficients} hold. Assume, moreover, that the unbounded linear operator $A$ satisfies $\langle Ax, x \rangle_H \le \omega \|x\|_H^2$ for some $\omega \in \mathbb{R}$ and all $x \in \mathrm{Dom}(A)$. Let $\phi(x) := \|x\|_H^2$. Then, there exists a constant $C > 0$, such that for any penalization parameter $\varepsilon > 0$ and initial time $t \in [0, T]$, $\phi$ satisfies:
\begin{equation} 
\label{eq:quadratic_supersolution}
    \phi(x) \ge e^{-\frac{T-t}{\varepsilon}} P_{t,T} [\phi](x) + \int_t^T e^{-\frac{s-t}{\varepsilon}} P_{t,s} \left[ \frac{1}{\varepsilon} \phi - C(1+\phi) \right](x) \, \mathrm{d}s.
\end{equation}
\end{theorem}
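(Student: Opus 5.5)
The plan is to show that $\phi(x)=\|x\|_H^2$ is a mild supersolution with source $-C(1+\phi)$. Equivalently, writing $\mathcal{L}$ formally, we want to establish the dissipation inequality
\[
P_{t,s}[\phi](x) \le \phi(x) + C\int_t^s P_{t,r}[1+\phi](x)\,\mathrm{d}r
\]
for all $t\le s\le T$, and then discount it exactly as in Lemma \ref{lem:mild_variation_constants}.

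\emph{First step: Yosida regularization.} Let $A_n := nA(n-A)^{-1}$ be the Yosida approximants and let $X_n(\cdot)$ be the strong solution of the equation with $A$ replaced by the bounded operator $A_n$. For $n$ large, the dissipativity hypothesis $\langle Ax,x\rangle_H\le \omega\|x\|_H^2$ transfers, up to a uniform shift, to $\langle A_n x, x\rangle_H \le \omega_n \|x\|_H^2$ with $\omega_n \to \omega$ and $\sup_n\omega_n<\infty$. This follows from the standard identity $A_n = A J_n$, $J_n=n(n-A)^{-1}$, together with $\|J_n\|\le 1+O(1/n)$ for the quasi-contractive case (after rescaling by $e^{-\omega t}$). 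Since $A_n$ is bounded, the classical Itô formula applies to $\|X_n(s)\|_H^2$ and gives
\[
\mathrm{d}\|X_n\|_H^2 = \big(2\langle A_nX_n + b(s,X_n), X_n\rangle_H + \|\sigma(s,X_n)\|_{\mathcal{L}_2(\Xi,H)}^2\big)\mathrm{d}s + \mathrm{d}M_n .
\]
Using the linear growth in Assumption \ref{ass:coefficients} and Young's inequality, the drift is bounded by $C(1+\|X_n\|_H^2)$, where $C$ depends only on $\sup_n\omega_n$ and $K$. The local martingale $M_n$ is a true martingale by the moment bounds of Lemma \ref{lem:moment_estimates}, which hold uniformly in $n$. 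Taking expectations on $[r,s]$ with $X_n(r)=y$, and using the Markov property along the flow started at $(t,x)$, we obtain
\[
\E\|X_n(s)\|_H^2 \le \E\|X_n(r)\|_H^2 + C\int_r^s \E\big[1+\|X_n(q)\|_H^2\big]\mathrm{d}q .
\]

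\emph{Second step: passage to the limit.} It is standard (e.g. \cite{DPZa96}) that $X_n(s;t,x)\to X(s;t,x)$ in $L^2(\Omega;H)$, uniformly in $s\in[t,T]$. Together with the uniform second moments, this lets us pass to the limit $n\to\infty$ and obtain the same inequality for $X$. In semigroup form it reads $P_{t,s}[\phi](x)\le P_{t,r}[\phi](x)+C\int_r^sP_{t,q}[1+\phi](x)\,\mathrm{d}q$. Hence $h(s):=P_{t,s}[\phi](x)-C\int_t^sP_{t,q}[1+\phi](x)\,\mathrm{d}q$ is nonincreasing on $[t,T]$. It is also continuous, by mean-square continuity of the flow.

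\emph{Third step: discounting.} Since $h$ is nonincreasing with $h(t)=\phi(x)$, we may integrate by parts in the Stieltjes sense against $e^{-(s-t)/\varepsilon}$. Concretely,
\[
e^{-\frac{T-t}{\varepsilon}}h(T) - h(t) = \int_t^T e^{-\frac{s-t}{\varepsilon}}\,\mathrm{d}h(s) - \frac1\varepsilon\int_t^T e^{-\frac{s-t}{\varepsilon}} h(s)\,\mathrm{d}s,
\]
and the first integral on the right is $\le 0$. Substituting the definition of $h$, the double integral $\frac1\varepsilon\int_t^T e^{-(s-t)/\varepsilon}\int_t^s P_{t,q}[1+\phi]\,\mathrm{d}q\,\mathrm{d}s$ can be exchanged by Fubini, exactly as in the proof of Proposition \ref{prop:global_maximality}. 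This exchange cancels the terminal term $e^{-(T-t)/\varepsilon}C\int_t^T P_{t,q}[1+\phi]\,\mathrm{d}q$ coming from $h(T)$. After the cancellation we obtain precisely \eqref{eq:quadratic_supersolution}, with $C$ independent of $\varepsilon$ and $t$.

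\emph{Main obstacle.} The delicate point is the first step: obtaining a bound $\langle A_nx,x\rangle_H\le \omega'\|x\|_H^2$ that is uniform in $n$, and justifying the limit. My first attempt would be to replace $A$ by $A-\omega$, so that the semigroup is contractive. Then $A_n$ is dissipative, since $\langle A_nx,x\rangle = n\langle J_nx,x\rangle - n\|x\|^2 \le 0$ because $\|J_n\|\le1$. This absorbs $\omega$ into the constant $C$.
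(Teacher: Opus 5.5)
Your proposal is correct and follows the paper's route: Yosida approximants, It\^o's formula for the squared norm of the regularized flow, a drift bound $C(1+\|y\|_H^2)$ uniform in the approximation index, and $L^2$-convergence of the regularized flows to $X$. The only difference is in the order of steps: the paper applies It\^o's formula directly to the discounted process $e^{-(s-t)/\varepsilon}\phi(X_\lambda(s))$, whereas you first prove the $\varepsilon$-free monotonicity of $h$ and then discount by a deterministic Stieltjes integration by parts and Fubini. Your shift $A\mapsto A-\omega$ also justifies the uniform bound on $\langle A_\lambda y,y\rangle_H$ more carefully than the paper, which states it as $\le\omega\|y\|_H^2$ although in general it only holds with $\omega_\lambda=\lambda\omega/(\lambda-\omega)$.
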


\begin{proof}
The spatial Fréchet gradient of the quadratic function is $\nabla \phi(x) = 2x$. Since arbitrary states $x \in H$ do not necessarily belong to $\mathrm{Dom}(A^*)$, we introduce the classical Yosida approximants $A_\lambda := \lambda A(\lambda I - A)^{-1}$ for $\lambda > 0$ (see, e.g., \cite[Chapter 1, Section 1.3]{Pa83}). Since $A_\lambda$ is a bounded linear operator on $H$, we can apply the mild Itô formula to the discounted process $e^{-\frac{s-t}{\varepsilon}} \phi(X_\lambda(s))$, obtaining:
\begin{equation}\label{provaprova}
\begin{aligned}
    \E\left[ e^{-\frac{T-t}{\varepsilon}} \phi(X_\lambda(T)) \right] - \phi(x) &= \E \Bigg[ \int_t^T e^{-\frac{s-t}{\varepsilon}} \Big( -\frac{1}{\varepsilon} \phi(X_\lambda(s)) + 2\langle A_\lambda X_\lambda(s), X_\lambda(s) \rangle_H \\
    & \quad + 2\langle b(s, X_\lambda(s)), X_\lambda(s) \rangle_H + \operatorname{Tr}\big(\sigma(s, X_\lambda(s))\sigma^*(s, X_\lambda(s))\big) \Big) \mathrm{d}s \Bigg].
\end{aligned}
\end{equation}
By the properties of Yosida approximants, the dissipativity of $A$ is preserved, ensuring $2\langle A_\lambda y, y \rangle_H \le 2\omega \|y\|_H^2$ for all $y \in H$ uniformly in $\lambda$. Furthermore, the linear growth bounds of the coefficients (Assumption \ref{ass:coefficients}) imply, via the Cauchy-Schwarz and Young inequalities:
\begin{equation*}
    2\langle b(s, y), y \rangle_H \le 2K^2 + (2K^2 + 1)\|y\|_H^2, \quad \operatorname{Tr}(\sigma(s, y)\sigma^*(s, y)) \le 2K^2(1 + \|y\|_H^2).
\end{equation*}
Hence, for sufficiently large $C>0$, we get:
\begin{equation*}
    2\langle A_\lambda y, y \rangle_H + 2\langle b(s, y), y \rangle_H + \operatorname{Tr}\big(\sigma(s, y)\sigma^*(s, y)\big) \le C(1 + \|y\|_H^2).
\end{equation*}
Substituting this bound into \ref{provaprova} gives:
\begin{equation}\label{provaprova1}
    \phi(x) \ge \E\left[ e^{-\frac{T-t}{\varepsilon}} \phi(X_\lambda(T)) \right] - \E \left[ \int_t^T e^{-\frac{s-t}{\varepsilon}} \Big( -\frac{1}{\varepsilon} \phi(X_\lambda(s)) + C(1 + \phi(X_\lambda(s))) \Big) \mathrm{d}s \right].
\end{equation}
As $\lambda \to \infty$ we have that $\E[\|X_\lambda(s) - X(s)\|_H^2] \to 0$ uniformly on $[t, T]$ (see, e.g., \cite[Proposition 3.2]{GaLe11}). By the continuity of the norm and the polynomial growth bound, applying the Dominated Convergence Theorem to the temporal integral recovers the exact mild supersolution inequality \eqref{eq:quadratic_supersolution} for the true flow $X(s)$, concluding the proof.
\end{proof}

\subsection{Cylindrical Lipschitz Concave Functions}
\label{sec:cylindrical_obstacles}

\noindent We show that cylindrical obstacles defined by Lipschitz concave functions are admissible. Standard approximation arguments justify the mild Itô formula, while concavity neutralizes the second-order trace term, yielding the required uniform bound.

\begin{theorem}[Estimate for Projected Payoffs] 
\label{thm:lewy_stampacchia_projected}
Let Assumptions \ref{ass:coefficients} and \ref{ass:problem_data} hold. Define the dynamic obstacle $\Phi(x) := g(\langle h, x \rangle_H)$, where $g: \R \to \R$ is concave and $L$-Lipschitz, and $h \in \mathrm{Dom}(A^*)$. Let $u_\varepsilon$ be the unique mild solution to \eqref{eq:hjb_penalized}. Then, the penalty term $\Pi_\varepsilon$ defined in \eqref{eq:yosida_penalty} satisfies:
\begin{equation}
\label{eq:ls_projected_bound}
    0 \le \Pi_\varepsilon(t, x) \le C w_m(x), \quad \forall (t,x) \in [0,T] \times H,
\end{equation}
for a constant $C > 0$ independent of $\varepsilon$. Consequently, $\Phi$ is an admissible obstacle in the sense of Definition \ref{def:admissible_obstacle}.
\end{theorem}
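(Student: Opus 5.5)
The plan is to verify Assumption \ref{ass:dynamic_obstacle} for $\Phi(x)=g(\langle h,x\rangle_H)$ with $\eta:=\Phi$ and a suitable $\xi\in\mathcal{B}_m([0,T]\times H)$ bounded below by a constant multiple of $-w_m$. Theorem \ref{thm:uniform_lewy_stampacchia} then yields \eqref{eq:ls_projected_bound}. The terminal compatibility needed there, $\Psi\le\eta=\Phi(T,\cdot)$, is exactly Assumption \ref{ass:problem_data}(ii). Since $\Phi$ is Lipschitz and has linear growth, it lies in $Lip_m$ for $m\ge1$.

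\textbf{Step 1 (smooth case).} First assume $g\in\mathcal{C}^2$, concave, with $|g'|\le L$. Then $\nabla\Phi(x)=g'(\langle h,x\rangle)h\in\mathrm{Dom}(A^*)$ and $A^*\nabla\Phi(x)=g'(\langle h,x\rangle)A^*h$. Moreover, $\nabla^2\Phi(x)=g''(\langle h,x\rangle)\,h\otimes h$. These expressions give
\[
\langle x,A^*\nabla\Phi\rangle+\langle b,\nabla\Phi\rangle = g'(\langle h,x\rangle)\big(\langle x,A^*h\rangle+\langle b(s,x),h\rangle\big),
\]
which is bounded in absolute value by $L(\|A^*h\|+K\|h\|)(1+\|x\|)\le C w_m(x)$. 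The trace term equals $\tfrac12 g''(\cdot)\|\sigma^*h\|^2\le0$ by concavity.

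Apply the mild Itô formula in the form of Lemma \ref{lem:mild_variation_constants}. I would not require the bound on $\nabla^2\Phi$ from Assumption \ref{ass:regular_obstacles}(iii) to be uniform: the trace term need only be integrable, and it can be handled by truncation or monotone convergence since it has a sign. This gives the identity with source
\[
\zeta:=-\mathcal{L}\Phi=-g'(\cdot)(\dots)-\tfrac12 g''\|\sigma^*h\|^2\ge -C w_m.
\]
Hence the inequality \eqref{eq:shifted_mild_obstacle} holds with $\xi:=-C w_m$, where $C$ depends only on $L,\|h\|,\|A^*h\|,K$ and not on the $\mathcal{C}^2$ norm of $g$.

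\textbf{Step 2 (approximation).} For general concave Lipschitz $g$, mollify: $g_n:=g*\rho_n$ is smooth, concave, $L$-Lipschitz, and $g_n\to g$ uniformly with $|g_n-g|\le L/n$. Step 1 gives, for each $n$,
\[
g_n(\langle h,x\rangle)\ge e^{-\frac{T-t}{\varepsilon}}P_{t,T}[\Phi_n](x)+\int_t^T e^{-\frac{s-t}{\varepsilon}}P_{t,s}\big[\tfrac1\varepsilon\Phi_n - C w_m\big](x)\,ds,
\]
with $C$ independent of $n$. Uniform convergence allows passing to the limit directly, since $P_{t,s}$ preserves constants. This yields \eqref{eq:shifted_mild_obstacle} for $\Phi$ with $\eta=\Phi$ and $\xi=-Cw_m$.

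\textbf{Step 3 (conclusion).} Theorem \ref{thm:uniform_lewy_stampacchia} gives
\[
\Pi_\varepsilon\le\|(f+Cw_m)^+\|_{\mathcal{B}_{m,T}}K_m w_m \le (\|f\|_{\mathcal{B}_{m,T}}+C)K_m w_m,
\]
and this bound is independent of $\varepsilon$. Admissibility in the sense of Definition \ref{def:admissible_obstacle} follows.

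\textbf{Main obstacle.} The delicate point is the justification of the mild Itô formula in Step 1. The term $\langle x,A^*\nabla\Phi\rangle$ requires $h\in\mathrm{Dom}(A^*)$, and the second-derivative bound need not hold uniformly. I would treat this as in the proof of Theorem \ref{thm:quadratic_supersolutions}. Since $\langle h,X(s)\rangle$ is a real semimartingale,
\[
d\langle h,X\rangle=\big(\langle A^*h,X\rangle+\langle h,b\rangle\big)ds+\langle \sigma^*h,dW\rangle,
\]
using $h\in\mathrm{Dom}(A^*)$, the finite-dimensional Itô formula applies directly to $e^{-(s-t)/\varepsilon}g_n(\langle h,X(s)\rangle)$. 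Taking expectations completes the argument, with the stochastic integral a true martingale because $g_n'$ is bounded.
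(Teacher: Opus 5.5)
Your argument is correct, but it takes the regularization limit at a different point than the paper does.

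The paper mollifies $g$ and treats each $\Phi_n=g_n(\langle h,\cdot\rangle_H)$ as a regular obstacle. It applies Lemma \ref{lem:mild_variation_constants} and Theorem \ref{thm:uniform_lewy_stampacchia} to the penalized solutions $u_{\varepsilon,n}$ built with obstacle $\Phi_n$. It then lets $n\to\infty$ in the nonlinear penalized equations via the stability Lemma \ref{lem:stability_mild} (harmless for fixed $\varepsilon$), and transfers the bound to $\Pi_\varepsilon$ pointwise.

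You instead pass to the limit in the supersolution inequality \eqref{eq:shifted_mild_obstacle}. That inequality is linear in the obstacle, so $|\Phi_n-\Phi|\le L/n$ and $P_{t,s}[1]=1$ are enough. You then invoke Theorem \ref{thm:uniform_lewy_stampacchia} once, for the true obstacle, with $\eta=\Phi$ and $\xi=-Cw_m$.

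Your ordering gains two things:
\begin{enumerate}
\item The stability lemma is never needed.
\item The terminal compatibility required by Theorem \ref{thm:uniform_lewy_stampacchia} is exactly Assumption \ref{ass:problem_data}(ii). The paper's route implicitly needs $\Psi\le\Phi_n$ for each regularized obstacle. That is not automatic, because symmetric mollifications of a concave function lie below it; one would have to shift $g_n$ up by $L/n$.
\end{enumerate}

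Your constant $(\|f\|_{\mathcal{B}_{m,T}}+C)K_m$ is also exactly the one Theorem \ref{thm:uniform_lewy_stampacchia} delivers.

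Finally, your justification of the It\^o step goes through the real semimartingale $\langle h,X(s)\rangle_H$, which is available because $h\in\mathrm{Dom}(A^*)$. This is more elementary than the paper's appeal to the mild It\^o formula under Assumption \ref{ass:regular_obstacles}. Since the mollified $g_n$ have bounded $g_n''$, the truncation remark in your Step 1 is not actually needed.
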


\begin{proof}
Let $g_n \in \mathcal{C}^\infty(\R)$ be a standard sequence of smooth, concave regularizations converging uniformly to $g$. We can choose these approximants such that $g_n'' \le 0$, $\sup_{n \in \mathbb{N}} \|g_n'\|_\infty \le L < \infty$, and $\|g_n''\|_\infty < \infty$ for all $n \in \mathbb{N}$. Define the regularized cylindrical obstacles $\Phi_n(x) := g_n(\langle h, x \rangle_H)$. Since $h \in \mathrm{Dom}(A^*)$, the Fréchet gradient $\nabla \Phi_n(x) = g_n'(\langle h, x \rangle_H) h$ maps into $\mathrm{Dom}(A^*)$ for all $x \in H$. Thus, $\Phi_n$ satisfies Assumption \ref{ass:regular_obstacles}. Moreover, we can evaluate explicitly:
\begin{equation*}
\begin{aligned}
    &\langle x, A^* \nabla \Phi_n(x) \rangle_H + \langle b(t, x), \nabla \Phi_n(x) \rangle_H + \frac{1}{2} \Tr \big( \sigma(t, x) \sigma^*(t, x) \nabla^2 \Phi_n(x) \big) \\
    &= g_n'(\langle h, x \rangle_H) \big( \langle x, A^* h \rangle_H + \langle b(t, x), h \rangle_H \big) + \frac{1}{2} g_n''(\langle h, x \rangle_H) \|\sigma^*(t, x)h\|_H^2.
\end{aligned}
\end{equation*}
Using the bounds $g_n'' \le 0$ and $\sup_{n \in \mathbb{N}}\|g_n'\|_\infty \le L$ together with the linear growth of $b$, there exists a constant $C_1 > 0$, independent of $n$, such that:
\begin{equation*}
    \langle x, A^* \nabla \Phi_n(x) \rangle_H + \langle b(t, x), \nabla \Phi_n(x) \rangle_H + \frac{1}{2} \Tr \big( \sigma(t, x) \sigma^*(t, x) \nabla^2 \Phi_n(x) \big) \le C_1 (1 + \|x\|_H).
\end{equation*}
By Lemma \ref{lem:mild_variation_constants} and Theorem \ref{thm:uniform_lewy_stampacchia}, the penalty term $\Pi_{\varepsilon, n} := \frac{1}{\varepsilon}(u_{\varepsilon, n} - \Phi_n)^+$ satisfies:
\begin{equation*}
    0 \le \Pi_{\varepsilon, n}(t, x) \le \Big( C_1 (1 + \|x\|_H) + \|f\|_{\mathcal{B}_{m, T}} w_m(x) \Big)^+.
\end{equation*}
Since $1 + \|x\|_H \le \sqrt{2}w_m(x)$, we find a constant $C > 0$, independent of both $\varepsilon$ and $n$, such that $\Pi_{\varepsilon, n}(t, x) \le C w_m(x)$. As $n \to \infty$, the stability of mild solutions (Lemma \ref{lem:stability_mild}) implies $u_{\varepsilon, n} \to u_\varepsilon$ in $\mathcal{B}_m([0, T] \times H)$. The uniform convergence $\Phi_n \to \Phi$ ensures $\Pi_{\varepsilon, n} \to \Pi_\varepsilon$ pointwise, preserving the bound:
\begin{equation*}
    \Pi_\varepsilon(t, x) \le C w_m(x).
\end{equation*}
This estimate confirms admissibility, concluding the proof.
\end{proof}
\section{Application: The Stochastic Heat Equation}
\label{sec:example_heat}

\noindent This section validates the abstract framework by applying the continuous-differentiability result to a linear stochastic heat equation. We provide an explicit construction of a trace-class covariance operator that satisfies the null controllability condition, ensuring that the transition semigroup exhibits the regularizing properties required for the spatial differentiability of the value function.

Consider the spatial domain $\mathcal{O} := (0, \pi)$ and the state space $H := L^2(0, \pi)$. The infinitesimal generator $A := \Delta = \partial_{xx}$, with domain $\mathrm{Dom}(A) := H^2(0, \pi) \cap H^1_0(0, \pi)$, generates an analytic contraction semigroup $\{e^{tA}\}_{t \ge 0}$ (see, e.g., \cite[Chapter 1, Theorem 4.3]{Pa83}). The spectrum of $A$ consists of eigenvalues $-\lambda_k = -k^2$ corresponding to the orthonormal basis $e_k(x) = \sqrt{2/\pi} \sin(kx)$ for $k \in \mathbb{N}$. We analyze the following stochastic heat equation with additive noise:
\begin{equation}
\label{eq:heat_dynamics}
\begin{cases}
\mathrm{d}X(s) = AX(s) \mathrm{d}s + B \mathrm{d}W(s), \quad s \in (t, T], \\
X(t) = x \in H,
\end{cases}
\end{equation}
where $W$ is a cylindrical Wiener process on a separable Hilbert space $\Xi$ and $B \in \mathcal{L}(\Xi, H)$. Let $Q := BB^* \in \mathcal{L}_1(H)$ be the covariance operator and $Q_t := \int_0^t e^{sA} Q e^{sA^*} \, \mathrm{d}s$ the associated Gramian. 

To guarantee the spatial smoothing properties of the transition semigroup, we require the stochastic convolution to satisfy a certain range inclusion, commonly known in control theory as null controllability.
\begin{assumption}[Null Controllability]
\label{ass:null_controllability}
For any $t > 0$, the inclusion $\mathrm{Im}(e^{tA}) \subseteq \mathrm{Im}(Q_t^{1/2})$ holds.
\end{assumption}

\noindent The simultaneous requirement of a trace-class covariance and null controllability necessitates a precise decay rate for the eigenvalues of $Q$. We now demonstrate that it is possible to construct a valid noise covariance operator that balances the trace-class requirement with the necessary regularizing singularity.
\begin{lemma}[Construction of the Covariance Operator]
\label{lem:explicit_covariance}
There exists a positive, trace-class operator $Q \in \mathcal{L}_1(H)$ satisfying Assumption \ref{ass:null_controllability}. Moreover, the operator $\Gamma(t) := Q_t^{-1/2} e^{tA}$ is bounded with norm $\|\Gamma(t)\|_{\mathcal{L}(H)} \le C t^{-\gamma}$ for some $\gamma \in (0, 1)$.
\end{lemma}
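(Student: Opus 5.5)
We choose $Q$ diagonal in the eigenbasis $\{e_k\}$ of $A$, namely $Q e_k := q_k e_k$ with $q_k := k^{-\alpha}$ for some $\alpha > 1$. Then $\Tr Q = \sum_k k^{-\alpha} < \infty$, so $Q \in \mathcal{L}_1(H)$ is positive and trace class, and we may take $B := Q^{1/2}$ with $\Xi := H$. Since $A$, $e^{sA}$ and $Q$ are simultaneously diagonal, the Gramian is diagonal as well:
\begin{equation*}
Q_t e_k = \left( \int_0^t e^{-2k^2 s} q_k \, \mathrm{d}s \right) e_k = \frac{q_k\,(1-e^{-2k^2 t})}{2k^2}\, e_k =: \mu_k(t)\, e_k .
\end{equation*}
Each $\mu_k(t)>0$ for $t>0$, so $Q_t^{1/2}$ is injective and $Q_t^{-1/2}$ is the (possibly unbounded) diagonal operator with entries $\mu_k(t)^{-1/2}$. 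Hence $\Gamma(t) = Q_t^{-1/2}e^{tA}$ is diagonal with entries
\begin{equation*}
\Gamma_k(t) = \frac{e^{-k^2 t}}{\sqrt{\mu_k(t)}} = \sqrt{2}\, k^{1+\alpha/2}\, \frac{e^{-k^2 t}}{\sqrt{1-e^{-2k^2 t}}} .
\end{equation*}
Assumption \ref{ass:null_controllability} and boundedness of $\Gamma(t)$ then both reduce to $\sup_k |\Gamma_k(t)| < \infty$. Indeed, a bounded $\Gamma(t)$ gives $e^{tA} = Q_t^{1/2}\Gamma(t)$, which is the range inclusion. Moreover $\|\Gamma(t)\|_{\mathcal{L}(H)} = \sup_k |\Gamma_k(t)|$.

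The key step is a uniform estimate of this supremum. Put $r := k^2 t$, so that $k^{1+\alpha/2} = r^{(2+\alpha)/4}\, t^{-(2+\alpha)/4}$ and
\begin{equation*}
|\Gamma_k(t)| = \sqrt{2}\, t^{-(2+\alpha)/4}\, h(r), \qquad h(r) := \frac{r^{(2+\alpha)/4} e^{-r}}{\sqrt{1-e^{-2r}}} .
\end{equation*}
We then check that $h$ is bounded on $(0,\infty)$. Near $r=0$ we have $1-e^{-2r}\sim 2r$, so $h(r)\sim r^{\alpha/4}/\sqrt{2}\to 0$, using $\alpha>0$. As $r\to\infty$, $h$ decays exponentially. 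Continuity on $(0,\infty)$ then gives $\sup h =: c_\alpha < \infty$. Consequently
\begin{equation*}
\|\Gamma(t)\|_{\mathcal{L}(H)} \le \sqrt{2}\, c_\alpha\, t^{-\gamma}, \qquad \gamma := \frac{2+\alpha}{4}.
\end{equation*}

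It remains to choose $\alpha$ so that both requirements hold. Trace class needs $\alpha>1$, and $\gamma<1$ needs $\alpha<2$. Any $\alpha\in(1,2)$ therefore works, giving $\gamma\in(3/4,1)$. For instance, $\alpha = 3/2$ yields $\gamma = 7/8$. This tension is the only real difficulty: a more degenerate (faster-decaying) noise makes $\Gamma(t)$ more singular. The diagonal computation shows that the window $\alpha\in(1,2)$ is nonempty. I expect no other obstacle, since everything reduces to scalar estimates thanks to the common eigenbasis.
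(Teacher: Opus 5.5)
Your proposal is correct and follows the paper's argument essentially step for step. The ingredients coincide: a diagonal $Q$ with $q_k = k^{-\alpha}$ (the paper's $k^{-2\beta}$, so $\alpha = 2\beta$), the explicit diagonal Gramian, the scaling substitution $r = k^2 t$, and the window $\alpha \in (1,2)$, which gives $\gamma = (2+\alpha)/4 \in (3/4,1)$. You also spell out two points the paper leaves implicit: the range inclusion follows from $e^{tA} = Q_t^{1/2}\Gamma(t)$, and the behaviour of $h$ near $0$ and at infinity shows that $h$ is bounded.
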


\begin{proof}
Let $Q e_k = q_k e_k$ with $q_k := k^{-2\beta}$. The trace-class condition $\Tr(Q) = \sum k^{-2\beta} < \infty$ is satisfied for $\beta > 1/2$. The Gramian $Q_t$ is diagonal with eigenvalues $\mu_{k,t} = q_k (1 - e^{-2tk^2}) / (2k^2)$. The operator $\Gamma(t)$ is bounded if and only if
\begin{equation*}
\|\Gamma(t)\|_{\mathcal{L}(H)}^2 = \sup_{k \ge 1} \frac{e^{-2tk^2}}{\mu_{k,t}} = \sup_{k \ge 1} \frac{2 k^{2\beta+2} e^{-2tk^2}}{1 - e^{-2tk^2}} < \infty.
\end{equation*}
By the substitution $y := tk^2$, the supremum is bounded by $C t^{-(\beta+1)} \sup_{y>0} [y^{\beta+1} e^{-2y} (1 - e^{-2y})^{-1}]$. The mapping $y \mapsto y^{\beta+1} e^{-2y} (1 - e^{-2y})^{-1}$ is uniformly bounded on $(0, \infty)$, yielding $\|\Gamma(t)\|_{\mathcal{L}(H)} \le \sqrt{C} t^{-\frac{\beta+1}{2}}$. The integrability condition $\gamma := (\beta+1)/2 < 1$ requires $\beta < 1$. Thus, any $\beta \in (1/2, 1)$ ensures a trace-class covariance and an integrable singularity.
\end{proof}

\noindent Consider the optimal stopping problem for \eqref{eq:heat_dynamics} under the structural conditions on $(f, \Phi, \Psi)$ defined in Assumption \ref{ass:problem_data} and the admissibility of $\Phi$ (Definition \ref{def:admissible_obstacle}). The following lemma concludes that this system fits the abstract regularity framework, yielding a precise quantitative bound on the Hölder regularity of the value function's gradient.
\begin{lemma}[Smoothing and Regularity]
\label{lem:heat_smoothing}
Let $Q$ be as in Lemma \ref{lem:explicit_covariance} with $\beta \in (1/2, 1)$. Then the transition semigroup $P_{t,s}$ satisfies Assumption \ref{ass:smoothing} with $\gamma = (\beta+1)/2 \in (3/4, 1)$. Consequently, the value function $V$ is the unique exact mild solution to the parabolic variational inequality in the sense of Definition \ref{def:mild_subsolutions} and satisfies $V(t, \cdot) \in \mathcal{C}^{1,\beta_{loc}}_{loc}(H)$ for $t \in [0, T)$, where the Hölder exponent is constrained by $0 < \beta_{loc} < 1/\gamma - 1 < 1/3$.
\end{lemma}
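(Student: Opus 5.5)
The plan is to verify Assumption \ref{ass:smoothing} for the Ornstein--Uhlenbeck transition semigroup of \eqref{eq:heat_dynamics}, and then invoke the abstract results. Since $b=0$ and $\sigma\equiv B$ is constant, the process is time-homogeneous Gaussian. Hence $P_{t,s}=R_{s-t}$ with
\[
R_\tau[\varphi](x)=\int_H \varphi(e^{\tau A}x+y)\,\mathcal N(0,Q_\tau)(\mathrm{d}y).
\]
For Assumption \ref{ass:coefficients}, note that $\sigma\equiv B$ must be Hilbert--Schmidt. This holds because $\|B\|_{\mathcal L_2}^2=\Tr(Q)<\infty$ by Lemma \ref{lem:explicit_covariance}.

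Under the null controllability of Assumption \ref{ass:null_controllability}, the Cameron--Martin theorem yields the standard Bismut--Elworthy type formulas (Da Prato--Zabczyk). For $h,k\in H$,
\begin{align*}
\langle\nabla R_\tau[\varphi](x),h\rangle_H&=\int_H\varphi(e^{\tau A}x+y)\,\langle \Gamma(\tau)h,Q_\tau^{-1/2}y\rangle_H\,\mathcal N(0,Q_\tau)(\mathrm{d}y),\\
\langle\nabla^2R_\tau[\varphi](x)h,k\rangle_H&=\int_H\varphi(e^{\tau A}x+y)\Big(\langle \Gamma(\tau)h,Q_\tau^{-1/2}y\rangle_H\langle\Gamma(\tau)k,Q_\tau^{-1/2}y\rangle_H-\langle\Gamma(\tau)h,\Gamma(\tau)k\rangle_H\Big)\,\mathcal N(0,Q_\tau)(\mathrm{d}y).
\end{align*}
These are first established for bounded continuous $\varphi$ by differentiating the Gaussian density shift. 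They are then extended to $\varphi\in\mathcal B_m(H)$ by truncation, using Gaussian integrability of polynomial weights.

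Each integral is then bounded by Cauchy--Schwarz. The function $y\mapsto\langle\Gamma h,Q_\tau^{-1/2}y\rangle$ is a centered Gaussian white-noise functional with variance $\|\Gamma(\tau)h\|^2$, so its $L^2$ and $L^4$ norms are controlled by $\|\Gamma(\tau)h\|$. It remains to control the weight factor:
\[
\Big(\int w_m(e^{\tau A}x+y)^2\,\mathcal N(0,Q_\tau)(\mathrm{d}y)\Big)^{1/2}\le C\,w_m(x).
\]
This follows from $\|e^{\tau A}\|\le1$ and $\sup_{\tau\le T}\int\|y\|^{2m}\,\mathcal N(0,Q_\tau)(\mathrm{d}y)<\infty$, since $\Tr Q_\tau\le T\Tr Q$. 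Combining these with Lemma \ref{lem:explicit_covariance}, $\|\Gamma(\tau)\|\le C\tau^{-\gamma}$ with $\gamma=(\beta+1)/2$, gives \eqref{eq:grad1_bound} with $\tau^{-\gamma}$ and \eqref{eq:grad2_bound} with $\tau^{-2\gamma}$. Part (i) of Assumption \ref{ass:smoothing}, continuity of the derivatives, follows from dominated convergence in the formulas together with continuity of $x\mapsto e^{\tau A}x$.

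The \textbf{main obstacle} is the rigorous justification of these formulas for unbounded, merely measurable $\varphi$, and the continuity of $\nabla^2$ in $x$. I would first approximate $\varphi$ by truncations $\varphi_n=(\varphi\wedge n)\vee(-n)$. I would then show locally uniform convergence of the right-hand sides using the uniform Gaussian moment bounds, and conclude by the standard closure theorem for derivatives.

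With the assumption verified, the conclusion follows from earlier results. Theorems \ref{thm:convergence_exact} and \ref{thm:optimal_strategy}, using admissibility of $\Phi$, identify $V$ as the value function and as the unique exact mild solution. Theorem \ref{thm:frechet_ident} gives $V(t,\cdot)\in\mathcal C^{1,\beta_{loc}}_{loc}(H)$ whenever $\gamma(1+\beta_{loc})<1$, that is, $\beta_{loc}<1/\gamma-1$. Since $\gamma>3/4$, we get $1/\gamma-1<1/3$, which yields the stated range.
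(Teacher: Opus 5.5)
Your proposal is correct and follows the paper's route. The paper cites \cite{DPZa14} (Theorem 9.26) for the null-controllability smoothing estimates and \cite{Ce95} (Theorem 4.2) for the extension to polynomially weighted functions; you derive these by hand through the Gaussian-shift formulas, Cauchy--Schwarz and Gaussian moments, and then invoke Theorems \ref{thm:convergence_exact}, \ref{thm:optimal_strategy} and \ref{thm:frechet_ident}. One small fix: truncations of a merely measurable $\varphi$ are not continuous, so your base case must cover bounded measurable functions, and the continuity of $\nabla^2 R_\tau[\varphi]$ must then come from the Cameron--Martin density $\rho_\tau(x,\cdot)$ of $\mathcal N(e^{\tau A}x,Q_\tau)$ with respect to $\mathcal N(0,Q_\tau)$, via continuity of $x\mapsto\rho_\tau(x,\cdot)$ in $L^2(\mathcal N(0,Q_\tau))$, rather than from dominated convergence in $x$ (alternatively, restrict to $\mathcal C_m(H)$, which is all the paper claims and uses).
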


\begin{proof}
By Assumption \ref{ass:null_controllability} we can apply \cite[Theorem 9.26]{DPZa14}, deducing that Assumption \ref{ass:smoothing} holds in the case of bounded continuous functions. The extension of this result to the weighted polynomial space $\mathcal{C}_m(H)$ can be done as in \cite[Theorem 4.2]{Ce95}. Hence, Assumption \ref{ass:smoothing} holds with $\gamma = (\beta+1)/2$. The identification of $V$ and its regularity follow directly from Theorems \ref{thm:optimal_strategy} and \ref{thm:frechet_ident}.
\end{proof}

\bibliographystyle{plain}
\bibliography{bigbib}
\end{document}